\newcommand{\bn}{\ensuremath{\mathbf{n}}}
\newcommand{\bm}{\ensuremath{\mathbf{m}}}
\newcommand{\bx}{\ensuremath{\mathbf{x}}}
\newcommand{\bD}{\ensuremath{\mathbf{D}}}
\newcommand{\bh}{\ensuremath{\mathbf{h}}}
\newcommand{\vp}{\ensuremath{\varphi}}
\newcommand{\bR}{\ensuremath{\mathbb{R}}}
\newcommand{\pa}{\ensuremath{\partial}}
\newcommand{\e}{\ensuremath{\varepsilon}}
\newcommand{\mc}{\ensuremath{\mathcal}}
\newcommand{\omm}{\ensuremath{\Omega}}
\newcommand{\nb}{\ensuremath{\nabla}}
\newcommand{\fr}{\ensuremath{\frac}}
\newcommand{\nt}{\ensuremath{\negthickspace}}
\newcommand{\Dn}{\ensuremath{\bD_{\bn}}}
\newcommand{\TT}{\ensuremath{\mathbb{T}^2}}
\newcommand{\ttheta}{\ensuremath{\tilde{\theta}}}
\newcommand{\htheta}{\ensuremath{\hat{\theta}}}
\DeclareMathOperator{\arsinh}{arsinh}
\newtheorem{theorem}{Theorem}[section]
\newtheorem{lemma}{Lemma}[section]
\newtheorem{remark}{Remark}[section]
\newtheorem{proof}{Proof}[section]
\newtheorem{proposition}{Proposition}[section]
\begin{document}

\pagestyle{plain}\title{\textbf{Sawtooth profile in Smectic A liquid crystals}}


\author{Carlos J. Garc{\'\i}a-Cervera\thanks
         {Mathematics Department,
         University of California,
         Santa Barbara, CA 93106
         (cgarcia@math.ucsb.edu).
         This author's research was supported by NSF grants DMS-0645766.}
        \and
        Tiziana Giorgi\thanks
          {Department of Mathematical Sciences,
           New Mexico State University,
           Las Cruces, NM 88001
           (tgiorgi@nmsu.edu).
          Funding to this author was provided by the National Science
Foundation Grant \#DMS-1108992 }
        \and
        Sookyung Joo\thanks
         {Department of Mathematics and Statistics,
          Old Dominion University
         Norfolk, VA 23529
         (sjoo@odu.edu).
         This author was supported by the NSF-AWM Mentoring Travel Grant and ODU SRFP grant.}}

\maketitle

\begin{abstract}
{\sl
We study the de Gennes free energy for smectic A liquid crystals over $\mathbb{S}^2$-valued vector fields to understand the chevron (zigzag) pattern formed  in the presence of an applied magnetic field. We identify a small dimensionless parameter $\e$, and investigate the behaviors of the minimizers  when the field strength is of order $\mathcal{O}(\e^{-1})$. In this regime, we show  via $\Gamma$-convergence that a chevron structure where the director connects two minimum states of the sphere is favored. We also analyze the Chen-Lubensky free energy, which includes the second order gradient of the smectic order parameter, and obtain the same general behavior as for the de Gennes case. Numerical simulations illustrating the chevron structures for both energies are included.
  }
\end{abstract}

\pagestyle{myheadings}
\thispagestyle{plain}
\markboth{C. J. Garc{\'\i}a-Cervera AND T. Giorgi AND S. Joo}{ Sawtooth profile in smectic A liquid crystals}

\section{Introduction}
The rod-like molecules of a liquid crystal in the smectic~A phase tend to align with each other, and arrange themselves into equally spaced layers, perpendicular to the principal molecular axis. If the liquid crystal sample is confined between two flat plates, and its molecules are uniformly aligned so that the smectic layers are parallel to the bounding plates, a magnetic field applied in the direction parallel to the layer will tend to reorient the molecules and the layers, while the surface anchoring condition at the plates will opposite this reorientation. Hence, an instability will occur above a threshold magnetic field, called Helfrich-Hurault effect \cite{He, Hu}, where layer undulation will appear. As the applied field increases well above this first critical value, the sinusoidal shape of the smectic layer will change into a chevron (zigzag) pattern with a longer period.
The Helfrich-Hurault effect has been analyzed by Garc{\'\i}a-Cervera and Joo in \cite{G-J1, G-J2, G-J3}, where the periodic oscillations of the smectic layers and molecular alignments were described at the onset of the undulation, and the critical magnetic field was estimated in terms of the material constants and sample thickness. In this paper, we are interested in the higher field regimes,  in particular in the description and derivation of the chevron profile.

Experimental studies of the development  of the chevron pattern from the sinusoidal shape of undulation were presented by Lavrentovich {\it et al.} in \cite{I-L, S-S-L}. They also proposed a model with weak anchoring conditions for sinusoidal and sawtooth undulation profiles. By equating the director and layer normal, making an ansatz of periodic undulations, and assuming the square lattice for undulations, they reduced the problem to one dimension, obtained an ordinary differential equation, and found the explicit solution to the equation. To rigorously study the zigzag pattern in full generality, we analyze via $\Gamma$-convergence a two-dimensional de Gennes energy functional, without identifying the director with the layer normal.

The de Gennes free energy density includes nematic, smectic~A and magnetostatic contributions. A nondimensionalization procedure leads to the identification of a small parameter $\e$. In \cite{G-J2}, the authors show that the critical field of the undulation phenomenom is of order $\mathcal{O}(1)$. More precisely, they obtain estimates of  $\pi$ and $1$, with and without the assumption that the layers are fixed at the bounding plates, respectively.  In this work, we consider regimes where the field strength is of order $\mathcal{O}(\e^{-1})$.

The mathematical analysis we adopt for the two dimensional de Gennes free energy is motivated by the study of domain walls in ferromagnetism. By reformulating the free energy, we capture a double well potential having two minimum states for the director on the sphere, hence we follow \cite{Anzellotti}  and use a Modica-Mortola-type inequality on the sphere equipped with a new metric associated with the double well potential. Additionally, since experiments show periodic chevron patterns, we consider a system with periodic boundary conditions, and adapt to our problem for $\mathbb{S}^2$-valued vector fields, the variational approach on the flat torus presented in \cite{Choksi-Sternberg}, where the authors consider the Cahn-Hilliard energy in the periodic setting in order to study microphase separation of diblock copolymers.  It's important to notice that while extending the techniques in \cite{Anzellotti}  to the flat torus, we need to consider the presence of the smetic order parameter, and work with an explicit form of a geodesic curve connecting the two minima in the new metric.

We also consider the model introduced by Chen and Lubensky in 1976 \cite{C-L}, which is based on the de Gennes model for
smectic~A, but includes a second order gradient term for the smectic order parameter. In \cite{C-L}, the authors investigate the nematic to smectic~A or smectic~C phase transition, and use this model to predict the twist grain boundary phase in chiral smectic liquid crystals \cite{Renn-Lubensky}. However, it is well-known that their model lacks coercivity of the energy, therefore in here we consider its modification as presented in (\cite{Luk, J-P}). Although the $\Gamma$-convergence analysis for the two dimensional de Gennes energy in the flat torus setting can be applied to the two dimensional Chen-Lubensky energy (see Remark~\ref{Chen-Lub_remark}), we present also a $\Gamma$-convergence result for the one dimensional Chen-Lubensky energy on the interval with periodic boundary condition, not on $\mathbb{S}^1$, which we believe is of mathematical interest, as in this setting an additional boundary integral term is present in the $\Gamma$-limit.

Numerical simulations are carried out to illustrate the sawtooth profiles of undulations by solving the gradient flow equations in three dimensional space. The molecular alignment and layer structure at the cross section of the body confirm our mathematical analysis. The numerics also show that the evolution from the sinusoidal perturbation at the onset of undulations to the chevron pattern occurs with an increase of the wavelength. Numerical methods developed in \cite{G-J3}  for the Chen-Lubensky functional are employed for our problem.

Our mathematical results and numerical experiments are consistent with the experimental picture presented by Lavrentovich {\it et al.} in \cite{I-L, S-S-L}.

Chevron formation is also observed  in a surface-stabilized liquid crystal cell cooled from the smectic~A to the smectic~C phase, an interesting analytic variational characterization of this phenomenon can be found in \cite{Lei}.

The paper is organized as follows. In section \ref{section:deGennes}, we introduce the de Gennes model and the scaling regime of our problem. Then we obtain the $\Gamma$-convergence result in two dimensions for the flat torus. In section \ref{section:CL}, we prove the results for the Chen-Lubensky model. Numerical simulations for the de Gennes and Chen-Lubensky models are presented in section \ref{section:numerics}. Detailed analyses of the geodesics used in the $\Gamma$-limit analysis, and of the double well profile for the Chen-Lubensky energy are provided in appendix~\ref{geodesics}.

\section{The de Gennes model} \label{section:deGennes}
We consider the complex de Gennes free energy to study the chevron structure of smectic A liquid crystals due to the presence of a magnetic field. The smectic state is described by a unit vector $\bn$ and a complex order parameter $\psi$. The unit vector field $\bn$, or director field, represents the average direction of molecular alignment. The smectic order parameter is written as $\psi(\bx) = \rho(\bx) e^{i q \omega(\bx)}$, where $\omega$ parametrizes the layer structure so that $\nb \omega$ is perpendicular to the layer. The smectic layer density $\rho$ measures the mass density of the layers.

According to the de Gennes model, the free energy is given by
\begin{eqnarray} \label{mcG}
&&\mc{G} (\psi, \bn) =  \int_{\omm} \Big(C |\nb \psi - i q \bn \psi |^2 + K |\nb \bn|^2 + \frac{g}{2} \Big( |\psi|^2 - \frac{r}{g} \Big)^2 \nonumber \\
&&\qquad \qquad \qquad - \chi_a  H^2 (\bn \cdot \bh)^2 \Big) \, d \bx,
\end{eqnarray}
where the material parameters $C, K, g, $ and temperature dependent parameter $r = T_{NA}-T$ are fixed positive constants. The last term in (\ref{mcG}) is the magnetic free energy density,  $\bh$ is a unit vector representing the direction of the magnetic field, and $H^2$ is the strength of the applied field.  We consider a rectangular sample, $\omm= (-L, L)^2 \times (-d, d)$.

Since we are interested in the development of the layers when these are well defined, we take $ \rho = \frac{r}{g}$, with this assumption  the energy (\ref{mcG}) becomes
\begin{equation*}
\int_{\omm} \left(Cq^2 |\nb \omega - \bn |^2 + K |\nb \bn|^2  - \chi_a  H^2 (\bn \cdot \bh)^2 \right) \, d \bx.
\end{equation*}
We consider the change of variables $\bar{\bx} = \bx/d$,  and obtain the following  nondimensionalized energy
\begin{equation} \label{energy-numerics}
\mc{G} (\vp, \bn) = \frac{d K}{\e} \int_{\tilde{\omm}} \left( \frac{1}{\e} |\nb \vp -\bn |^2 + \e |\nb \bn|^2 - \tau (\bn \cdot \bh)^2 \right) \, d\bar{\bx},
\end{equation}
where
\[
\vp = \frac{\omega}{d}, \qquad \e = \frac{\lambda}{d}, \qquad \lambda =  \sqrt{\frac{K }{Cq^2 }}, \qquad \tau = \frac{\chi_a H^2 d^2 \e}{K},
\]
and
\[
\tilde{\omm} = (-l,l )^2 \times (-1,1 ), \qquad l = \frac{L}{d}.
\]
The dimensionless parameter $\e$ is in fact the ratio of the layer thickness to the sample thickness and thus $\e \ll 1$. The values $d = 1 mm$ and $\lambda = 20 \AA$ are employed in \cite{dG-P}. This small parameter $\e$ is also used in \cite{G-J2}, where the authors investigate the first instability of $\mc{G}$, and find the critical field, $\tau_c$, at which undulations appear to be $\tau_c = \mathcal{O}(1)$. In this paper we are interested in the layer and director configurations for $\tau = \mathcal{O}( \e^{-1} )$. Therefore, we set $\sigma = \tau \e$ and treat $\sigma$ as a constant.

We study the layer structure in the cross section of the sample, ($z=0$), so that the problem is reduced to a two dimensional case. Thus, we assume that
 $\bn = (n_1, n_2, n_3)$, where $\bn = \bn(x,y)$,  and the magnetic field is applied in the $x$-direction, $\bh = \mathbf{e}_1$. We rewrite the magnetic energy density as
\[
-\frac{\sigma}{\e} \bn_1^2 =  -\frac{\sigma}{\e}  + \frac{\sigma}{\e} (n_2^2 + n_3^2),
\]
and set $\vp = z-g(x,y)$ with $g$ being the layer displacement from the flat position. Up to multiplicative and additive constants, and dropping the bar notation, the energy can then be written as
\begin{equation*}
 \int_{\omm}  \left[ \e |\nb \bn|^2 +\frac{1}{ \e} W(\bn) + \frac{1}{\e} (g_x+n_1)^2+ \frac{1}{\e} (g_y+n_2)^2  \right] \, dx\ dy,
\end{equation*}
where $\omm=(-l, l)^2$, and
\begin{equation}\label{potential}
W(\bn ) =  \sigma n_2^2  +\frac{1}{A} (n_3-A)^2,
\end{equation}
with $A= (1+ \sigma)^{-1} < 1 $ .

We denote the zeros of $W : \mathbb{S}^2 \to [0, \infty)$ by
\begin{equation} \label{zero_W}
\bn^{\pm} = (\pm \bar{n}_1, 0, A),
\end{equation}
where $\bar{n}_1 = \sqrt{1-A^2}$, and  write $\alpha = \arccos(A)$.

To incorporate the periodic boundary conditions in our mathematical framework, we consider a two dimensional flat torus $\mathbb{T}^2 = \mathbb{R}^2 / (2l \mathbb{Z})^2$, that is the square $[-l,l)^2$ with periodic boundary conditions. More detailed definitions on the Sobolev space, BV spaces, and finite perimeters on $\TT$ can be found in \cite{Choksi-Sternberg}.

In conclusion, to study the chevron structure of the layer, we consider the energy functional:
\begin{equation}\label{Fe}
F_{\e} (\bn,g) = \int_{\mathbb{T}^2 }  \left[ \e |\nb \bn|^2 +\frac{1}{ \e} W(\bn) + \frac{1}{\e} (g_x+n_1)^2+ \frac{1}{\e} (g_y+n_2)^2  \right] \, dx\, dy.
\end{equation}
We analyze the configuration of the minimizers of $F_{\e}$ using $\Gamma$-convergence \cite{maso1993, Braides_book}. In particular, we use the following characterization of the $\Gamma$-limit, \cite{maso1993}:

Let $(X,\cal T)$ be a topological space, and
  ${G_h}$ be a family of functionals parameterized by $h$. A functional
  $G_0$ is the $\Gamma$-limit of ${G_h}$ as $h\to 0$ in $\cal T$ iff
  the two following conditions are satisfied:
\begin{description}
  \item [(i)] If $u_h \to u_0$ in $\cal T$, then $\liminf_{h\to 0}
    G_h(u_h) \ge G_0(u_0)$.
  \item [(ii)] For all $u_0 \in X$, there exists a sequence ${u_h}\in X$
    such that $u_h\to u_0$ in $\cal T$, and $\lim _{h\to 0} G_h(u_h) =
    G_0(u_0)$.
\end{description}
Condition $(i)$ is related to lower-semicontinuity, while to verify condition $(ii)$ a specific construction for the converging sequence is typically required.

In our setup, we will use the sets
\begin{eqnarray*}
\nt \nt \mathcal{Y} \nt &=&\nt \mathbf{W}^{1,2}(\TT, \mathbb{S}^2) \times W^{1,2}(\TT) \label{Y_3}, \\
\nt \nt \mathcal{A} \nt &=&\nt \Big \{ (\bn, g) \in \mathbf{BV}(\TT, \{\bn^{\pm} \}) \times W^{1,2}(\TT): g_x \in BV(\TT, \{\pm \bar{n}_1 \} ),  \nonumber\\
                        && \qquad    g = g(x), \,\bn = \bn(x), \, g_x+n_1 =0\;  a.e. , \, n_2=0 \; a.e., \; \int_{\TT} n_1 =0\Big \}, \label{A_3}
\end{eqnarray*}
and the following functionals, $G_{\e}$ and $G_0$, defined on $X=L^1(\TT , \mathbb{S}^2) \times L^2(\TT)$:
\begin{equation} \label{G_e}
G_{\e} (\bn, g) :=
    \begin{cases}
         F_{\e} (\bn, g)  &\mbox{if } (\bn, g) \in  \mathcal{Y}  ,\\
         +\infty &\mbox{else},
    \end{cases}
\end{equation}
and
\begin{equation} \label{G_0}
G_0 (\bn, g) :=
    \begin{cases}
          2 c_0 P_{\TT}(A_{\bn^-} ) \quad &\mbox{if } (\bn, g)   \in \mc{A} . \\
         +\infty &\mbox{else}.
    \end{cases}
\end{equation}
In the above,
 $P_{\TT}(A_{\bn^-} )$ is the perimeter, defined as in \cite{giusti1984minimal, Choksi-Sternberg}, of the  set
\begin{equation}\label{An}
A_{\bn^-} = \{(x,y) \in \TT : \bn(x) = \bn^- \},
\end{equation}
while
 \begin{equation*} \label{geodesic}
c_0 = \inf \Big \{ \int_0^1 \sqrt{W(\gamma(t))} |\gamma'(t)| dt: \gamma \in C^1([0,1], \mathbb{S}^2), \gamma(0) = \bn^-, \gamma(1) = \bn^+ \Big \}.
\end{equation*}
The value of $c_0$, as mentioned in \cite{Moser2009}, can be computed and it is given by
\begin{equation} \label{c_0}
c_0 = \frac{2}{\sqrt{A}} (\sin \alpha - \alpha \cos \alpha).
\end{equation}
For the convenience of the reader a sketch of a proof of (\ref{c_0}) is provided in appendix~\ref{geodesics} at the end of this paper.

A $\Gamma$-convergence result is always paired with a compactness property, to ensure that every cluster point of a sequence of minimizers for $G_\e$ is a minimizer of $G_0$.

\begin{proposition}  \it{(Compactness)}\label{compactness}
Let the sequences $\{\e_j\}_{j \uparrow \infty} \subset (0, \infty)$,  and $ \{ (\bn_j , g_j)\}_{j \uparrow \infty}\subset\mc{Y}$ be such that
\[
\e_j \to 0, \qquad \mbox { and } \qquad \{ G_{\e_j}(\bn_j, g_j)\}_{j \uparrow \infty} \qquad \mbox{is bounded.}
\]
Then
there exist a subsequence $\{ (\bn_{j_k}, g_{j_k}) \}$ and $(\bn, g) \in \mathcal{A}$ such that
\[
\bn_{j_k} \to \bn \mbox{ in } L^1(\TT , \mathbb{S}^2) \qquad \mbox{ and } \qquad g_{j_k} - \frac{1}{4l^2} \int_{\omm} g_{j_k} dx dy \to g \mbox{ in } L^2(\TT).
\]
\end{proposition}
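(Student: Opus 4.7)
The plan is to use the bounded energy hypothesis to extract four independent bounds, then apply a Modica--Mortola--Anzellotti argument on the sphere endowed with the degenerate conformal metric $\sqrt{W}\,|\cdot|$ to get $L^1$ compactness for $\bn_j$, a Poincar\'e argument on the torus for $g_j$, and finally pass to the limit in the pointwise constraints to verify $(\bn,g)\in\mathcal{A}$.

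First, from $G_{\e_j}(\bn_j,g_j)\le C$ I split the bound as $\int_{\TT}\e_j|\nb\bn_j|^2\le C$, $\int_{\TT}W(\bn_j)\le C\e_j$, $\|g_{j,x}+n_{j,1}\|_{L^2}^2\le C\e_j$, $\|g_{j,y}+n_{j,2}\|_{L^2}^2\le C\e_j$. The Young inequality $\e|\nb\bn|^2+W(\bn)/\e\ge 2\sqrt{W(\bn)}\,|\nb\bn|$ gives $\int_{\TT}\sqrt{W(\bn_j)}\,|\nb\bn_j|\le C$. To turn this into $L^1$-compactness I would introduce $\Phi:\mathbb{S}^2\to\mathbb{R}$ defined as the geodesic distance from $\bn^-$ in the metric $\sqrt{W(p)}\,g_{\mathbb{S}^2}$, using the explicit minimizer constructed in Appendix~\ref{geodesics}. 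Then $\Phi$ is Lipschitz, $|\nb_{T_p\mathbb{S}^2}\Phi|\le\sqrt{W(p)}$ a.e., and $\Phi(\bn^-)=0$, $\Phi(\bn^+)=c_0$. By the chain rule, $|\nb(\Phi\circ\bn_j)|\le\sqrt{W(\bn_j)}\,|\nb\bn_j|$ in $L^1$, so $\{\Phi\circ\bn_j\}$ is bounded in $BV(\TT)\cap L^\infty$. BV-compactness on the torus (as in \cite{Choksi-Sternberg}) yields a subsequence converging in $L^1$ and a.e. to some $\phi$. Since $W(\bn_j)\to 0$ in $L^1$, a further subsequence satisfies $\bn_{j_k}\to\bn$ a.e. with $\bn(x)\in\{\bn^\pm\}$ a.e.; because $\Phi$ separates $\bn^-$ and $\bn^+$, the relation $\phi=\Phi\circ\bn$ identifies $\bn\in BV(\TT,\{\bn^\pm\})$, and bounded convergence promotes the a.e. convergence to $L^1(\TT,\mathbb{S}^2)$.

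Second, since $|n_{j,1}|,|n_{j,2}|\le 1$ and $g_{j,x}=-n_{j,1}+O(\sqrt{\e_j})$ in $L^2$ (similarly for $g_{j,y}$), $\{\nb g_j\}$ is bounded in $L^2(\TT)$. Setting $\tilde g_j:=g_j-\frac{1}{4l^2}\int_{\TT}g_j$, Poincar\'e on the torus yields a uniform $W^{1,2}$-bound, hence a subsequence $\tilde g_{j_k}\to g$ strongly in $L^2$ and weakly in $W^{1,2}$. To verify $(\bn,g)\in\mathcal{A}$: $\bn\in\{\bn^\pm\}$ forces $n_2=0$ a.e.; the weak-$L^2$ limit of $g_{j,y}+n_{j,y}=o(1)$ gives $g_y=0$, so $g=g(x)$; similarly $g_x+n_1=0$ a.e., forcing $n_1=-g_x$ to depend only on $x$, and together with $n_2=0$, $n_3=A$, this gives $\bn=\bn(x)$ with values in $\{\bn^\pm\}$; the $BV$-regularity of $g_x$ and $g_x\in\{\pm\bar n_1\}$ follow directly from $n_1\in BV(\TT,\{\pm\bar n_1\})$; finally, periodicity of $g$ on $\TT$ gives $\int_{\TT}g_x=0$ and hence $\int_{\TT}n_1=0$.

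The main obstacle is the Anzellotti step: verifying that the distance function $\Phi$ on $\mathbb{S}^2$ associated to the degenerate metric $\sqrt{W}\,g_{\mathbb{S}^2}$ is Lipschitz on $\mathbb{S}^2$ with $|\nb_{T_p\mathbb{S}^2}\Phi|\le\sqrt{W(p)}$ and attains the value $c_0$ at $\bn^+$, so that the chain-rule estimate legitimately transfers the Modica--Mortola bound to a BV-bound on $\Phi\circ\bn_j$. Because $W$ vanishes only at the two isolated points $\bn^\pm$ and the minimizing geodesic together with the value $c_0$ are given explicitly in Appendix~\ref{geodesics}, this is technical but essentially standard; the remaining steps are routine functional-analytic passages to the limit.
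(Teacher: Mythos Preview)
Your proposal is correct and follows essentially the same route as the paper: both arguments use the Anzellotti distance function $\Phi(\xi)=d(\bn^-,\xi)$ for the degenerate metric $\sqrt{W}\,g_{\mathbb{S}^2}$, the Modica--Mortola inequality to get a uniform $BV$ bound on $\Phi\circ\bn_j$, BV-compactness on $\TT$ to extract the $L^1$-convergent subsequence for $\bn_j$, and then the Poincar\'e inequality on the torus combined with the constraint terms to handle $g_j$ and verify membership in $\mathcal{A}$. The only cosmetic difference is that the paper reads off $n_{j,2}\to 0$ and $n_{j,3}\to A$ in $L^2$ directly from the explicit form of $W$ at the outset, whereas you deduce $n_2=0$ after identifying the limit $\bn\in\{\bn^\pm\}$; both orderings work.
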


\begin{proof}
The uniform bound $G_{\e_j} (\theta_j, g_j) \leq M$ and $(\bn_j, g_j) \in \mathcal{Y}$ give
\begin{equation} \label{control-n}
n_{j,2} \to 0 \qquad \mbox{ and } \qquad n_{j,3} \to A \quad \mbox{ in } \quad L^2(\TT),
\end{equation}
which lead to
$
\displaystyle{|n_{j,1}| \to \bar{n}_1  \mbox{ in } L^1(\TT),}
$
where $\bar{n}_1 = \sqrt{1-A^2}$. We define, for $\xi, \eta \in \mathbb{S}^2$:
\begin{eqnarray}\label{distance}
&& d(\xi,\eta)=\inf \bigg \{  \int_0^1 \, \sqrt{W(\gamma(t))} \,\, |\gamma'(t)| \, dt;  \mbox{ for } \gamma \in C^1([0,1]),  \nonumber \\
&&  \qquad \qquad  \qquad \gamma(t)\in \mathbb{S}^2 \mbox{ such that } \gamma(0)=\xi, \gamma(1)=\eta   \bigg \},
\end{eqnarray}
and
\begin{eqnarray}\label{Phi}
&&\Phi(\xi) = d({\bf n}_-,\xi).
\end{eqnarray}
Note that in this notation we have $c_0= d({\bf n}_-,{\bf n}_+)= \Phi({\bf n}_+)$.

Known results imply that  the function $d(\xi,\eta)$ is a distance on $\mathbb{S}^2$ associated to the degenerate Riemann metric defined by $\sqrt{W}$, and $\Phi$ is Lipschitz continuous with respect to the Euclidean distance, see \cite{Anzellotti}.

We define $w_j = \Phi(\bn_j)$, and apply the classical Modica-Mortola argument to derive
\begin{eqnarray} \label{cpt}
 \liminf_{j \to \infty} G_{\e_j} (\bn_j, g_j) &\geq& 2 \liminf_{j \to \infty} \int_{\TT} \sqrt{W(\bn_j)} |\nb \bn_j|  \nonumber \\
     &  \geq & 2 \liminf_{j \to \infty} \int_{\TT} |D (\Phi(\bn_j))| \equiv
 2 \liminf_{j \to \infty} \int_{\TT} |D w_j|,
\end{eqnarray}
where the second inequality of (\ref{cpt}) is a consequence of  Lemma~4.2 in \cite{Anzellotti}.
From this we have that  the $w_j$ are uniformly bounded in $W^{1,1}(\TT)$, and therefore there exists $w_0 \in BV(\TT)$ such that (up to subsequences) $w_j \to w_0$ in $L^1(\TT)$ and $a.e.$ in $\TT$. If we define
\[
\bn = \bn^- \chi_S + \bn^+ \chi_{\TT \setminus S},
\]
where $S = \{ \bx \in \TT : w_0(\bx) = 0 \}$,
following \cite{Modica1987,Modica-Mortola} and Proposition~4.1 in \cite{Baldo90}, we can then show that there is a subsequence $\bn_{j_k}$ such that
\[
\bn_{j_k} \to \bn \quad \mbox{in} \quad L^1(\TT , \mathbb{S}^2) \qquad \mbox{and} \qquad \bn \in BV(\TT, \bn^{\pm}).
\]
We next look at the layer displacement $g$,   the bound on the energy gives
\begin{equation} \label{control-g}
\int_{\TT} \left[ ((g_j)_x+ (n_j)_1)^2 + ((g_j)_y + (n_j)_2)^2 \right] \leq M \e,
\end{equation}
that is  $\| \nb g_j \|_2 \leq C$, so we can find a subsequence $\{g_{j_k} \}$ and a $g$ for which
\[
g_{j_k} - \frac{1}{4l^2} \int_{\omm} g_{j_k} dx dy  \rightharpoonup g \, \mbox{ in } \, H^1(\TT),
\]
and
\[
 \quad g_{j_k} - \frac{1}{4l^2} \int_{\omm} g_{j_k} dx dy  \to g \, \mbox{ in } \, L^2(\TT).
\]
Additionally, from (\ref{control-n}) and (\ref{control-g}), we have $g (x,y)= g(x)$ $a.e.$, and  since
\[
\int_{\TT} (g_x + n_1)^2 \leq \liminf_{j \to \infty} \int_{\TT} ( (g_j)_x + (n_j)_1)^2 = 0,
\]
we obtain $n_1(x,y) = -g'(x)$ $a.e.$ in $\TT$, and $\displaystyle{\int_{\TT} n_1 = - \int_{\TT} g_x = 0,}$
by the periodicity of $g$. This implies
$
\bn(x,y) = \bn(x)
$
$a.e.$, hence $(\bn, g) \in \mc{A}$.
\end{proof}

The proof of the following lower bound inequality follows directly from the proof of (step 1) of Theorem~2.4 in \cite{Anzellotti}.
\begin{lemma} \it{(Lower semi-continuity)} \label{thm:lower}
For every $(\bn, g) \in L^1(\TT , \mathbb{S}^2) \times L^2(\TT)$, and every sequence $(\bn_j, g_j) \in \mathcal{Y} $ such that $(\bn_j, g_j)$ converges to $(\bn, g) $ in $L^1(\TT)\times L^2(\TT)$, there holds
\[
       \liminf_{j \to \infty} G_{\e_j} (\bn_j, g_j) \geq G_0(\bn, g),
\]
and
$
(\bn, g)  \in \mc{A}.
$
\end{lemma}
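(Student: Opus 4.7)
The plan is to follow the standard Modica--Mortola scheme adapted to the degenerate Riemannian metric $\sqrt{W}$ on $\mathbb{S}^2$, exactly as in step~1 of Theorem~2.4 of \cite{Anzellotti}, with the small modification needed to accommodate the smectic terms involving $g$. First I would assume, without loss of generality, that $\liminf_{j\to\infty} G_{\e_j}(\bn_j, g_j) < \infty$; otherwise the inequality is trivial. Passing to a subsequence achieving the liminf, and applying Proposition~\ref{compactness} (after subtracting from each $g_j$ its mean, which does not affect $G_{\e_j}$ since only $\nabla g_j$ appears in $F_{\e_j}$, and after observing that the $L^2$ limit is unchanged by this constant shift), I obtain that $(\bn, g) \in \mathcal{A}$. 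In particular this verifies the second assertion of the lemma, and I may now assume $(\bn,g)\in\mathcal{A}$ for the energy estimate.

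Next I would discard the two non-negative smectic terms $\frac{1}{\e}(g_x+n_1)^2 + \frac{1}{\e}(g_y+n_2)^2$ and concentrate on the remaining part of the functional. The classical Cauchy--Young inequality gives, pointwise,
\[
\e |\nabla \bn_j|^2 + \frac{1}{\e} W(\bn_j) \;\geq\; 2\sqrt{W(\bn_j)}\,|\nabla \bn_j|,
\]
so
\[
G_{\e_j}(\bn_j, g_j) \;\geq\; 2\int_{\TT} \sqrt{W(\bn_j)}\,|\nabla \bn_j|\,dx\,dy.
\]
With $\Phi(\xi) = d(\bn^-, \xi)$ and $w_j = \Phi(\bn_j)$ as in the proof of Proposition~\ref{compactness}, Lemma~4.2 of \cite{Anzellotti} yields $\sqrt{W(\bn_j)}\,|\nabla \bn_j| \geq |\nabla w_j|$ almost everywhere, since $\Phi$ is Lipschitz with respect to the degenerate metric induced by $\sqrt{W}$.

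Finally I would pass to the limit. The sequence $w_j$ is uniformly bounded in $W^{1,1}(\TT)$, and by the compactness argument $w_j \to w_0$ in $L^1(\TT)$, where $w_0 = c_0\,\chi_{\TT\setminus A_{\bn^-}}$ since $\Phi(\bn^-)=0$ and $\Phi(\bn^+)=c_0$. Lower semi-continuity of the total variation on $BV(\TT)$ then gives
\[
\liminf_{j\to\infty}\int_{\TT} |\nabla w_j|\,dx\,dy \;\geq\; \int_{\TT} |D w_0| \;=\; c_0\, P_{\TT}(A_{\bn^-}),
\]
where the last equality uses that $w_0$ is a two-valued BV function whose jump set is precisely the reduced boundary of $A_{\bn^-}$ with jump magnitude $c_0$. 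Combining the estimates produces
\[
\liminf_{j\to\infty} G_{\e_j}(\bn_j, g_j) \;\geq\; 2 c_0\, P_{\TT}(A_{\bn^-}) \;=\; G_0(\bn, g),
\]
which is the desired inequality.

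The main technical obstacle is not the Modica--Mortola step itself, which is by now standard, but the justification that the limit pair genuinely lies in $\mathcal{A}$ -- i.e., that $\bn$ depends on $x$ alone, takes only the values $\bn^\pm$, that $g_x + n_1 = 0$ and $n_2=0$ almost everywhere, and that $\int_{\TT} n_1 = 0$. These rigidity properties are already extracted inside Proposition~\ref{compactness} from the control $\int_{\TT} (g_{j,x}+n_{j,1})^2 + (g_{j,y}+n_{j,2})^2 \leq M\e$ together with the periodicity of $g$, so once the compactness statement is invoked the rest of the argument reduces to the Anzellotti Modica--Mortola estimate on the flat torus.
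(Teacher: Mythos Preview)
Your proposal is correct and follows essentially the same approach the paper indicates: the paper itself gives no independent argument for Lemma~\ref{thm:lower}, simply stating that it ``follows directly from the proof of (step 1) of Theorem~2.4 in \cite{Anzellotti},'' and your write-up is precisely a fleshed-out version of that step---discarding the nonnegative smectic terms, applying the Modica--Mortola inequality with $\Phi(\xi)=d(\bn^-,\xi)$, and invoking BV lower semicontinuity on the torus, together with Proposition~\ref{compactness} to certify $(\bn,g)\in\mathcal{A}$. One minor remark: when you subtract the mean from each $g_j$ the $L^2$ limit shifts by the constant $\frac{1}{4l^2}\int_{\TT} g$, not by zero; but since membership in $\mathcal{A}$ and the value of $G_0$ are unaffected by adding constants to $g$, this is harmless.
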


\begin{lemma} \it{(Construction)} \label{thm:upper}
For any $(\bn, g) \in \mc{A}$, there exists a sequence $(\bn_j, g_j) \in \mathcal{Y}  $, converging in $L^1(\TT , \mathbb{S}^2) \times L^2(\TT) $ as $j \to \infty$ to $(\bn, g)$, and such that
\[
  \limsup_{j \to \infty} G_{\e_j} (\bn_j,g_j) = G_0(\bn, g).
\]
\end{lemma}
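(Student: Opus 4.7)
The plan is to build the recovery sequence by inserting a one-dimensional Modica--Mortola-type transition layer of width $O(\e_j)$ at each jump point of $\bn$, modeled on the optimal geodesic realizing $c_0$, and then to define $g_j$ as an almost-antiderivative of $-n_{j,1}$ so that the layer-alignment terms vanish to leading order. Since $(\bn,g)\in\mathcal{A}$, the function $\bn=\bn(x)$ is $BV$ with values in the two-point set $\{\bn^\pm\}$, so its jump set consists of finitely many points $x_1<\cdots<x_K$ in $[-l,l)$, and $P_{\TT}(A_{\bn^-})=2lK$. Fix $\eta>0$, to be sent to zero at the end by a diagonal argument.

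Exploiting the $n_2\mapsto -n_2$ symmetry of $W$ together with the explicit geodesic recalled in appendix~\ref{geodesics}, I construct a transition profile $\gamma_\eta:[-T_\eta,T_\eta]\to\mathbb{S}^2$ lying in the great circle $\{n_2=0\}$, with $\gamma_\eta(\pm T_\eta)=\bn^\pm$, constant in small neighborhoods of the endpoints so that the glued function is $W^{1,2}$, and satisfying
\begin{equation*}
\int_{-T_\eta}^{T_\eta}\bigl(|\gamma_\eta'|^2+W(\gamma_\eta)\bigr)\,ds\le 2c_0+\eta.
\end{equation*}
This is obtained in the standard way by truncating the arc-length (in the degenerate metric $\sqrt{W}$) parametrization of the geodesic and reparametrizing so that $|\gamma_\eta'|^2\approx W(\gamma_\eta)$. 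For $j$ large enough that the intervals $I_i^j:=(x_i-\e_j T_\eta,x_i+\e_j T_\eta)$ are pairwise disjoint inside $[-l,l)$, I set $\bn_j(x,y):=\gamma_\eta^{\pm}((x-x_i)/\e_j)$ on $I_i^j$, with the orientation chosen to match the direction of the jump of $\bn$ at $x_i$, and $\bn_j(x,y):=\bn(x)$ elsewhere. Then $\bn_j\in\mathbf{W}^{1,2}(\TT,\mathbb{S}^2)$ depends only on $x$, $n_{j,2}\equiv 0$, and the change of variable $s=(x-x_i)/\e_j$ gives
\begin{equation*}
\int_{\TT}\bigl(\e_j|\nb\bn_j|^2+\e_j^{-1}W(\bn_j)\bigr)\,dx\,dy\le 2lK(2c_0+\eta)=(2c_0+\eta)P_{\TT}(A_{\bn^-}).
\end{equation*}

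For the layer displacement, let $\bar n_{j,1}:=\frac{1}{2l}\int_{-l}^l n_{j,1}(s)\,ds$ and define
\begin{equation*}
g_j(x):=-\int_0^x\bigl(n_{j,1}(s)-\bar n_{j,1}\bigr)\,ds+c_j,
\end{equation*}
with the constant $c_j$ chosen so that $\int_{\TT}g_j=\int_{\TT}g$. Then $g_j$ is periodic, lies in $W^{1,2}(\TT)$, and $(g_j)_x+n_{j,1}=\bar n_{j,1}$, $(g_j)_y+n_{j,2}=0$. Because $\int_{\TT}n_1=0$ is built into $\mathcal{A}$ and $\bn_j$ differs from $\bn$ only on a set of measure $O(\e_j)$, one gets $\bar n_{j,1}=O(\e_j)$, and the alignment contribution reduces to $4l^2\bar n_{j,1}^2/\e_j=O(\e_j)\to 0$. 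The convergence $\bn_j\to\bn$ in $L^1(\TT,\mathbb{S}^2)$ is immediate since the transitions live on a set of vanishing measure, while $(g_j)_x\to g_x=-n_1$ in $L^2(\TT)$ by dominated convergence, so the matching of means together with the Poincar\'e inequality on the torus yields $g_j\to g$ in $L^2(\TT)$. Combining these estimates gives $\limsup_j G_{\e_j}(\bn_j,g_j)\le(2c_0+\eta)P_{\TT}(A_{\bn^-})$, and a standard diagonal argument in $\eta\to 0$ produces a single recovery sequence achieving equality with $G_0(\bn,g)$. The main delicate point is the construction of $g_j$: one cannot simply integrate $-n_{j,1}$, since its mean is in general nonzero once the transitions are inserted, so one must subtract $\bar n_{j,1}$ to maintain periodicity; the fact that $\int_{\TT}n_1=0$ is hard-coded into $\mathcal{A}$ is precisely what forces $\bar n_{j,1}=O(\e_j)$ and keeps the resulting alignment error at $O(\e_j)$ rather than $O(1)$, so the upper bound passes.
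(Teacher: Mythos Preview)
Your argument is correct and follows the standard Modica--Mortola upper-bound recipe, but the paper takes a slightly different route in handling the periodicity constraint for $g$. Rather than subtracting the mean $\bar n_{j,1}$ and absorbing the resulting $O(\e_j)$ alignment error, the paper exploits a monotonicity/shift trick: using the explicit geodesic $\gamma_C(t)=(\sin(2\alpha t-\alpha),0,\cos(2\alpha t-\alpha))$ and Modica's regularized profile $\zeta_j$, it observes that $(\gamma_C(\zeta_j(\cdot)))_1\le(\chi(\cdot))_1\le(\gamma_C(\zeta_j(\cdot+\eta_j)))_1$ pointwise, so an intermediate-value argument produces a shift $\delta_j\in[0,\eta_j]$ for which $\int_{\TT}(\bn_j)_1=0$ \emph{exactly}. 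Then $g_j$ is the honest antiderivative of $-(\bn_j)_1$, the layer-alignment terms vanish identically, and no diagonal argument in $\eta$ is needed. Your approach trades this exactness for simplicity: you do not need the pointwise ordering of the profile, only the crude bound $|\bar n_{j,1}|\lesssim T_\eta\e_j$, at the cost of an extra diagonalization step. Both methods give the same $\Gamma$-limsup; the paper's is cleaner but relies on the explicit monotone first coordinate of the geodesic, while yours would adapt more readily to potentials whose optimal profile is not known explicitly.
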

\begin{proof}
The construction of a recovering sequence combines ideas from \cite{Modica1987,Modica-Mortola}, and the proofs of (step 2) of Theorem~2.4 in \cite{Anzellotti}, and condition (2.9) in \cite{Baldo90}.

As in \cite{Anzellotti}, given $(\bn, g) \in \mc{A}$, for $(x,y)\in\TT$ we define
$$
\rho(x,y)=\Big\{ \begin{array}{ll}
 - \mbox{ dist}((x,y),\partial A_{\bn^-} ) & \mbox{ if } (x ,y)\in A_{\bn^-} , \\
 \quad \mbox{dist}((x,y),\partial A_{\bn^-} )& \mbox{ if } (x,y)\notin A_{\bn^-} .
\end{array}
$$
Note that because $\bn$ is a function only of $x$, we have $\rho(x,y_1)=\rho(x,y_2)$ for any $(x,y_1), (x,y_2)\in \TT$, that is $\rho(x,y)=\rho(x)$.

Keeping in mind Lemma~5.2 in subsection~\ref{geodesics}, we pick  $\gamma_C(t)=(\sin(2\alpha t- \alpha),0,\cos(2\alpha t-\alpha))$ to construct $\psi_j:[0,1] \to \bR$ as
\begin{equation*}
\psi_j(t)=\int_0^t \frac{2\, \alpha \epsilon_j}{\sqrt{\epsilon_j+W(\gamma_C(s))} }\, ds.
\end{equation*}
If $\eta_j=\psi_j(1)$, we have $0<\eta_j < 2 \, \epsilon_j^{1/2}\, \alpha$, and denoting by $\hat\zeta_j:[0,\eta_j]\to [0,1]$ the inverse function of $\psi_j$, we set
\begin{equation*}
\zeta_j(t)=\Bigg\{
\begin{array}{cl}
0 & \mbox{ if } t<0, \\
\hat\zeta_j(t) & \mbox{ if } 0\leq t \leq \eta_j,\\
1 &\mbox{ if } t>\eta_j. \\
\end{array}
\end{equation*}
We next consider
\begin{equation*}
\chi(t)=\Bigg\{
\begin{array}{cl}
\bn^-& \mbox{ if } t<0, \\
\bn^+ &\mbox{ if } t>0, \\
\end{array}
\end{equation*}
so that we can write $\bn(x)=\chi(\rho(x))$. This is significant, because  for every $t$ it holds $\big(\gamma_C(\zeta_j(t))\big)_1 \leq \big(\chi(t)\big)_1$ and
$\big(\chi(t)\big)_1 \leq \big(\gamma_C(\zeta_j(t+\eta_j))\big)_1$, thus there exists a $\delta_j\in[0,\eta_j]$ for which
\begin{equation*}
\int_{\TT} \big(\gamma_C(\zeta_j(\rho(x)+\delta_j))\big)_1 = \int_{\TT} \big(\chi(\rho(x))\big)_1 =\int_{\TT} n_1 =0.
\end{equation*}
We then define, for $(x,y)\in\TT$,
$\displaystyle{
{\bn}_j(x,y)=\gamma_C(\zeta_j(\rho(x)+\delta_j)),
}$
since with  this choice we have $\bn_j(x,y)=\bn_j(x)$ and $\int_{\TT} ({n}_j)_1 =0$. Therefore, by setting $g_j(x)=\int_{-l}^x (\hat{n}_j)_1 (x) \, dx $, and using the fact that  by definition of $\gamma_C$ the $y$-component of $\bn_j$ is identically equal to zero, we can  argue as in \cite{Anzellotti,Baldo90} to conclude that the sequence $(\bn_j, g_j)$ verifies the required conditions.
\end{proof}

The following theorem is a consequence of the previous $\Gamma$-convergence result, and uniqueness of the pattern of the minimizers of $G_0$. A similar interface limit for a periodic system is studied in \cite{Choksi-Sternberg}, in here we apply their method of proof.

\begin{theorem}\label{thm:deGennes}
Let $\{(\bn_{\e}, g_{\e})\} \in \mc{Y}$ be a sequence of minimizers of $G_{\e}$. Then there exists a sequence $\{c_\e\} \subset (-l, l)$ such that
\[
(\tilde{\bn}_{\e}, \tilde{g}_{\e}) \to (\bn, g) \qquad \mbox{ in } L^1(\TT , \mathbb{S}^2) \times L^2(\TT),
\]
where
\begin{eqnarray*}
&&\tilde{\bn}_{\e}(x,y) = \bn_{\e} (x+c_{\e}, y), \qquad \tilde{g}_{\e} (x,y) = g_{\e}(x+c_{\e},y ), \\
&&\bn = \bn^- \chi_{L^-} + \bn^+ \chi_{L^+},  \qquad  g_x = -\bar{n_1} \chi_{L^-} + \bar{n_1} \chi_{L^+},
\end{eqnarray*}
 $L^- = \{ x : \frac l2 < |x|  < l\} $ and $L^+ = \{ x :  |x|  < \frac l2\} $.
Furthermore, $G_0(\bn, g) = 8 c_0  r$ with $c_0$ as in (\ref{c_0}).
\end{theorem}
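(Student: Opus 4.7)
The plan is to combine the $\Gamma$-convergence framework established above with a uniqueness-up-to-translation argument in the spirit of \cite{Choksi-Sternberg}.

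First, I would invoke Lemma~\ref{thm:upper} applied to any fixed competitor $(\tilde\bn,\tilde g)\in\mc{A}$ to show $\limsup_{\e\downarrow 0} G_\e(\bn_\e,g_\e)\le\min G_0$, so in particular the energies $G_\e(\bn_\e,g_\e)$ are uniformly bounded. Proposition~\ref{compactness} then yields a subsequence for which $\bn_{\e_k}\to\bn_*$ in $L^1$ and $g_{\e_k}-\langle g_{\e_k}\rangle\to g_*$ in $L^2$ for some $(\bn_*,g_*)\in\mc{A}$, and the lower bound in Lemma~\ref{thm:lower} combined with the matching upper bound forces $(\bn_*,g_*)$ to be a minimizer of $G_0$.

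Next I would identify the structure of minimizers of $G_0$ on $\mc{A}$. Because $\bn=\bn(x)$ takes only the values $\bn^\pm$, the set $A_{\bn^-}$ has the product form $S\times[-l,l)$ for some periodic $S\subset[-l,l)$. The zero-mean constraint $\int_{\TT} n_1=0$, combined with $n_1=\mp\bar n_1$ on $A_{\bn^\mp}$, forces $|S|=l$. The perimeter factorizes as $P_{\TT}(A_{\bn^-})=2l\,\#(\partial S)$, so the problem reduces to the one-dimensional isoperimetric problem on the circle $\mathbb R/(2l\mathbb Z)$ with prescribed volume fraction $1/2$. Its solution is well known to be, up to translation, a single half-circle, giving $\#(\partial S)=2$ and thus $G_0(\bn,g)=2c_0\cdot 4l=8c_0 l$ (so the $r$ in the theorem is $l$). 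This simultaneously yields the energy value and the uniqueness of the minimizer of $G_0$ up to an $x$-translation.

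Finally, I would exploit the $x$-translation invariance of $G_\e$ to choose $c_\e\in(-l,l)$ so that the translated minimizer $\tilde\bn_\e(x,y)=\bn_\e(x+c_\e,y)$ sits in a canonical position, for example by selecting a maximizer in
\[
c_\e \in \arg\max_{c\in[-l,l]}\int_{-l/2}^{l/2}\!\!\int_{-l}^l \bn_\e(x+c,y)\cdot\mathbf{e}_1\,dx\,dy,
\]
which exists by continuity in $c$ and compactness. Applying the previous two steps to $(\tilde\bn_\e,\tilde g_\e)$, any subsequential $L^1\times L^2$ cluster point is a minimizer of $G_0$ whose $\bn^+$-region is centered on $(-l/2,l/2)$ in the sense prescribed by the canonical choice, and by the uniqueness-up-to-translation above this cluster point must coincide with the pair $(\bn,g)$ specified in the theorem; the formula for $g_x$ then follows from $g_x+n_1=0$ in $\mc{A}$ together with the zero-mean normalization. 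Since every subsequence admits a further subsequence converging to the same limit, the full translated sequence converges.

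The main obstacle is the uniqueness-up-to-translation step: one must both establish the sharp 1D isoperimetric characterization on the periodic interval (ensuring any other admissible $S$ of measure $l$ has strictly larger boundary count than 2) and arrange a single consistent choice of translations $c_\e$ which produces convergence of the whole sequence rather than merely of a subsequence. This is precisely the periodic compactness/selection device developed in \cite{Choksi-Sternberg}, which has been imported into this setting.
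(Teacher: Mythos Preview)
Your proposal is correct and matches the paper through the first two steps: bounding the energies via the recovery sequence, invoking Proposition~\ref{compactness} and Lemmas~\ref{thm:lower}--\ref{thm:upper} to identify subsequential limits as minimizers of $G_0$, and reducing the structure of those minimizers to the one-dimensional periodic isoperimetric problem with volume fraction $1/2$. You also correctly note that the $r$ in the energy value should read $l$.

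Where you differ is in the translation-selection step. The paper argues by contradiction: it assumes
\[
\inf_{a\in\mathbb{T}^1}\big\|(\bn_{\e_j}(\cdot+a,\cdot),\,g_{\e_j}(\cdot+a,\cdot))-(\bn,g)\big\|_{L^1\times L^2}\ge\delta
\]
along some $\e_j\to 0$, extracts a convergent subsequence via compactness, and notes that its limit, being a $G_0$-minimizer, must equal $\bn(\cdot+b,\cdot)$ for some $b$, contradicting the lower bound. You instead pick $c_\e$ explicitly as a maximizer of a centering functional and pass the argmax property to the limit via uniform-in-$c$ convergence, then conclude by the subsequence-of-subsequence principle. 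Your route is more constructive and automatically pins down the $\bn^+$/$\bn^-$ orientation (which the paper's argument handles implicitly, since a shift by $l$ on the torus swaps $L^+$ and $L^-$); the paper's contradiction argument is shorter and sidesteps the need to check that the limiting centering functional has a \emph{unique} maximizer. Both are standard realizations of the periodic selection device from \cite{Choksi-Sternberg}.
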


\begin{proof}
 Lemmas ~\ref{thm:lower}~and~\ref{thm:upper} result in
$
 G_0 = \Gamma-\lim_{\e \to 0} G_\e.
$
Also, it is clear that the minimizers of $G_0$ are vertical strips with two parallel 1d-tori, due to the condition $\int_{\TT} n_1 = 0$. In fact, these are the minimizers of the periodic isoperimetric problem studied in \cite{Choksi-Sternberg}.
 We argue by contradiction. Suppose there is $\delta >0$ and a sequence $\e_j \to 0$ such that
 \begin{equation} \label{contradiction}
 \inf_{a \in \mathbb{T}^1} \| (\bn_{\e_j}(\cdot + a,\cdot), \; g_{\e_j}  (\cdot + a,\cdot) ) - (\bn,g) \|_{L^1 (\TT, \mathbb{S}^2) \times L^2(\TT)} \geq \delta.
 \end{equation}
 Since $(\bn_{\e_j},  g_{\e_j})$ is a sequence of minimizers of $G_\e$, it follows from Proposition \ref{compactness} that there is a further subsequence $\{(\bn_{\e_j}, g_{\e_j} ) \}$, not relabeled, and a minimizer $(\bm, h) \in \mc{A}$ of $G_0$ such that $(\bn_{\e_j}, g_{\e_j})  \to (\bm, h) $ in $L^1(\TT,\mathbb{S}^2) \times L^2(\TT)$.  By the uniqueness of the pattern of the interface limit, $\bm$ has two phases separated by two vertical line segments, i.e.,
 \[
 \bm(x,y) = \bn(x+b, y)
 \]
 for some $b \in \mathbb{T}^1$, which is in contradiction with (\ref{contradiction}).

\end{proof}

\section{Chen-Lubensky energy} \label{section:CL}
In this section, to find the chevron structure for certain regimes of the magnetic field strengths we study the modified Chen-Lubensky functional for smectic A liquid crystals presented in \cite{Luk, J-P}. We start by reformulating the energy  in order to better understand how the layer evolves from the undulations to the chevron profiles.

The Chen-Lubensky model for smectic A liquid crystals is given by
\begin{eqnarray}
\mc{G_{C}} (\psi, \bn) &=&  \int_{\omm} \Big(D| \Dn^2 \psi |^2 + C_{\perp}| \Dn \psi |^2 + K |\nb \bn|^2
 \nonumber\\
 [-1.5ex] \label{CL-energy}\\[-1.5ex]
 && \qquad + \frac{g}{2} \Big( |\psi|^2 - \frac{r}{g} \Big)^2 - \chi_a  H^2(\bn \cdot \bh)^2 \Big) \, d\bx, \nonumber
\end{eqnarray}
where  $ \Dn= \nb - i q\mathbf{n},\; \Dn^2= \Dn \cdot \Dn, $ and $D,
C_{\perp},$ $ g$, and $r = T_{NA}-T$ are positive constants.
As for the de Gennes energy, we assume that the smectic order parameter $\psi(\bx) = \rho(\bx) e^{i q \omega(\bx)}$ has a constant density $\rho$. Under this assumption, the energy (\ref{CL-energy}) simplifies to
\begin{eqnarray*}
\mc{G_{C}} (\omega, \bn)\nt \nt &=& \nt \nt \int_{\omm} \Big(
 D \rho^2 q^4  | \nb \omega - \bn  |^4 + D \rho^2 q^2  (\Delta
\omega - \nb \cdot \bn)^2 + C_{\perp} q^2 \rho^2  | \nb \omega -
 \bn|^2  \\
  & & \qquad + K |\nb \bn|^2  - \chi_a  H^2 (\bn \cdot \bh)^2 \Big) \, d \bx.
\end{eqnarray*}
We again nondimensionalize with respect to the thickness of the sample by making the change of variables $\bar{\bx} = \bx/d$ in $\omm = (-L, L)^2 \times (-d, d) \subset \mathbb{R}^3$,  and obtain
\begin{eqnarray}
\mc{G_{C}} (\vp, \bn) &= &\frac{ d K}{ \e} \int_{\tilde{\omm}} \Big(  D_1 \e (\Delta \vp - \nb \cdot \bn)^2 + \frac{D_2}{2\e} |\nb \vp -\bn |^4  \nonumber\\
 [-1.5ex] \label{CL}\\[-1.5ex]
&& \qquad \quad  + \frac{1}{\e} |\nb \vp -\bn |^2  +\e |\nb \bn|^2 - \tau (\bn \cdot \bh)^2 \Big) \, d\bar{\bx}, \nonumber
\end{eqnarray}
where
\[
    D_1 = \fr{D \rho^2 q^2 }{K},\qquad   D_2 = \fr{2 D q^2}{C_{\perp}} , \qquad \vp = \frac\omega d,
\]
\[
\e = \frac{\lambda}{d}, \qquad \lambda = \sqrt{\frac{K}{C_{\perp} q^2 \rho^2}}, \qquad \tau = \frac{ \chi_a H^2 d^2 \e }{K},
\]
and
\[
\tilde{\omm} = (-l, l)^2 \times (-1,1 ), \qquad l = \frac{L}{d}.
\]
Note how the energy includes the same parameters $\e$ and $\lambda$ which appear in the de Gennes reformulation of section~\ref{section:deGennes}. The functional (\ref{CL}) has been studied for the onset of undulations in \cite{G-J3}, where the authors consider two boundary conditions for the layer variable $\vp$, and prove that the critical fields are estimated at $1$ and $\pi$, respectively. Here, as in section~\ref{section:deGennes} we consider larger values of $\tau$ by setting $\tau = \frac{\sigma}{\e}$ with $\sigma = \mathcal{O}(1)$, where the chevron structure is  seen experimentally.

We limit our study to the case $\bn \in \mathbb{S}^1$, and assume $\vp = z-g(x)$, where $g$ denotes the layer displacement. Hence, setting  $\bn = (\sin \theta, 0, \cos \theta)$ with $\theta = \theta(x)$,  and $I=(-l, l)$, the energy becomes
\begin{eqnarray*}
 &&\int_I \Big[  D_1 \e (g^{''} + \cos \theta \theta')^2 + \frac{D_2}{2 \e} \left( (g' + \sin \theta)^2 +( 1- \cos \theta)^2 \right)^2\\
&& \qquad  + \frac{1}{\e} \left((g'+\sin \theta)^2 + (1-\cos \theta)^2 \right) + \e \theta'^2 - \frac{\sigma}{\e} \sin^2 \theta \Big] \, dx,
\end{eqnarray*}
where again we have taken $\bh ={\bf e}_1$.
To highlight the double well structure of the potential, we rewrite  $1-\cos \theta = 2 \sin^2 \frac{\theta}{2}$ and add a constant. In conclusion, we are led to work with the energy
\begin{eqnarray}
F^{C}_{\e} (\theta, g) \nt \nt &=& \nt \nt \int_I \Big[  D_1 \e (g^{''} + \cos \theta \,  \theta')^2 + \frac{D_2}{2 \e} (g' + \sin \theta)^4 + \frac{1}{\e} (g'+\sin \theta)^2   \nonumber\\
 [-1.5ex] \label{CL_S1}\\[-1.5ex]
&& \qquad+ \frac{4 D_2}{\e} (g'+\sin \theta)^2 \sin^4 \frac{\theta}{2} +  \e\,  \theta'^2 +\frac{1}{\e} \, W(\theta) \Big] \, dx, \nonumber
\end{eqnarray}
where
\begin{equation}\label{well-CL}
W(\theta) = 8D_2\sin^8\frac{\theta}{2} + 4(1+\sigma) \sin^4 \frac{\theta}{2} - 4\sigma\sin^2 \frac{\theta}{2} +a_0.
\end{equation}
The constant $a_0$ is chosen so to ensure that $W(\theta)$ is nonnegative. Direct computations show that  for $\theta \in [-\pi, \pi]$, $W(\theta)$ presents a double well potential, and that, denoting the zero set of $W$ by $\{\pm \beta \}$ with $\beta>0$, one has $\beta = \alpha$ when $D_2 = 0$, and $\beta$ approaching $\alpha $ from the left as $D_2$ decreases to $0$. We provide some details on the behavior of the zeros of $W$ as function of $D_2$ and $\sigma$  in subsection~\ref{CL:well}. { In particular, from (\ref{expression-r}) and (\ref{expression-beta}) we see that $\sin \beta > 0 $ whenever $\sigma >0$. In Figure~\ref{fig:potential}, we present a plot of $W$ for fixed $D_2$ and various values of $\sigma$. 

\begin{figure}
\centering
        \epsfig{file=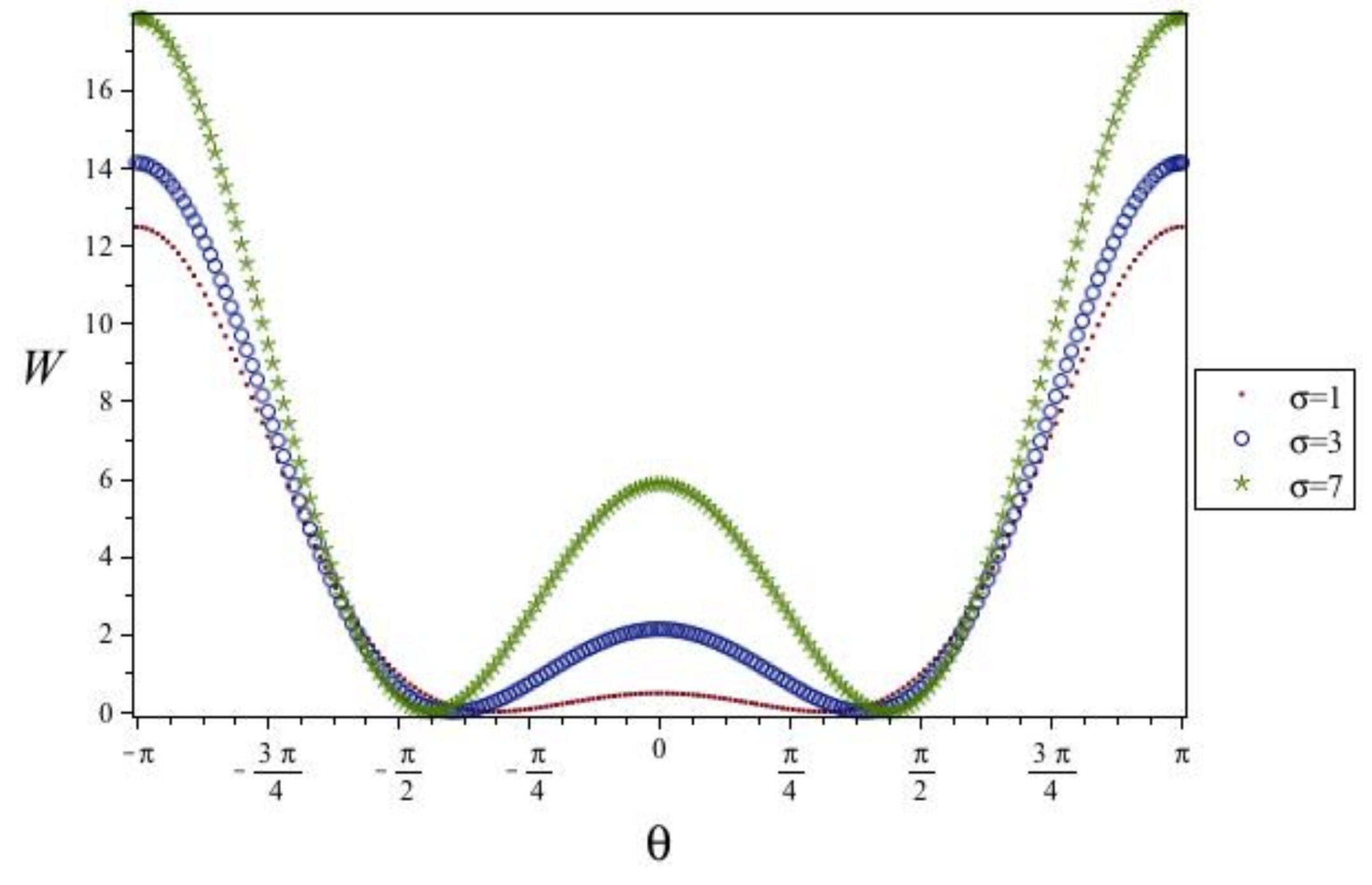,width=0.7\linewidth,clip=}
\caption{Plot of the double well potential $W(\theta)$ for $D_2=1$, and various values of $\sigma$. }
\label{fig:potential}
\end{figure}

We define the  sets
\begin{eqnarray}
\mathcal{Y_{C}} \nt \nt  &=& \nt \nt \{ (\theta, g) \in W^{1,2}(I) \times W^{2,2}(I) : \theta(-l) = \theta(l),  \nonumber \\
&& \qquad \qquad \qquad \qquad \qquad \qquad\;\; \, g(-l) = g(l), \; g'(-l) = g'(l) \}, \label{Y_C} \\
\mathcal{A_{C}} \nt \nt &=&  \nt \nt \{ (\theta, g) \in BV(I) \times W^{1,2}(I): g' \in BV(I), \nonumber\\
&& \qquad \qquad \qquad \qquad \qquad \qquad g'+\sin \theta =0 \, a.e,  \; \int_I \theta =0  \}, \label{A_C}
\end{eqnarray}
and the functionals
\begin{equation} \label{F_e_CL}
G^C_{\e} (\theta, g) :=
    \begin{cases}
         F_{\e} (\theta, g), \quad &\mbox{if } (\theta, g) \in  \mathcal{Y_C}  ,\\
         +\infty &\mbox{else},
    \end{cases}
\end{equation}
and
\begin{equation} \label{F_0_CL}
G^C_0 (\theta, g) :=
    \begin{cases}
         \int_I | (\Phi \circ \theta)' | + |\Phi(\ttheta(l))- \Phi(\ttheta(-l))| \quad &\mbox{if } (\theta, g)   \in \mc{A_C} , \\
         +\infty &\mbox{else},
    \end{cases}
\end{equation}
 in here $\Phi(s) =  2 \int_{-\beta}^s \sqrt{W(t)}\, dt$,  while $\ttheta(\pm l)$ denotes the trace of $\theta$ on $\pm l$. Note that for $(\theta, g) \in \mc{A_C}$,
\begin{equation} \label{jumps_CL}
\int_I | (\Phi \circ \theta)' | = \Phi(\beta) ( \mbox{number of jumps} ).
\end{equation}
A $\Gamma$-convergence analysis for the functional $G^C_\e$ as $\e$ tends to zero results in a picture analogous to the one obtained for the de Gennes functional $G_\epsilon$  in section~\ref{section:deGennes}. This shows that the de Gennes model captures the essence of the chevron creation phenomenon seen in experiments.
In particular, we have the following result:

\begin{theorem}\label{thm:convergence_CL}
Let $\{(\theta_j, g_j)\} \in \mc{Y_C}$ be a sequence of minimizers to $G^C_{\e_j}$ for $\e_j \to 0$. Then there are sequences of numbers $ \{a_j\} \subset \{\pm 1\}$   and $\{c_j\} \subset (-l, l) $ such that $(\hat{\theta}_j, \hat{g}_j)  \to (\theta, g) $ in $L^1(I) \times L^2(I)$ where
\begin{eqnarray*}
&&\hat{\theta}_j (x) = a_j \theta_j(x+c_j), \qquad \hat{g}_j (x) = a_j g_j(x+c_j),\\
&&\theta = \beta \chi_J - \beta (1-\chi_{J}), \qquad \mbox{where} \quad J = (-l/2, l/2).
\end{eqnarray*}
Furthermore, $G_0(\theta, g) = 2 \Phi(\beta).$
\end{theorem}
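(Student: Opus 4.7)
The plan is to follow the template of the proof of Theorem~\ref{thm:deGennes}. The first step will be to invoke the $\Gamma$-convergence $G^C_0 = \Gamma\text{-}\lim_{\e\to 0} G^C_\e$ together with the associated compactness property for the Chen--Lubensky functional, that is, the one-dimensional analogues of Proposition~\ref{compactness} and Lemmas~\ref{thm:lower}--\ref{thm:upper} developed earlier in this section. Since $G^C_0$ and $\mc{A_C}$ are both invariant under the sign flip $(\theta,g)\mapsto(-\theta,-g)$ and under spatial translation on the quotient circle, convergence can only be expected after choosing a suitable translate $c_j\in(-l,l)$ and a sign $a_j\in\{\pm 1\}$; compactness will then deliver, up to a subsequence, a cluster point $(\bar\theta,\bar g)$ which is a minimizer of $G^C_0$ over $\mc{A_C}$.

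Next, I would characterize the minimizers of $G^C_0$ explicitly. For any admissible pair $(\theta,g)$ with $G^C_0(\theta,g)<\infty$, I would first argue that $\theta(x)\in\{\pm\beta\}$ a.e., exploiting the fact that the zeros of $W$ on $[-\pi,\pi]$ are precisely $\pm\beta$, so that excursions of $\theta$ away from these wells can only increase $\int_I|(\Phi\circ\theta)'|$. Combined with the constraint $\int_I\theta=0$, this forces $|\{\theta=\beta\}|=|\{\theta=-\beta\}|=l$. Using the identity (\ref{jumps_CL}), the energy then reads
\[
G^C_0(\theta,g)=\Phi(\beta)\,N + |\Phi(\ttheta(l))-\Phi(\ttheta(-l))|,
\]
where $N$ is the number of interior jumps of $\theta$ in $I$; note $N=0$ is incompatible with $\int_I\theta=0$. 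A short case analysis on the parity of $N$ and on whether the boundary traces match shows that the minimum value is $2\Phi(\beta)$, attained either by profiles with $N=2$ and matching boundary traces or by profiles with $N=1$ and mismatched boundary traces. Since $\mc{Y_C}$ carries periodic boundary conditions, these two families are simply translates of one another on the quotient circle, so modulo translation and sign flip the minimizer is unique and coincides with the stated profile $\theta=\beta\chi_J-\beta(1-\chi_J)$; the relation $g'=-\sin\theta$ then fixes $g$ up to an additive constant.

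The last step will be a contradiction argument in the spirit of the proof of Theorem~\ref{thm:deGennes}, following the Choksi--Sternberg scheme: suppose there exist $\delta>0$ and a subsequence $\e_j\to 0$ with
\[
\inf_{a\in\{\pm 1\},\,c\in(-l,l)}\big\|\big(a\,\theta_{\e_j}(\cdot+c),\,a\,g_{\e_j}(\cdot+c)\big)-(\theta,g)\big\|_{L^1(I)\times L^2(I)}\ge\delta;
\]
the compactness argument of the first step then produces a further subsequence converging in $L^1(I)\times L^2(I)$ to some minimizer of $G^C_0$, which by the uniqueness established in the second step must coincide with a translate--sign-flip of $(\theta,g)$, contradicting the standing inequality.

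I expect the main obstacle to lie in the minimizer classification, specifically in handling the boundary term $|\Phi(\ttheta(l))-\Phi(\ttheta(-l))|$, which is absent from the torus setting of Section~\ref{section:deGennes}. The delicate part will be to quantify precisely how an interior jump can be exchanged for a boundary-trace mismatch of equal cost $\Phi(\beta)$, and then to verify that this apparent degeneracy collapses, under the combined translation and sign-flip symmetry, to the single canonical two-jump profile stated in the theorem.
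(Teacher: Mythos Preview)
Your proposal is correct and follows essentially the same route as the paper: invoke the $\Gamma$-convergence package for $G^C_\e$ (compactness, lower bound, upper bound), classify the minimizers of $G^C_0$ up to translation and sign flip, and conclude by the Choksi--Sternberg contradiction argument exactly as in Theorem~\ref{thm:deGennes}.

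Two small remarks. First, your justification that a minimizer must satisfy $\theta\in\{\pm\beta\}$ a.e.\ via ``excursions away from the wells increase $\int_I|(\Phi\circ\theta)'|$'' is not quite the right mechanism (a constant $\theta\equiv c$ has zero variation regardless of $c$); the cleaner route is simply that any cluster point produced by the compactness proposition already satisfies $\theta\in\{\pm\beta\}$ a.e., so you may restrict the minimizer classification to that class from the outset. Second, your case analysis on the boundary term is more explicit than the paper's: the paper asserts in one line that the minimizer has $\ttheta(-l)=\ttheta(l)$ and hence two interior jumps, whereas you correctly observe that the configuration with $N=1$ and mismatched traces also costs $2\Phi(\beta)$ and then identify it as a translate of the two-jump profile on the quotient circle. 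That extra care is warranted and fills in what the paper leaves implicit.
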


\begin{proof}
By following the proof  of Theorem~\ref{thm:deGennes}, the result is a consequence of Proposition \ref{CL-compactness}, Lemmas~\ref{thm:lower_CL}~and~\ref{thm:upper_CL} below, and the uniqueness of the minimizing pattern of $G^C_0$. By definition of $\mc{A_C}$ a minimizer  $(\theta,g)$ of $G^C_0$ must have $\int_I \theta =0$, but since the minimizer satisfies $\ttheta(-l) = \ttheta(l) $, it follows that $\theta$ has  two internal jumps.
\end{proof}

As previously done, we start by providing a compactness result, then proceed to obtain the lower and upper bounds of Lemmas~\ref{thm:lower_CL}~and~\ref{thm:upper_CL}. The arguments of the proofs in this section are combinations of ideas from the standard Modica-Mortola references  \cite{Modica1987,Modica-Mortola}, and the classical work of Owen, Rubinstein and Sternberg \cite{Sternberg90}, which illustrates how to treat boundary conditions under  $\Gamma$-limits.

\begin{proposition}  \it{(Compactness)}\label{CL-compactness}
Let the sequences $\{\e_j\}_{j \uparrow \infty} \subset (0, \infty)$, and $ \{ (\theta_j , g_j)\}_{j \uparrow \infty}~ \subset~ \mc{Y_C}$, be such that
\[
\e_j \to 0, \qquad \mbox { and } \qquad \{ G^C_{\e_j}(\theta_j, g_j)\}_{j \uparrow \infty} \qquad \mbox{is bounded.}
\]
There exists a subsequence $\{ (\theta_{j_k}, g_{j_k}) \}$ and a $(\theta, g) \in \mathcal{A_C}$ such that
\[
\theta_{j_k} \to \theta \mbox{ in } L^1(I) \qquad \mbox{ and } \qquad g_{j_k} - \frac{1}{|I|} \int_I g_{j_k}\, dx \to g \mbox{ in } L^2(I).
\]

\end{proposition}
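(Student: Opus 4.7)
The plan is to mirror the de~Gennes compactness proof (Proposition~\ref{compactness}): extract a Modica--Mortola bound on $\theta_j$, extract a Poincar\'e--Wirtinger bound on $g_j$, pass to a common subsequence, and then verify all constraints defining $\mc{A_C}$. From the uniform bound $G^C_{\e_j}(\theta_j,g_j)\le M$ I would read off separately the double-well pair $\e_j\int_I\theta_j'^{\,2}\,dx+\tfrac{1}{\e_j}\int_I W(\theta_j)\,dx\le M$, the smectic compatibility bound $\tfrac{1}{\e_j}\int_I(g_j'+\sin\theta_j)^2\,dx\le M$, and the higher order term $D_1\e_j\int_I(g_j''+\cos\theta_j\,\theta_j')^2\,dx\le M$. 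Only the first two pieces are actually needed for the compactness; the third would give extra $H^2$-like control on $g_j$ which is not required here.

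For the angle I would introduce $w_j:=\Phi(\theta_j)$ with $\Phi(s)=2\int_{-\beta}^s\sqrt{W(t)}\,dt$ and apply AM--GM: $\e_j\theta_j'^{\,2}+\tfrac{1}{\e_j}W(\theta_j)\ge 2\sqrt{W(\theta_j)}\,|\theta_j'|=|(\Phi\circ\theta_j)'|=|w_j'|$, yielding $\int_I|w_j'|\le M$. Since $\int_I W(\theta_j)\le M\e_j\to 0$, a subsequence of $\theta_j$ converges a.e.\ to the zero set of $W$, which after a canonical lift modulo $2\pi$ I identify with $\{\pm\beta\}$; in particular $w_j$ is uniformly $L^\infty$-bounded and hence bounded in $BV(I)$. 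Standard $BV$-compactness then extracts a subsequence $w_{j_k}\to w$ in $L^1(I)$ and a.e. Because $\Phi(-\beta)=0<\Phi(\beta)$ the map $\Phi$ separates the two wells, so the a.e.\ limit of $\theta_{j_k}$ is uniquely determined by $w$, producing $\theta_{j_k}\to\theta$ in $L^1(I)$ with $\theta\in\{\pm\beta\}$ a.e.; $BV(I)$-regularity of $\theta$ then follows since a two-valued function has the same jump set as $\Phi(\theta)=w\in BV(I)$.

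For the layer displacement, the smectic bound gives $g_j'+\sin\theta_j\to 0$ in $L^2(I)$, hence $\|g_j'\|_{L^2(I)}$ is uniformly bounded, and the periodic Poincar\'e--Wirtinger inequality produces a uniform $H^1$ bound on $g_j-\tfrac{1}{|I|}\int_I g_j\,dx$. A further subsequence then converges strongly in $L^2(I)$ (weakly in $H^1$) to some $g$, and passing to the limit yields $g'+\sin\theta=0$ a.e., so $g'\in\{\pm\sin\beta\}$ a.e.\ and $g'\in BV(I)$ because $\theta$ is. It remains to check the zero-mean condition: periodicity of $g_j$ gives $\int_I(g_j'+\sin\theta_j)\,dx=\int_I\sin\theta_j\,dx\to 0$, and since $\sin\beta\neq 0$ by subsection~\ref{CL:well}, the sets $\{\theta=\beta\}$ and $\{\theta=-\beta\}$ must carry equal measure in the limit, which is exactly $\int_I\theta=0$ as required by $\mc{A_C}$.

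The principal obstacle I anticipate is the lifting issue for $\theta_j$: because $W$ is $2\pi$-periodic, the Modica--Mortola control alone does not prevent $\theta_j$ from oscillating between equivalent wells $\pm\beta+2\pi k$. I would handle this, as is standard in the Modica--Mortola literature cited in the excerpt, by projecting $\theta_j$ to a canonical fundamental domain of $W$ compatible with the periodic boundary data, observing that such a projection does not increase the double-well energy. Once $\theta_j$ is confined to a bounded range, the argument reduces, via the parametrization $\bn=(\sin\theta,0,\cos\theta)$, to the one-dimensional analog of the proof of Proposition~\ref{compactness}.
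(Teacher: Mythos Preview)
Your proposal is correct and follows essentially the same route as the paper: the Modica--Mortola bound on $\Phi\circ\theta_j$, $BV$-compactness to extract a two-valued limit $\theta\in BV(I;\{\pm\beta\})$, Poincar\'e--Wirtinger for the mean-normalized $g_j$, and passage to the limit in the constraint $g'+\sin\theta=0$ together with the periodicity of $g_j$ to obtain $\int_I\theta=0$.

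The one place where you diverge is the lifting discussion. The paper does not introduce a projection onto a fundamental domain; it simply asserts $|\theta_j|\le\pi$ and uses this, together with $\int_I|(\Phi\circ\theta_j)'|\le M$, to get the uniform $W^{1,1}$ bound on $\Phi\circ\theta_j$. In other words, the paper treats the range restriction as part of the problem setup (the physical object is $\mathbf n=(\sin\theta,0,\cos\theta)$, so one may always take a representative with $|\theta|\le\pi$), whereas you flag it as an obstacle and propose a reduction step. Your caution is not misplaced---such a projection can in principle interfere with the $W^{1,2}$ regularity and the periodic boundary condition if $\theta_j$ were to cross $\pm\pi$---but once the convention $|\theta_j|\le\pi$ is adopted, the argument collapses to exactly what the paper does, and the invertibility of $\Phi$ on $[-\pi,\pi]$ (it is strictly increasing there) replaces any explicit lifting. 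Your observation that the second-order term $D_1\e_j\int_I(g_j''+\cos\theta_j\,\theta_j')^2$ plays no role in the compactness is also consistent with the paper, which never invokes it.
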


\begin{proof}
Since $G_{\e_j} (\theta_j, g_j) \leq M$, we may assume that $(\theta_j, g_j) \in \mathcal{Y_C}$, so that
\[
2\, \int_I \sqrt{W(\theta)} \,  |\theta_j'| \, dx \leq M,
\]
which leads to
\[
\int_I |(\Phi (\theta_j) )' | \, dx \leq M,
\]
where $\Phi(s) =  2 \int_{-\beta}^s \sqrt{W(t)}\, dt$. This, together with $ |\theta_j| \leq \pi$, implies that $\Phi \circ \theta_j $ is uniformly bounded in $W^{1,1}(I)$. Hence, we may extract a subsequence, not relabeled, such that
\[
\Phi \circ \theta_j \to  w \mbox{ in } L^1(I),
\]
for some $w \in BV(I)$, and, possibly up to another subsequence,  a.e.  in $I$.
Being $\Phi$ continuous and strictly increasing, we also have $\theta_j \to \Phi^{-1}(w) =: \theta $ a.e., and given that $W(\theta_j)$ converges to zero a.e, this implies $\theta(x) \in \{\pm \beta\}$ a.e. in $I$. Then $w = \Phi(\beta) \chi_E + \Phi(-\beta) (1-\chi_E)$, from which we gather
\begin{equation} \label{bv-theta}
\theta = \beta \chi_E - \beta(1 - \chi_E),
\end{equation}
and since $w\in BV(I)$, we conclude  $\theta \in BV(I)$.

From the energy bound, we also know $\int_I (g_j'+\sin \theta_j)^2 \, dx \leq M \e_j$, which yields $\int_I |g_j'|^2 \, dx \leq C$. Thus there exist a subsequence (not relabeled), and a function $g\in W^{1,2}(I)$ such that
\[
 g_j- \frac{1}{2l} \int_I g_j\, dx \rightharpoonup g \mbox{ in } W^{1,2}(I).
\]
But
\begin{equation} \label{g_functional}
0 =  \liminf_{j \to \infty} \int_I (g_j'+ \sin \theta_j )^2 \, dx \geq \int_I (g' + \sin \theta)^2 \, dx,
\end{equation}
gives
$
g' + \sin \theta =0
$ a.e in $I$.
Furthermore, the periodic boundary condition $g_j(-l) = g_j(l)$ and (\ref{g_functional}) imply $\int_I \sin \theta_j(x) \, dx \to 0$, hence using the Dominated Convergence theorem  we have $\int_I \sin \theta =0 $, and conclude  $\int_I \theta =0$ by (\ref{bv-theta}).
\end{proof}

\begin{lemma} \it{(Lower semi-continuity)} \label{thm:lower_CL}
For every $(\theta, g) \in L^1(I) \times L^2(I)$ and every sequence $(\theta_j, g_j) \in \mathcal{Y_C} $ such that $(\theta_j, g_j)$ converges to $(\theta, g) $ in $L^1(I)\times L^2(I)$ there holds
\[
       \liminf_{j \to \infty} G^C_{\e_j} (\theta_j, g_j) \geq G^C_0(\theta, g),
\]
and $
(\theta, g)  \in \mc{A_C}.$
\end{lemma}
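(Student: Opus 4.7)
The plan is to proceed in three main steps. First, we may assume without loss of generality that $\liminf_{j \to \infty} G^C_{\e_j}(\theta_j, g_j) < +\infty$, and extract a subsequence, not relabeled, along which the $\liminf$ is realized as a limit and the energies are uniformly bounded. Proposition~\ref{CL-compactness} applied to this subsequence yields a further subsequence converging in $L^1(I)\times L^2(I)$ to some limit in $\mc{A_C}$; since the original sequence already converges to $(\theta,g)$, the two limits coincide, so $(\theta,g)\in\mc{A_C}$.

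Next, I would invoke the classical Modica-Mortola pointwise inequality. Since $\Phi'(s)=2\sqrt{W(s)}$, the AM-GM inequality gives
\[
\e_j (\theta_j')^2 + \frac{1}{\e_j} W(\theta_j) \geq 2\sqrt{W(\theta_j)}\,|\theta_j'| = |(\Phi\circ\theta_j)'|,
\]
and dropping the remaining nonnegative contributions to $F^C_{\e_j}$ (the $D_1$, $D_2$ and $(g_j'+\sin\theta_j)^2$ terms) yields $G^C_{\e_j}(\theta_j,g_j) \geq \int_I |(w_j)'|\,dx$ with $w_j := \Phi \circ \theta_j$. Each $w_j$ is continuous on $\bar I$ with $w_j(-l)=w_j(l)$ by the periodic boundary condition built into $\mc{Y_C}$, and $w_j \to \Phi\circ\theta$ both a.e. and in $L^1(I)$ by continuity of $\Phi$, the a priori bound $|\theta_j| \leq \pi$, and dominated convergence.

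The hard part, which is the genuine new feature compared to standard Modica-Mortola on an interval, is to extract the boundary contribution $|\Phi(\ttheta(l))-\Phi(\ttheta(-l))|$ from the lower semi-continuity step. The idea is to identify $\bar I$ with the circle $S^1 := \bar I/\{-l\sim l\}$: each $w_j$, being continuous at the identification point, satisfies $TV_{S^1}(w_j)=\int_I |(w_j)'|\,dx$, whereas the limit $\Phi\circ\theta$, viewed on $S^1$, carries all the interior jumps of $\theta$ (with total cost $\int_I |(\Phi\circ\theta)'|$) together with, in general, a further jump at the identified endpoint of size $|\Phi(\ttheta(l))-\Phi(\ttheta(-l))|$, since the two one-sided traces of $\theta$ at $\pm l$ may differ. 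Applying the standard $L^1$-lower semi-continuity of total variation on $S^1$ (cf.~\cite{giusti1984minimal}) then gives
\[
\liminf_{j\to\infty} \int_I |(w_j)'|\,dx \geq TV_{S^1}(\Phi\circ\theta) = \int_I |(\Phi\circ\theta)'| + |\Phi(\ttheta(l))-\Phi(\ttheta(-l))|,
\]
which equals $G^C_0(\theta,g)$ by the definition of $G^C_0$ on $\mc{A_C}$. The subtle point is correctly attributing the periodic constraint on the approximating $w_j$'s to the emergence of the endpoint jump in the $BV(S^1)$ limit; once $S^1$ is chosen as the ambient space, the argument reduces to the standard BV lower semi-continuity.
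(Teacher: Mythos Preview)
Your argument is correct and follows the same overall scheme as the paper (reduce to finite energy, invoke compactness to place the limit in $\mc{A_C}$, apply the Modica--Mortola inequality, then extract the extra boundary term using the periodic constraint). The genuine difference lies in how the boundary contribution is captured. The paper follows the Owen--Rubinstein--Sternberg device from \cite{Sternberg90}: it extends $\theta_j$ and $\theta$ periodically to a larger interval $I_\delta=(-l-\delta,l+\delta)$, uses that the extended $\hat\theta_j$ have no jumps at $\pm l$ while the extended $\hat\theta$ does, and then applies ordinary BV lower semi-continuity on the real interval $I_\delta$; taking $\delta=l$ doubles everything and yields the factor $|\Phi(\ttheta(l))-\Phi(\ttheta(-l))|$. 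You instead quotient $\bar I$ to the circle $S^1$ and invoke lower semi-continuity of total variation directly on $S^1$, so that the endpoint mismatch of $\Phi\circ\theta$ appears as an honest jump in the limit. This is a cleaner and more intrinsic reformulation of exactly the same mechanism; the periodic-extension trick in the paper is in effect the unfolding of your $S^1$ picture onto a covering interval. One small point you glossed over: the compactness proposition gives convergence of $g_{j_k}$ only after subtracting its mean, so you should remark that since the full sequence $g_j\to g$ in $L^2$, the means converge as well, and adding a constant preserves membership in $\mc{A_C}$.
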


\begin{proof}
If $(\theta_j, g_j) \notin \mc{Y}$, then $G^C_{\e_j} (\theta_j, g_j) = \infty$ and the inequality is trivial. Therefore, we may consider $(\theta_j, j_j) \in \mc{Y_C}$ and $G^C_{\e_j} (\theta_j, g_j) \leq M$ for some constant $M$. By Proposition \ref{CL-compactness}, we may assume that $(\theta, g) \in \mc{A_C}$, i.e., $\theta \in BV(I; \{\pm \beta \}).$
The first term in the $\Gamma$-limit is the essential feature in the Modica-Mortola model. The second term arises due to the periodic boundary condition. The mass constraint is compatible with the $\Gamma$-limit, however, the boundary value is not compatible with the $\Gamma$-limit. By adding this term we may pass the periodic boundary condition to the $\Gamma$-limit. The proof is motivated by \cite{Sternberg90}.

Set $I_{\delta} = (-l-\delta  , l+ \delta)$ and extend $\theta_j$ and $\theta$ on $I_{\delta}$ by periodicity on $I_{\delta} - \bar{I}$, that is consider
$$
\htheta_j (x)= \begin{cases}
	  \theta_j(x+2l) \qquad & \mbox{if } x\in (-l-\delta,-l) \\
           \theta_j(x) \qquad &\mbox{if } x\in {I},\\
           \theta_j(x-2l) \qquad &\mbox{if } x\in (l,l+\delta) ,
           \end{cases}
$$
and
$$
\htheta (x)= \begin{cases}
	  \theta(x+2l) \qquad & \mbox{if } x\in (-l-\delta,-l) \\
           \theta(x) \qquad &\mbox{if } x\in {I},\\
           \theta(x-2l) \qquad &\mbox{if } x\in (l,l+\delta).
           \end{cases}
$$
Since $\theta \in BV(I_{\delta})$, the trace of $\theta$ at $\pm l$ can be defined, see \cite{giusti1984minimal}, as $\ttheta(l)\equiv \theta^-(l)$, where
\begin{equation}\label{thetaminus}
\theta^-(l)=\lim_{\rho \to 0^+} \frac 1\rho \int_{l-\rho}^l \theta(s) \, ds,
\end{equation}
and $\ttheta(-l)\equiv \theta^+(-l)$, where
\begin{equation}\label{thetaplus}
\theta^+(-l)=\lim_{\rho \to 0^+} \frac 1\rho \int_{-l}^{-l+\rho}\theta(s) \, ds.
\end{equation}
Hence, we have $\htheta \in BV(I_{\delta})$,  $\htheta_j$ converges to $\htheta$ in $L^1(I_{\delta})$, and
$$
\htheta^+(-l)=\ttheta(-l),\quad \htheta^-(-l)=\ttheta(l), \quad \htheta^+(l)=\ttheta(-l), \quad \htheta^-(l)=\ttheta(l).
$$
Additionally, using the fact that $\theta_j \in W^{1,2}(I)$ and $\theta_j(-l) = \theta_j(l)$, we see that
$$
\htheta_j^+(l)=\htheta_j^-(l)=\htheta_j^+(-l)=\htheta_j^-(-l).
$$
Therefore, for $0<\delta \leq l$ it holds
\begin{eqnarray*}
&&\, \nt \nt \nt \nt \nt \nt \nt \nt\nt \nt \nt \nt 2 \,  \liminf_{j \to \infty} G^C_{\e_j} (\theta_j, g_j) \geq \liminf_{j \to \infty} \left [ \int_{I} | (\Phi \circ \theta_j )'| +  \int_{I_\delta \setminus {\bar I}} | (\Phi \circ \theta_j )'|  \right] \qquad \\
    &=& \liminf_{j \to \infty} \left[ \int_{I_{\delta}} | (\Phi \circ \htheta_j )'| + \int_{I_{\delta} \setminus\bar{I}} | (\Phi \circ \htheta_j )'| \right. \\
    &\quad& \qquad \left.  + |\Phi(\htheta_j^+(r))- \Phi(\htheta_j^-(r))|         +    |\Phi(\htheta_j^+(-r))- \Phi(\htheta_j^-(-r))|               \right]\\
     &=& \liminf_{j \to \infty}  \int_{I_{\delta}} | (\Phi \circ \htheta_j )'| \\
      & \geq &  \int_{I_{\delta}} | (\Phi \circ \htheta )'|  =  \int_{I} | (\Phi \circ \theta )'|+ \int_{I_{\delta} \setminus\bar{I}} | (\Phi \circ \htheta)'|   \\
      & \quad &  \qquad +|\Phi(\htheta^+(l))- \Phi(\htheta^-(l))| +  |\Phi(\htheta^+(-l))- \Phi(\htheta^-(-l))|  \\
      &=& \int_{I} | (\Phi \circ \theta )'|+ \int_{I_{\delta} \setminus\bar{I}} | (\Phi \circ \htheta)'|   + 2\,  |\Phi(\ttheta(l))- \Phi(\ttheta(-l))|,
\end{eqnarray*}
and if we take $\delta= l$ we have
$
\, \displaystyle{2  \liminf_{j \to \infty} G^C_{\e_j} (\theta_j, g_j) \geq 2\,  G^C_0(\theta, g).}
$
The feature (\ref{jumps_CL}) can be obtained by the same proof of the lower bound for the Modica-Mortola model: From $\theta \in BV(I;\pm \{\beta\})$, we may assume that $\theta$ has internal jumps at $t_j$ for $j = 1, 2, \cdots, m$. Then, for $\delta>0$ small enough, it holds
\[
G^C_{\e_j} (\theta_j, g_j) \geq \sum_{i=1}^m \int_{t_i-\delta}^{t_i+ \delta} e_{\e_j}(\theta_j, g_j)\, dx 
\]
where $e_{\e_j}$ is the integral density of $F^C_{\e_j}$. We consider one term, assuming $t_i=0$ and $\theta_j(\pm\delta) \to \pm \beta$. Making the change of  variables, $\bar{x} = \frac{x}{\e_j}, \bar{\theta}_j(\bar{x}) = \theta_j(x), \bar{g}_j(\bar{x}) = g_j(x)$, we have
\begin{eqnarray*}
&&\int_{-\delta}^{ \delta} e_{\e_j}(\theta_j, g_j)\, dx \geq 2\, \int_{-\frac{\delta}{\e_j}}^{\frac{\delta}{\e_j}} \sqrt{W(\bar{\theta}_j)} \,  |\bar{\theta}_j'| \, d \bar{x} \geq 2 \int_{\ttheta_j(-\delta/\e_j)}^{\ttheta_j(\delta/\e_j)} \sqrt{W(t)} \,  dt\\
&& = 2 \int_{\theta_j(-\delta)}^{\theta_j(\delta)} \sqrt{W(t)} \,  dt \longrightarrow 2\int_{-\beta}^{\beta}\sqrt{W(t)}\, dt = \Phi(\beta).
\end{eqnarray*}
\end{proof}
Next, we derive the upper bound inequality.

\begin{lemma} \it{(Construction)} \label{thm:upper_CL}
For any $(\theta, g) \in L^1(I) \times L^2(I)$, there exists a sequence $(\theta_j, g_j) \in \mathcal{Y_C}  $, converging in $L^1(I) \times L^2(I) $ as $j \to \infty$, to $(\theta, g)$, and such that
\[
  \limsup_{j \to \infty} G^C_{\e_j} (\theta_j,g_j) \leq G^C_0(\theta, g).
\]
\end{lemma}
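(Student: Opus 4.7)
The plan is to imitate the classical Modica-Mortola recovery sequence, taking advantage of the observation that defining $g_j$ directly from $\theta_j$ via $g_j' = -\sin \theta_j$ forces the fourth-order, quartic, and coupling terms of $F^C_{\e_j}$ to cancel identically, since then both $g_j' + \sin\theta_j \equiv 0$ and $g_j'' + \cos\theta_j\,\theta_j' \equiv 0$.  If $(\theta, g) \notin \mc{A_C}$ then $G^C_0(\theta, g) = +\infty$ and any sequence in $\mc{Y_C}$ converging to $(\theta, g)$ in $L^1(I)\times L^2(I)$ (which exists by density of smooth periodic functions) satisfies the conclusion vacuously, so the real content is the case $(\theta, g) \in \mc{A_C}$, in which $\theta \in BV(I;\{\pm\beta\})$ has finitely many internal jumps at $-l < t_1 < \cdots < t_m < l$, with $\int_I \theta = 0$ and $g' = -\sin\theta$ almost everywhere.

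For the construction, I would fix a vanishing scale $r_j \to 0$ with $r_j/\e_j \to \infty$ and take the standard one-dimensional profile $\chi$ solving $\chi' = \sqrt{W(\chi)}$ with $\chi(\pm\infty) = \pm \beta$, suitably truncated outside a compact interval.  At each interior jump $t_i$, set $\theta_j(x) = \pm \chi((x-t_i)/\e_j)$ on $(t_i - r_j, t_i + r_j)$, with the sign matching the jump direction of $\theta$, and $\theta_j = \theta$ elsewhere in $I$; if in addition $\ttheta(-l) \neq \ttheta(l)$, install an analogous transition straddling the torus identification inside $[-l,-l+r_j]\cup[l-r_j,l]$.  Then define $g_j(x) := c_j - \int_{-l}^{x}\sin\theta_j(s)\,ds$ with $c_j$ chosen so that $g_j\to g$ in $L^2(I)$; by construction the integrand of $F^C_{\e_j}(\theta_j,g_j)$ collapses pointwise to $\e_j(\theta_j')^2 + \e_j^{-1}W(\theta_j)$, and the classical rescaling argument gives, for each interior transition and the possible boundary transition,
\[
\int_{t_i-r_j}^{t_i+r_j}\!\Big(\e_j(\theta_j')^2 + \frac{1}{\e_j}W(\theta_j)\Big)\,dx \longrightarrow 2\int_{-\beta}^{\beta}\sqrt{W(t)}\,dt = \Phi(\beta).
\]
Summing these contributions and invoking (\ref{jumps_CL}) produces exactly $\limsup_j F^C_{\e_j}(\theta_j,g_j) \le G^C_0(\theta, g)$.

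The main obstacle is making the pair $(\theta_j, g_j)$ genuinely admissible in $\mc{Y_C}$.  Periodicity of $\theta_j$ is built into the boundary-transition design, and $g_j'(-l) = g_j'(l)$ reduces to $\sin\theta_j(-l) = \sin\theta_j(l)$ and is therefore automatic.  The nontrivial constraint is $g_j(-l) = g_j(l)$, equivalently $\int_I \sin\theta_j\,dx = 0$, which does not follow directly from $\int_I \theta = 0$.  I plan to enforce it by a single $o(1)$ horizontal shift of one of the transition layers: the map sending the location of, say, the first interior transition to $\int_I \sin\theta_j\,dx$ is continuous in the shift parameter and changes sign as that transition slides toward either of its two neighbors, so the intermediate value theorem produces a shifted position at which the integral vanishes exactly.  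Since the shift is $o(1)$ it affects neither the $L^1$-limit of $\theta_j$ nor the leading-order energy, and the Dominated Convergence theorem then delivers both $(\theta_j, g_j) \to (\theta, g)$ in $L^1(I) \times L^2(I)$ and the desired $\limsup$ bound.
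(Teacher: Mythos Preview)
Your proposal is essentially correct and follows the same strategy as the paper: reduce to $(\theta,g)\in\mc{A_C}$, build $\theta_j$ by a Modica--Mortola transition profile at each jump (including a boundary layer when $\ttheta(l)\neq\ttheta(-l)$), define $g_j$ as an antiderivative of $-\sin\theta_j$ so that all the $D_1$, $D_2$, and coupling terms vanish identically, and enforce $\int_I\sin\theta_j=0$ via an intermediate-value shift of the profile so that $g_j$ is periodic. The paper differs only in implementation: it uses the regularized profile $\phi_\e=\psi_\e^{-1}$ with $\psi_\e(t)=\int_{-\beta}^{t}(\e^2/(\e+W))^{1/2}$ rather than the truncated heteroclinic, it applies a single global shift $\delta_\e$ of the signed-distance function (so all transitions move together) rather than displacing one layer, and in the boundary case it literally translates the problem on the torus to convert the boundary jump into an interior one before constructing. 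These variants are interchangeable; both yield $\Phi(\beta)$ per transition and an $O(\e_j)$ shift, so the energy and the $L^1\times L^2$ limits are unaffected.
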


\begin{proof}
If $G^C_0(\theta, g) = \infty$, the result is trivial. Assume that $(\theta, g) \in \mc{A_C}$.
The sequence $\theta_j$ is obtained via the well-known Modica-Mortola construction, see \cite{Modica1987}, hence in the following we will omit the details and provide only the essential modifications.
If $\ttheta(l)=\ttheta(-l)$, then there are only internal transitions, which given the condition $\int_I \theta(s) \, ds =0$ it means there are at least two of them. We follow \cite{Modica1987}, and introduce the set $A=\{t\in I : \theta(t)=-\beta\}$, as well as the functions
\begin{eqnarray}\label{distance_CL}
h (x)= \begin{cases}
	 - \mbox{dist}(x,\partial A) \qquad & \mbox{if } x\in A \\
          \quad \mbox{dist}(x,\partial A) \qquad & \mbox{if } x\notin  A,
           \end{cases}
\end{eqnarray}
and
\begin{eqnarray}\label{jump}
\chi_0(t)= \begin{cases}
	 - \beta \qquad & \mbox{if } t<0 \\
          \quad \beta \qquad & \mbox{if } t\geq 0.
           \end{cases}
\end{eqnarray}
Note that  around a jump we have $\theta(x)=\chi_0(h(x))$. The next step is to consider
\begin{equation}
\psi_{\e}=\int_{-\beta}^t \left( \frac{\e^2}{ \e+ W(s)}\right)^{\frac 12} \, ds,
\end{equation}
which  has a well-defined inverse function $\phi_\e:[0,\eta_\e] \to [-\beta,\beta]$, here  $\eta_\e=\psi_{\e}(\beta) \leq 2\e^{1/2} \beta$, and that can be smoothly extended outside the interval $[0,\eta_{\e}]$ to $-\beta $ for $t<0$, and $\beta$ if $t\geq \eta_\e$. By construction, for every $t$ we have $\phi_\e(t)\leq \chi_0(t)$, and $ \phi_\e(t+\eta_\e)\geq \chi_0(t)$, and since $0<\beta<\frac \pi 2$ (see subsection~\ref{CL:well}), we also have $\sin(\phi_\e(t))\leq \sin(\chi_0(t))$ and $\sin(\phi_\e(t+\eta_\e))\geq \sin(\chi_0(t))$, Therefore, we can find a $\delta_\e\in [0,\eta_\e]$ such that
\begin{equation}
\int_I \sin(\phi_\e(h(x)+\delta_\e) ) \, dx = \int_I \sin(\chi_0(h(x))) \, ds.
\end{equation}

Now, using this construction around each transition point $t_i$ of $\theta$, because $\eta_\e \leq 2\e^{1/2} \beta$ and $\int_I \sin(\theta(s)) \, ds=0$,  for $\e_j$ small enough, we can obtain a sequence of $\theta_j$ with
\begin{equation*}
\ttheta_j(-l)=\ttheta_j(r)\,  \mbox{ and } \,  \int_I \sin(\theta_j(s)) \, ds =0,
\end{equation*}
and which converges to $\theta$ in $L^1(I)$. Additionally, for every $i$, we have
\[
\limsup_{j \to \infty} \int_{t_i-\delta_{\e_j}}^{t_i+ \eta_{\e_j}- \delta_{\e_j}} \left(\e_j \theta_j'^2 + \frac{1}{ \e_j} W(\theta_j ) \right) \leq 2\int_{-\beta}^{\beta} W(t) \, dt = \Phi(\beta).
\]
If $\ttheta(l)\neq \ttheta(-l)$, say $\ttheta(-l)=-\beta$, a boundary layer either at $l$ or $-l$ must occur due to the periodic boundary condition for $\theta_j$, and the construction requires some small changes. By the definition of trace given in (\ref{thetaminus})-(\ref{thetaplus}), we can find a small $\delta$ such that $\ttheta(-l+\delta)= -\beta$, and there are no jumps of $\theta$ in $(-l, -l+\delta)$. Consider the function
\begin{eqnarray}\label{translate}
\theta_\delta(t)= \begin{cases}
	 \theta (t)\qquad & \mbox{if } -l+\delta<t<l \\
           \theta(t-2l) \qquad & \mbox{if } l<t<l+\delta,
           \end{cases}
\end{eqnarray}
and repeat the previous construction for $\theta_\delta$ on $(-l+\delta, l+\delta)$, calling the corresponding functions $(\theta_\delta)_j$, we have that
\begin{eqnarray}\label{translateback}
\theta_j(t)= \begin{cases}
	   (\theta_\delta)_j(t+2l) \qquad & \mbox{if } -l<t<-l+\delta \\
         (\theta_\delta)_j(t)\ \qquad & \mbox{if } -l+\delta<t<l,
           \end{cases}
\end{eqnarray}
belongs to $\mathcal{Y_C}$, for $\e_j>0$ small enough, and
\begin{eqnarray*}
&&\limsup_{j \to \infty} \int_I\left(\e \theta_j'^2 + \frac{1}{ \e} W(\theta_j ) \right) = \limsup_{j \to \infty} \int_{-l+\delta}^{l+\delta} \left(\e (\theta_\delta)_j'^2 + \frac{1}{\e} W(\theta_\delta)_j  \right)\\
 && \qquad  \leq  \Phi(\beta) ( \mbox{number of interior jumps of $\theta$})+ \Phi(\beta)\\
&& \qquad = \Phi(\beta) (\mbox{number of interior jumps of $\theta$}) + |\Phi(-\beta) - \Phi(\beta)| = G^C_0(\theta, g)
\end{eqnarray*}
Finally, since $\theta_j \in W^{1,2}(I)$, we define
\[
g_j(x) = - \int_{-l} ^x \sin \theta_j(t) \, dt,
\]
then {$g_j \in W^{2,2}(I)$}, $g_j(-l)=g_j(l)$, and for the sequence $(\theta_j,g_j)$ we have the desired upper bound inequality.
\end{proof}

\begin{remark}\label{Chen-Lub_remark}
We note that the analysis for the flat torus of section \ref{section:deGennes} can be also applied to the Chen-Lubensky energy for $\bn$ over $\mathbb{S}^2$ in two dimensions. Setting $\vp = z - g(x,y)$, $\bn = \bn(x,y)$ and $\bn_{\parallel} = (n_1, n_2)$, the energy (\ref{CL}) becomes
\begin{eqnarray*}
 &&\int_{\omm} \Big(  D_1 \e (\Delta g + \nb \cdot \bn)^2 + \frac{D_2}{2 \e} \left(|\nb g +\bn_{\parallel} |^4 + 2 (1-n_3)^2 |\nb g + \bn_{\parallel}|^2 \right)  \nonumber\\
&& \qquad \quad  + \frac{1}{\e} |\nb g +\bn_{\parallel} |^2  +\e |\nb \bn|^2 + W(\bn) \Big) \, dx dy, \nonumber
\end{eqnarray*}
with potential $W$ given by
\[
W(\bn ) =\frac{ D_2}{2}(1-n_3)^4 + (1-n_3)^2 + \sigma (n_3^2 + n_2^2) + b_0.
\]
Here, as in the one dimensional case, $b_0$ can be chosen so to ensure that $W$ is nonnegative. Following the calculations of subsection \ref{CL:well}, we can see that $W(\bn)$ is also a double well potential with two zeros $\bn^{\pm} = (\pm \bar{n}_1, 0, B)$, where
\begin{equation}\label{B_Chen}
B = 1 - 2 \sqrt{\frac{1+ \sigma}{3 D_2}} \sinh \left[\frac{1}{3} \arsinh \left( \frac{3 \sigma}{2(1+ \sigma)}\sqrt{\frac{3 D_2}{1+\sigma}}\right) \right]
\end{equation}
and $B(\sigma, D_2) \to A = (1+\sigma)^{-1}$ as $D_2 \to 0^+$. Then, the same proofs of section \ref{section:deGennes} give that the $\Gamma$-limit (\ref{G_0}) established for the de Gennes  energy is also the $\Gamma$-limit of the Chen-Lubensky model, with $A=(1+\sigma)^{-1}$ replaced by $B$.

\end{remark}

\section{Numerical Simulations} \label{section:numerics}
We consider the gradient flow (in $L^2)$ of the energy (\ref{energy-numerics}) (up to a multiplicative constant):
\[
 F(\bn, \phi) = \frac{1}{2} \int_{\Omega} \left \{ \frac{1}{\e} |\nabla \phi-\bn|^2+\e |\nabla \bn|^2 - \tau (\bn \cdot \bh)^2\right \},
\]
where $\omm = (-l, l)^2 \times (-1, 1) $ and study the behavior of the solutions with Dirichlet boundary conditions for both $\bn$ and $\phi$ on the top and bottom plates. For the sawtooth undulation, periodic boundary conditions are imposed for both $\bn$ and $\phi$ in the $x$ and $y$ directions. The gradient flow equations are
\begin{eqnarray}
\fr{\pa \phi}{\pa t} &=& \frac{1}{\e} \left( \Delta \phi - \nb \cdot \bn \right),  \nonumber\\
 [-1.5ex] \label{GradientFlow}\\[-1.5ex]
\fr{\pa \bn}{\pa t}  &=& \Pi_{\bn} \left( \e \Delta \bn + \frac{1}{\e} \left(\nb \phi - \bn\right) + \tau (\bn \cdot \bh) \bh \right), \nonumber
\end{eqnarray}
where we have defined, for a given vector $f \in \mathbb{R}^3$, the orthogonal projection onto the plane orthogonal to the vector $\bn$ as
\[
 \Pi_n(f) = f-(\bn \cdot f) \bn.
\]
This projection appears as a result of the constraint $\bn \in \mathbb{S}^2$.

As initial condition, we consider a small perturbation from the undeformed state. More precisely, for all $(x,y,z) \in \omm$,
\begin{eqnarray*}
\bn (x,y,z,0) & = &\frac{(\epsilon u_1, \epsilon u_2, 1+ \epsilon u_3)}{|(\epsilon u_1, \epsilon u_2, 1+ \epsilon u_3) |}, \\
\phi (x,y,z,0) &= & z + \epsilon \phi_0,
\end{eqnarray*}
where a small number $\epsilon = 0.1$, $u_1, u_2,u_3$ and $ \phi_0$ are arbitrarily chosen.
We impose strong anchoring condition for the director field, and a Dirichlet boundary condition on $\phi$ at the top and the bottom plates, that is
\[
\bn(x,y,\pm 1, t)=  \mathbf{e}_3, \quad \mbox{and} \quad \phi(x,y,\pm 1, t) = z,
\]
for all t.

 We use a Fourier spectral discretization in the $x$ and $y$ directions, and second order finite differences in the $z$ direction. The fast Fourier transform is computed using the FFTW libraries \cite{F-J}. For the temporal discretization, we combine a projection method for the variable $\bn$ \cite{E-W}, with a semi-implicit scheme for $\phi$. We take $l =4, \e = 0.2$ and 128 gridpoints in the $x$ and $y$ direction, which ensures that the transition layers are accurately resolved.

We have solved this system in \cite{G-J2} for the study of layer undulation phenomena in a two dimensional domain, $\omm = (-l,l) \times (-1,1)$ and $\bn \in \mathbb{S}^1$. We also proved that the layer undulation occurs at $\tau = \mathcal{O}(1)$ as the first instability in \cite{G-J2}. In fact, the critical fields for undulational instability are $\pi$ and $1$, when Dirichlet and natural boundary conditions are imposed on $\phi$ at $z= \pm 1$, respectively. Here we consider a three dimensional domain with $\bn \in \mathbb{S}^2$. One can show that the same estimate of the critical field and description of the layer undulations can be obtained for the three dimensional case. More detailed analysis with various magnetic fields in a three dimensional domain will appear in a future publication. In Figure \ref{fig:undulations3D} we illustrate the formation of layer undulations, and confirm that the layer undulations occur at $\tau = \pi$. Numerical simulations show that the undeformed state ($\bn = \mathbf{e}_3, \, \phi= z$) is an equilibrium state at $\tau = 3$ and undulations appear at $\tau = 3.2$. Then the sinusoidal oscillation transforms into a chevron structure at much stronger fields, as shown in Figure \ref{fig:chevron3D}.

 In Figure \ref{fig:chevron3D} we depict the configurations of each component of $\bn$ and surface of $\phi$ in the middle of the domain, $z=0$. The pictures clearly show the zigzag pattern of the director. The directors and the layers are illustrated with various field strengths in Figures \ref{fig:undulations3D} and \ref{fig:chevron3D}. {One can notice from Figure~ \ref{fig:chevron3D} that the transition paths connecting $\bn^{+}$ and $\bn^-$ do not depend on $n_2$. This is consistent with the explicit form (\ref{geodesic-curve}) of the d-geodesic curve, which is introduced and proved in appendix~\ref{geodesics}.} It also indicates that the period becomes larger as the field strength increases. In fact, the numerical simulations show that the increase of the wavelength occurs simultaneously with the evolution of the chevron pattern, which is also observed in the experiment \cite{ishikawa2001defects, S-S-L}.

\begin{figure}
\centering
    \begin{tabular}{cc}
       \epsfig{file=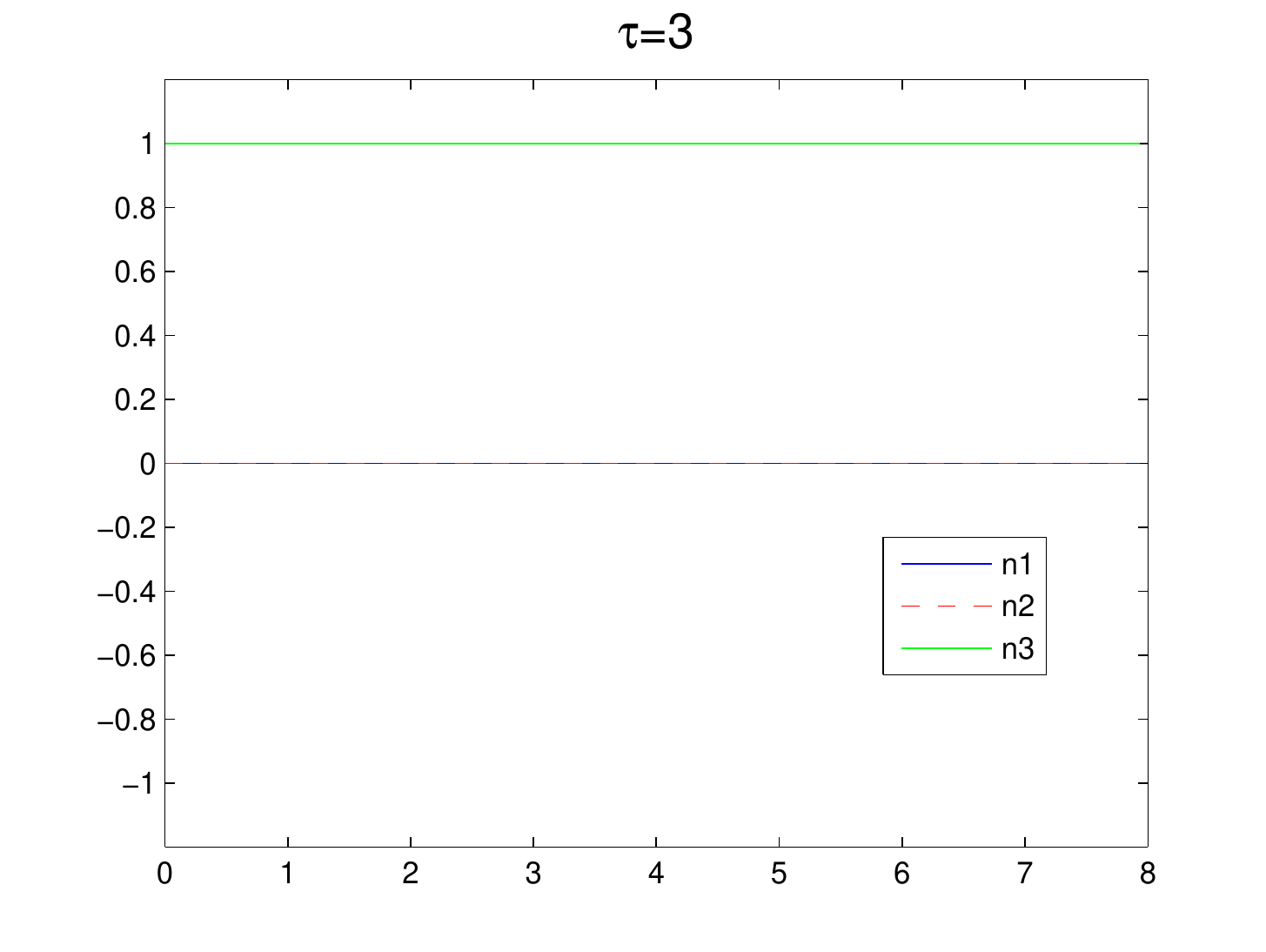,width=0.44\linewidth,clip=} \label{director_h3}&
       \epsfig{file=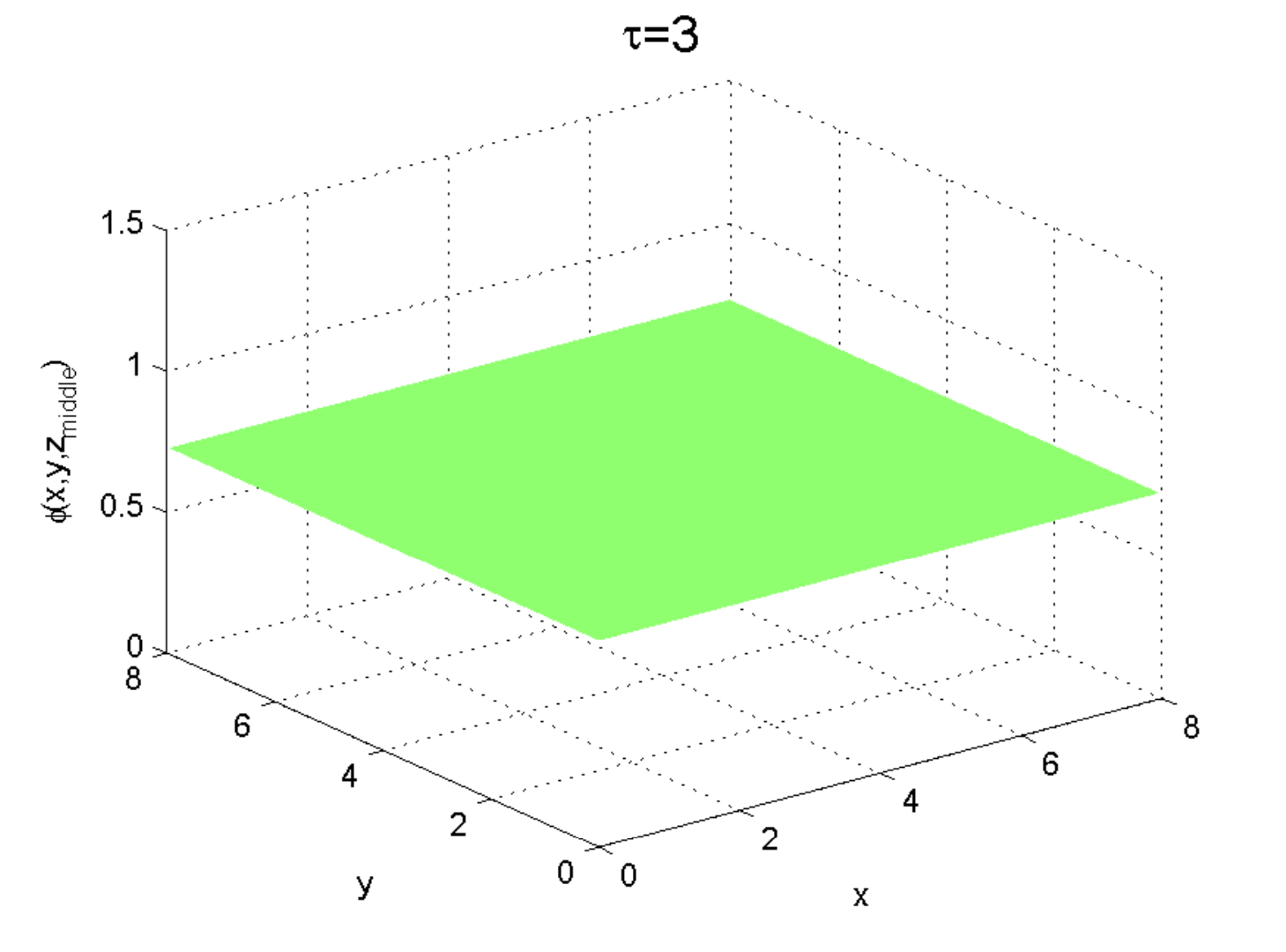,width=0.44\linewidth,clip=} \label{surface_h3}\\
        \epsfig{file=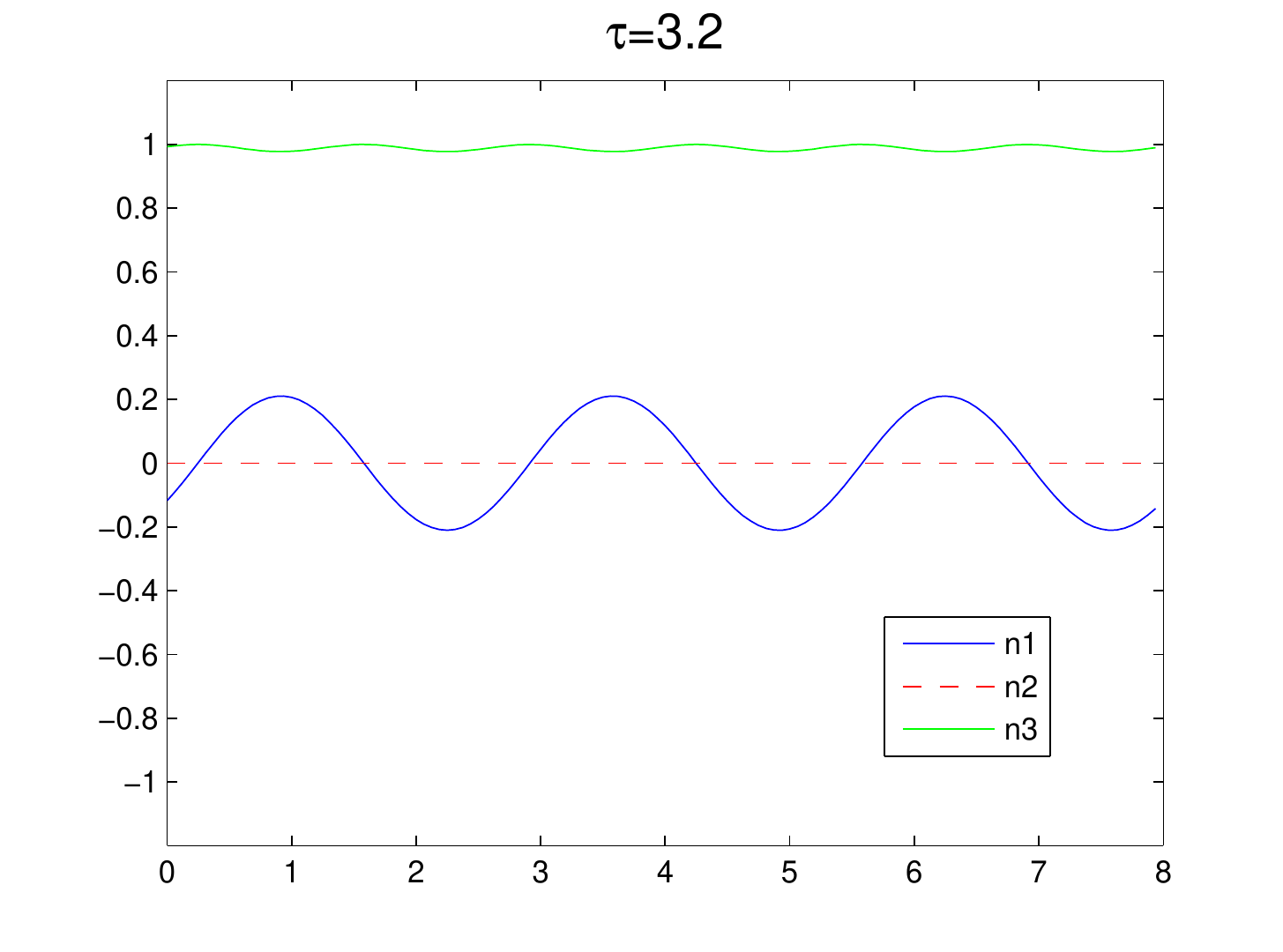,width=0.44\linewidth,clip=} \label{director_h3dot2}&
        \epsfig{file=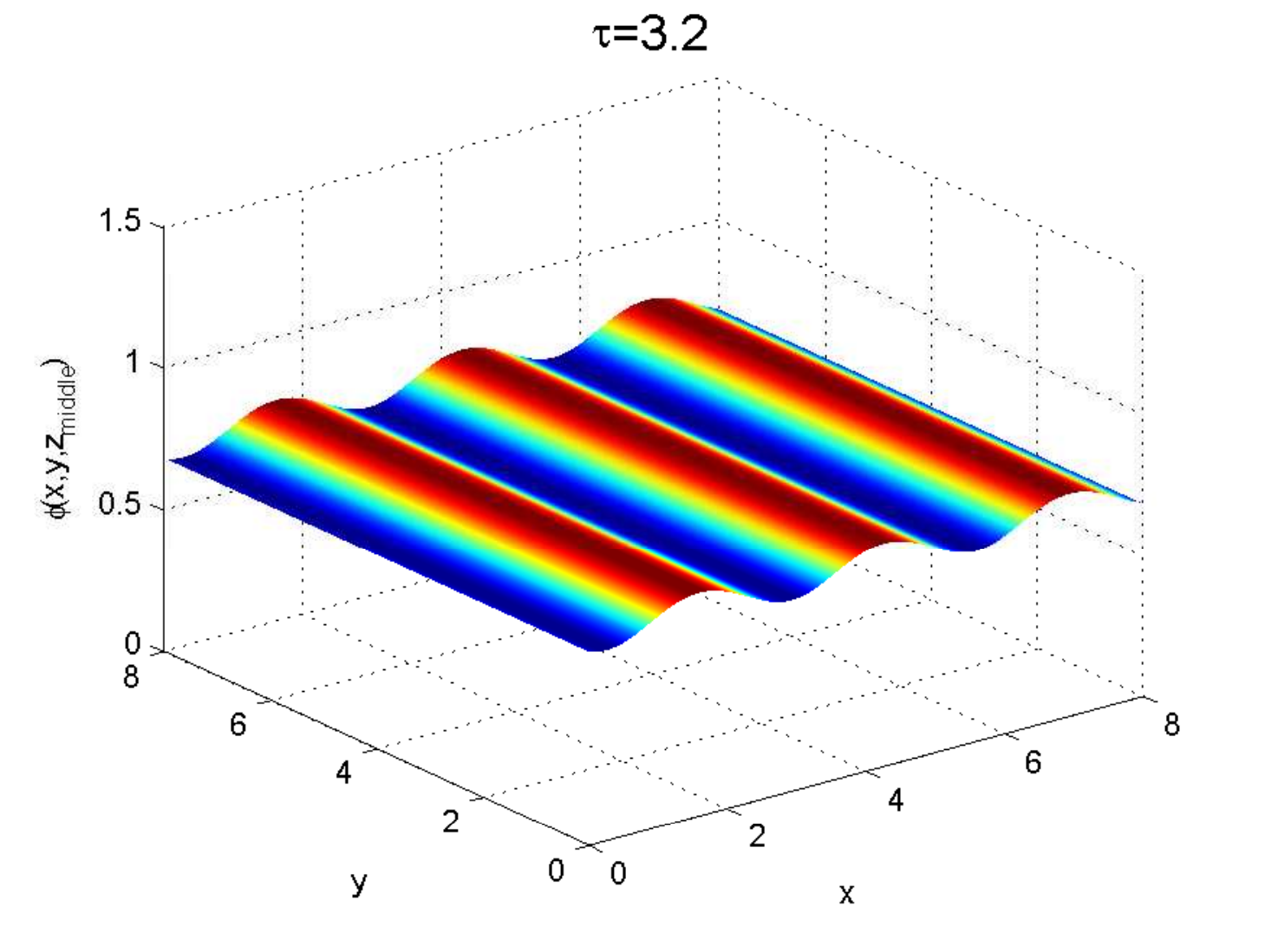,width=0.44\linewidth,clip=} \label{surface_h3dot2}\\
        \epsfig{file=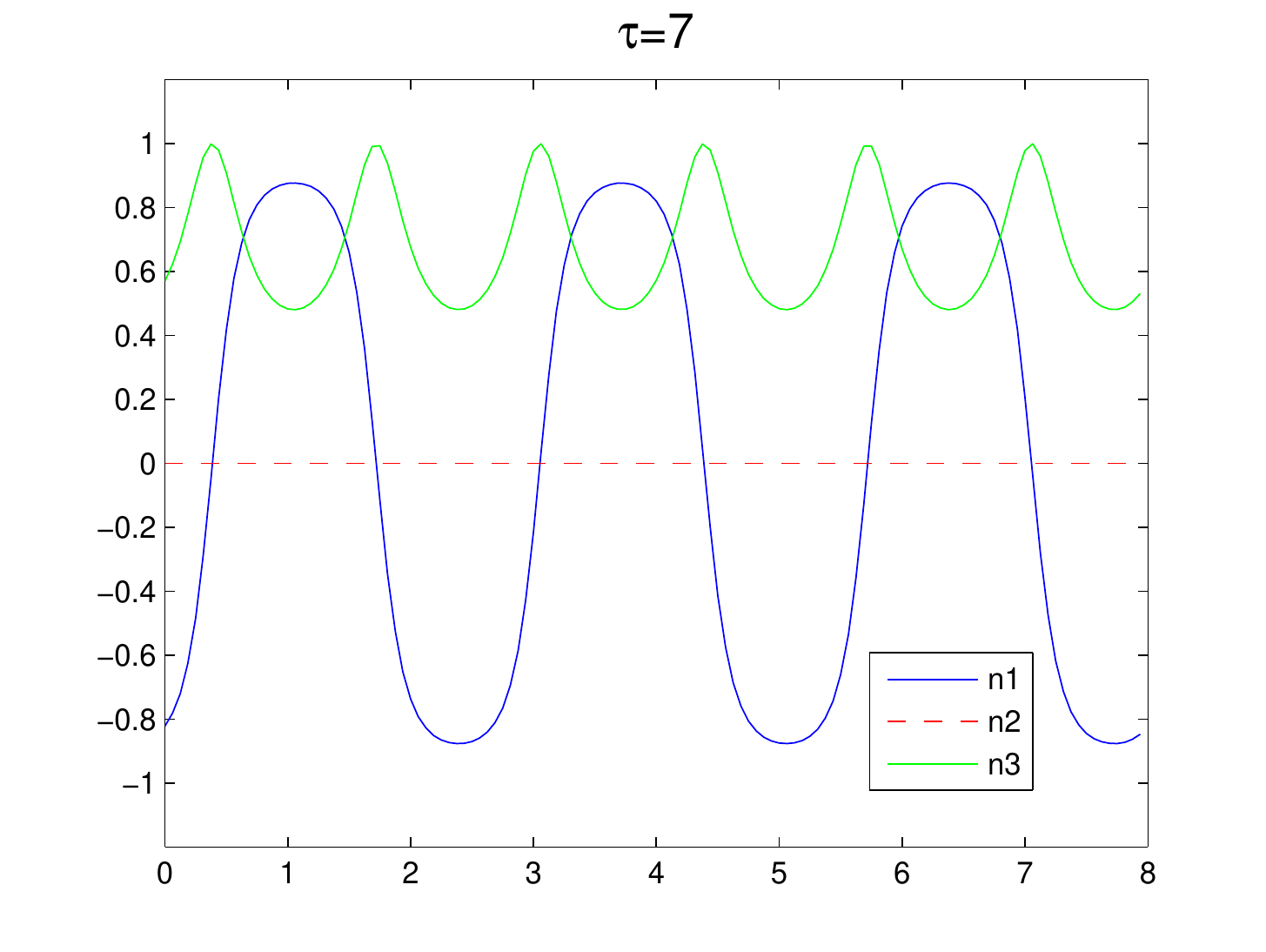,width=0.44\linewidth,clip=} \label{director_h7}&
       \epsfig{file=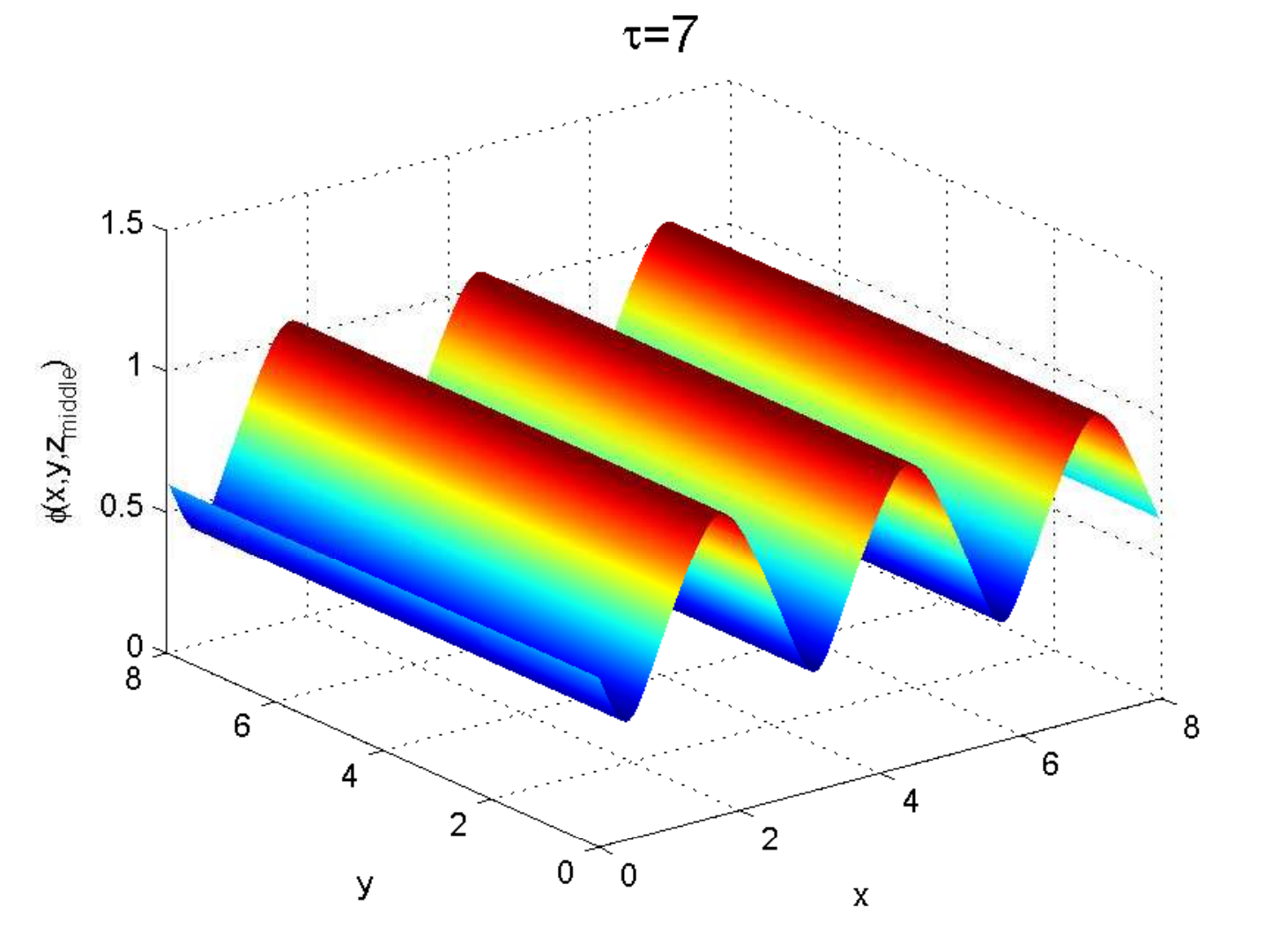,width=0.44\linewidth,clip=} \label{surface_h7}\\
    \end{tabular}
    \caption{Undulations: Numerical solution of (\ref{GradientFlow}) with strong anchoring conditions on the bounding plates. The first and second columns depict scalar components of directors and surface of the layer in the middle of the cell, respectively. The magnetic field strength $\tau = 3, 3.2, 7$ for each row.}
    \label{fig:undulations3D}
\end{figure}

\begin{figure}
\centering
    \begin{tabular}{cc}
        \epsfig{file= 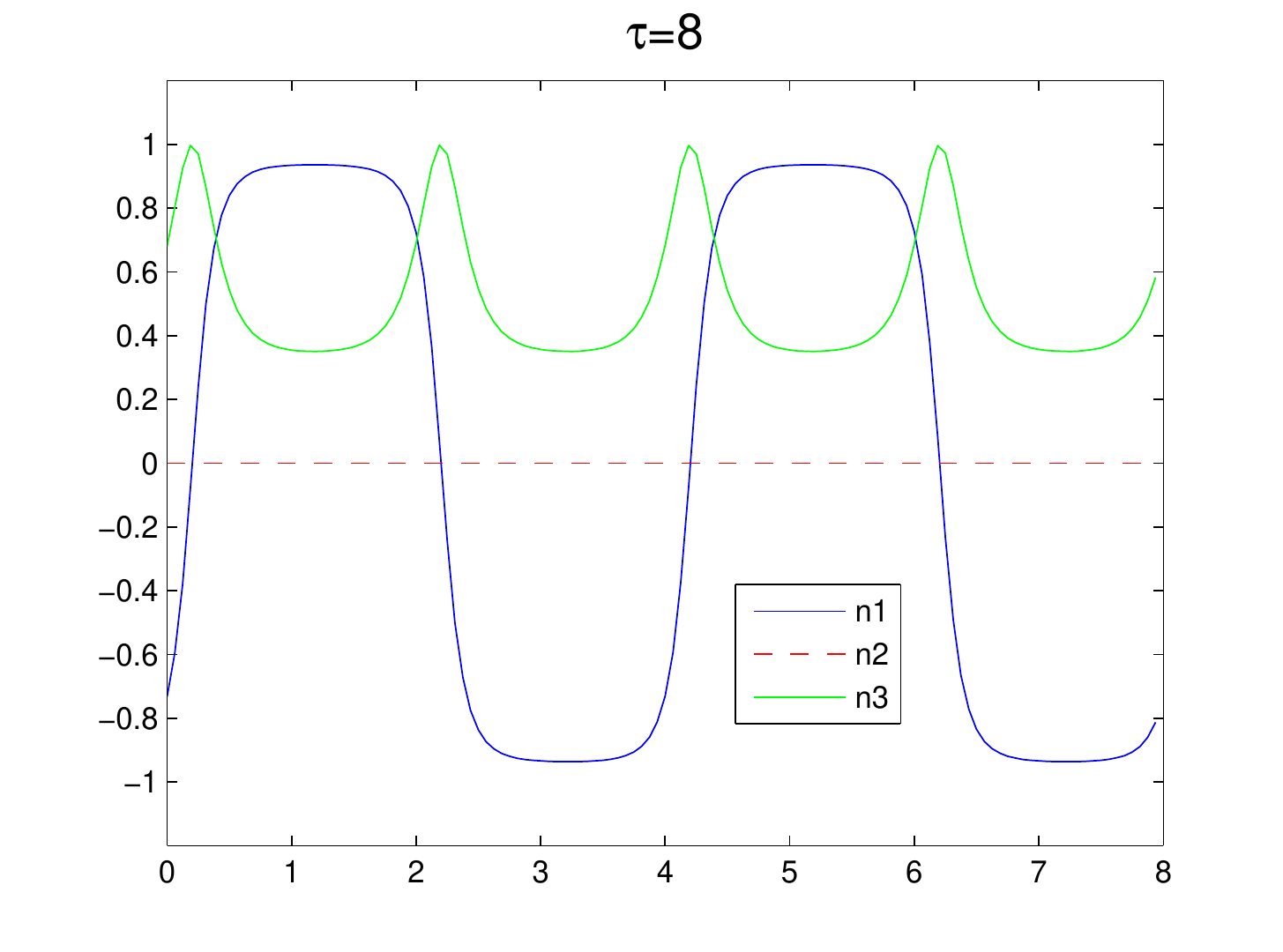,width=0.44\linewidth,clip=} \label{director_h8}&
         \epsfig{file=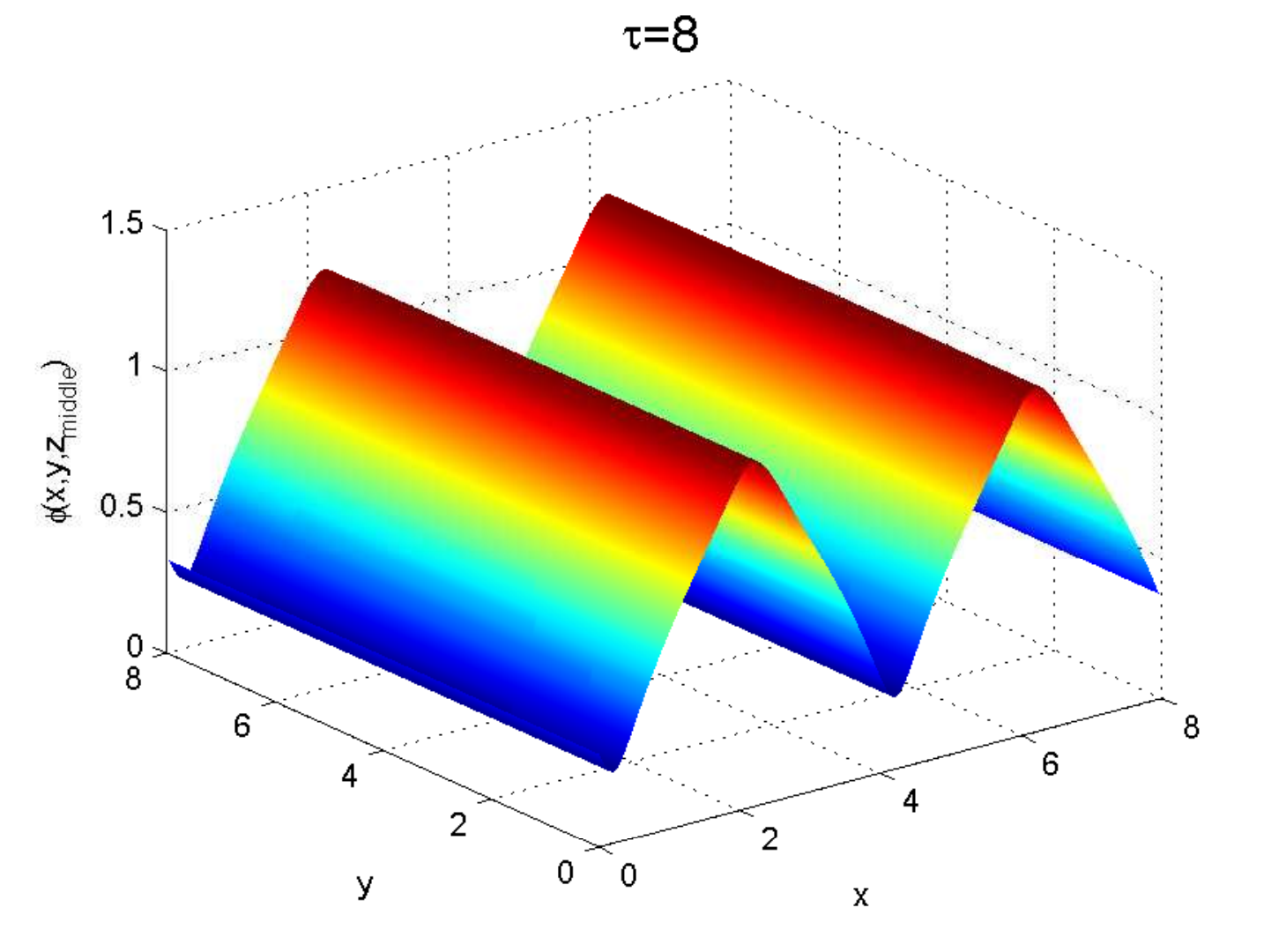,width=0.44\linewidth,clip=} \label{surface_h8}\\
        \epsfig{file=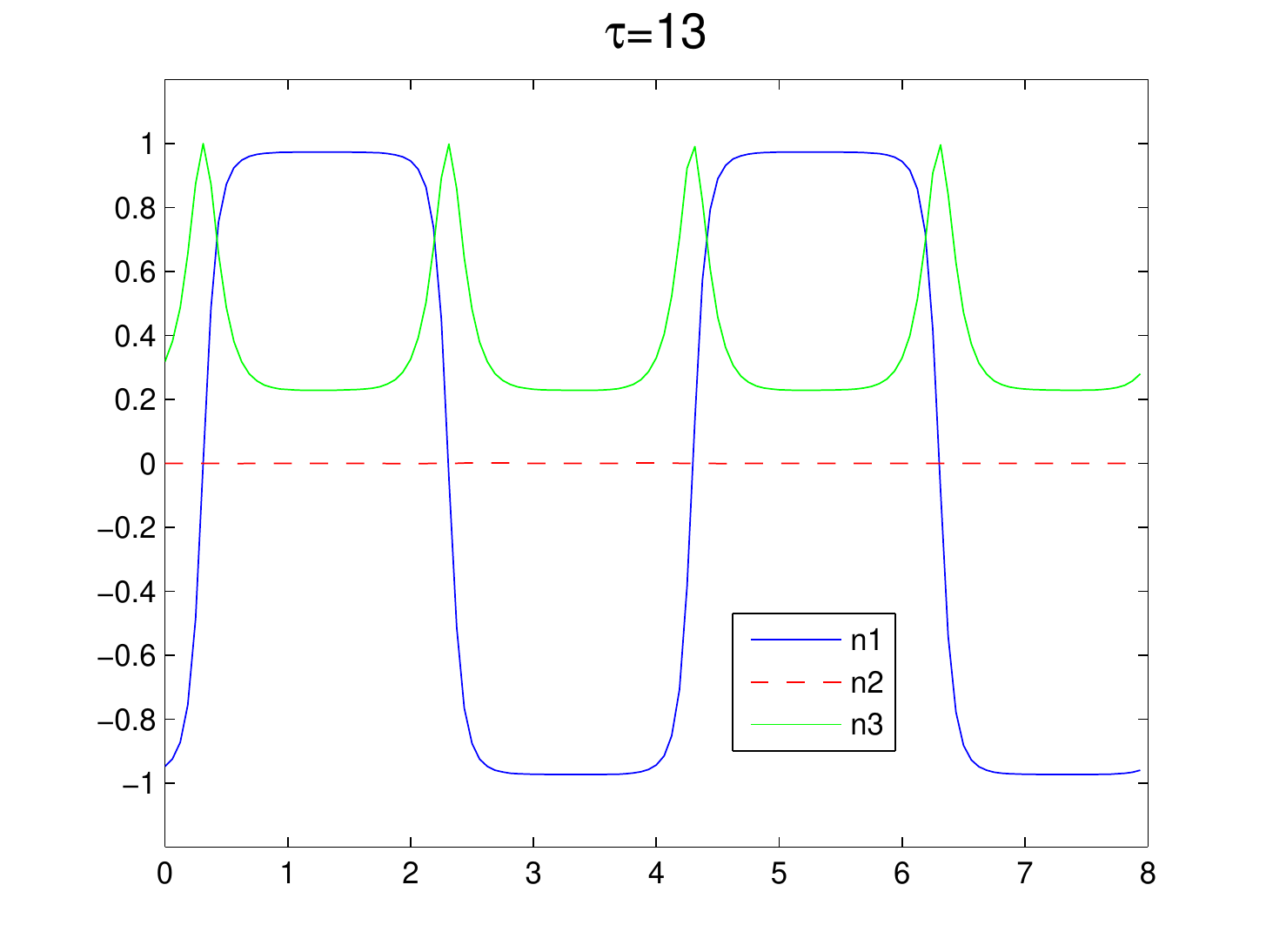,width=0.44\linewidth,clip=} \label{director_h13}&
        \epsfig{file=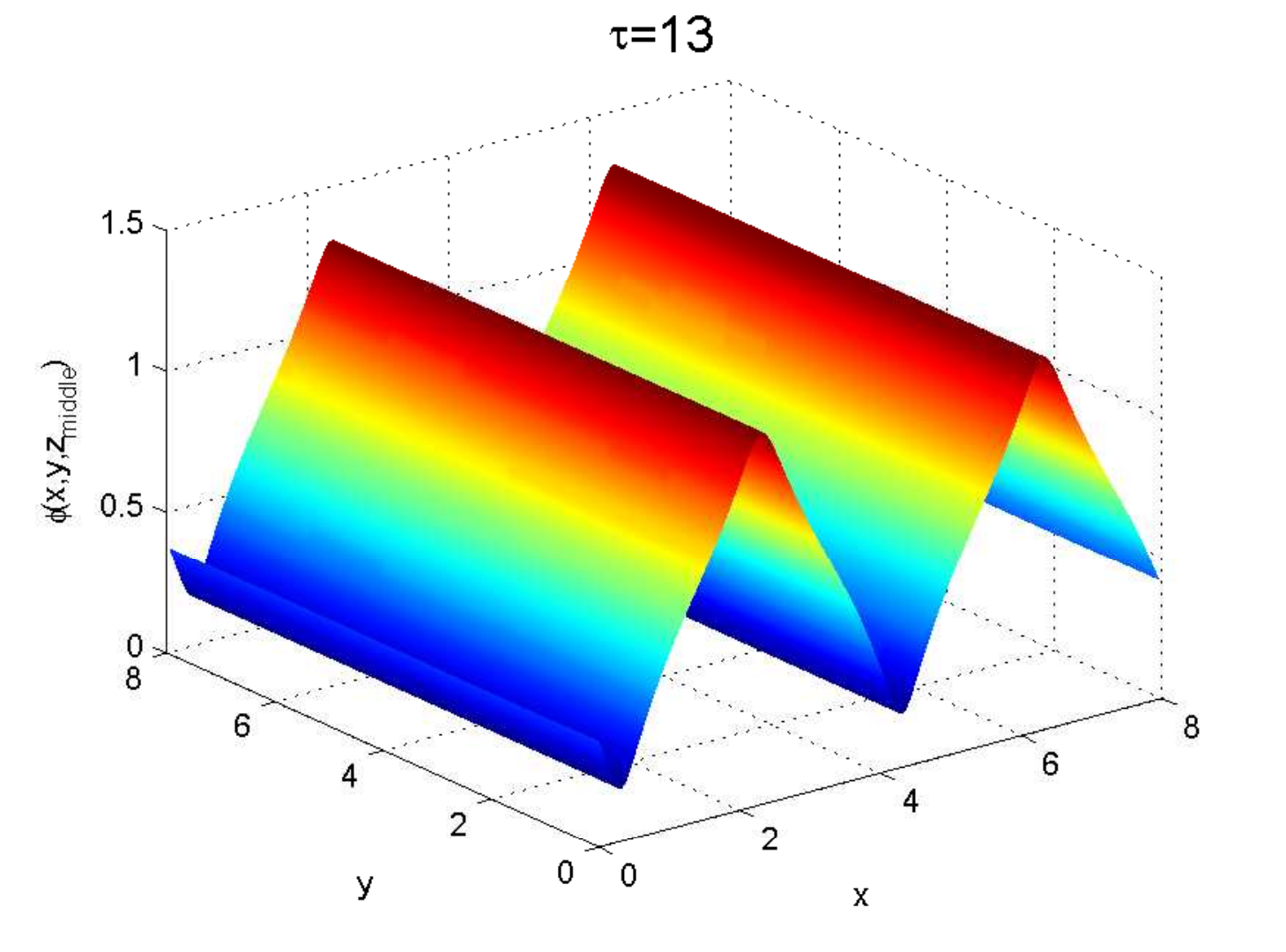,width=0.44\linewidth,clip=} \label{surface_h13}\\
        \epsfig{file=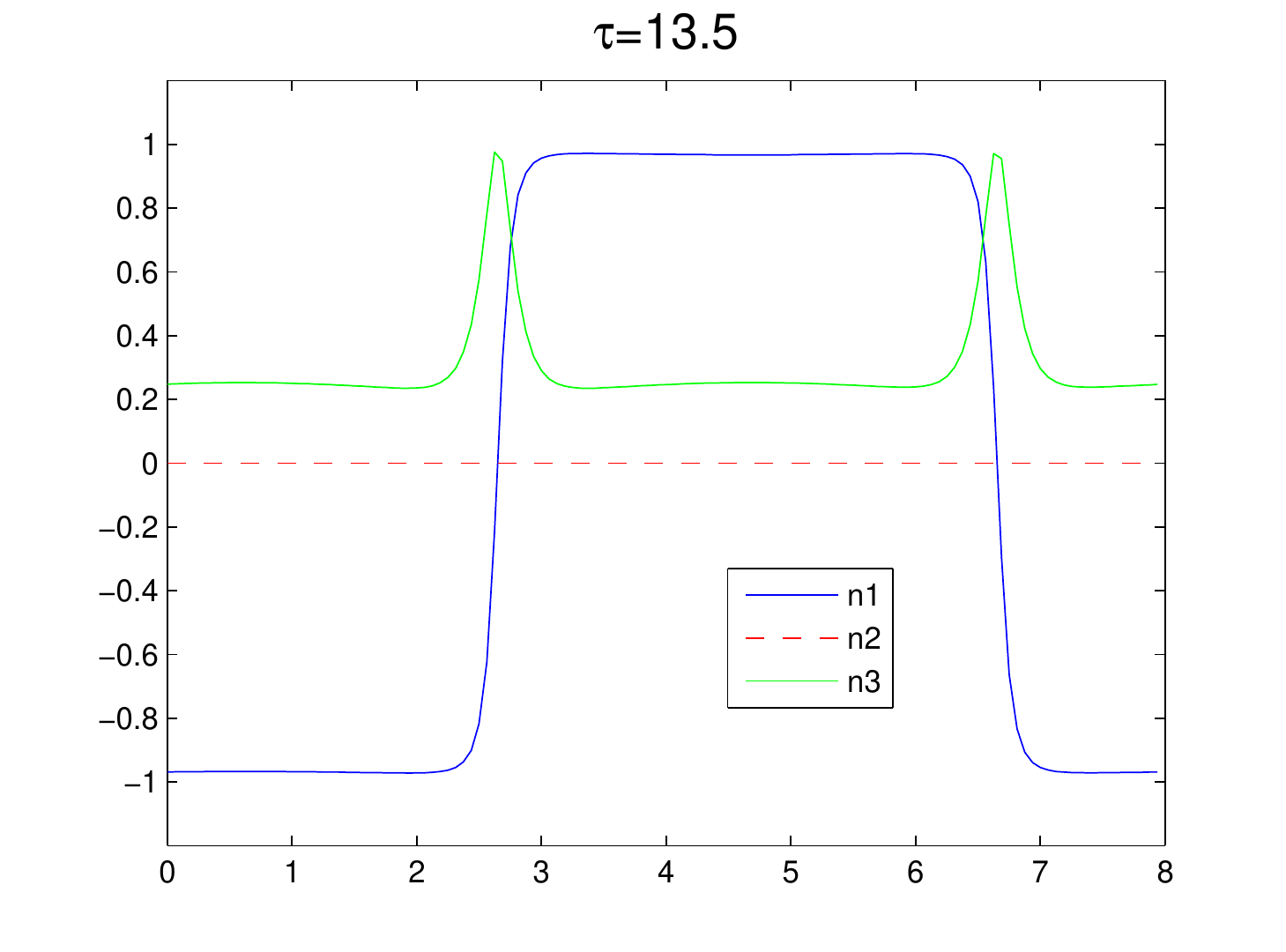,width=0.44\linewidth,clip=} \label{director_h13dot5}&
        \epsfig{file=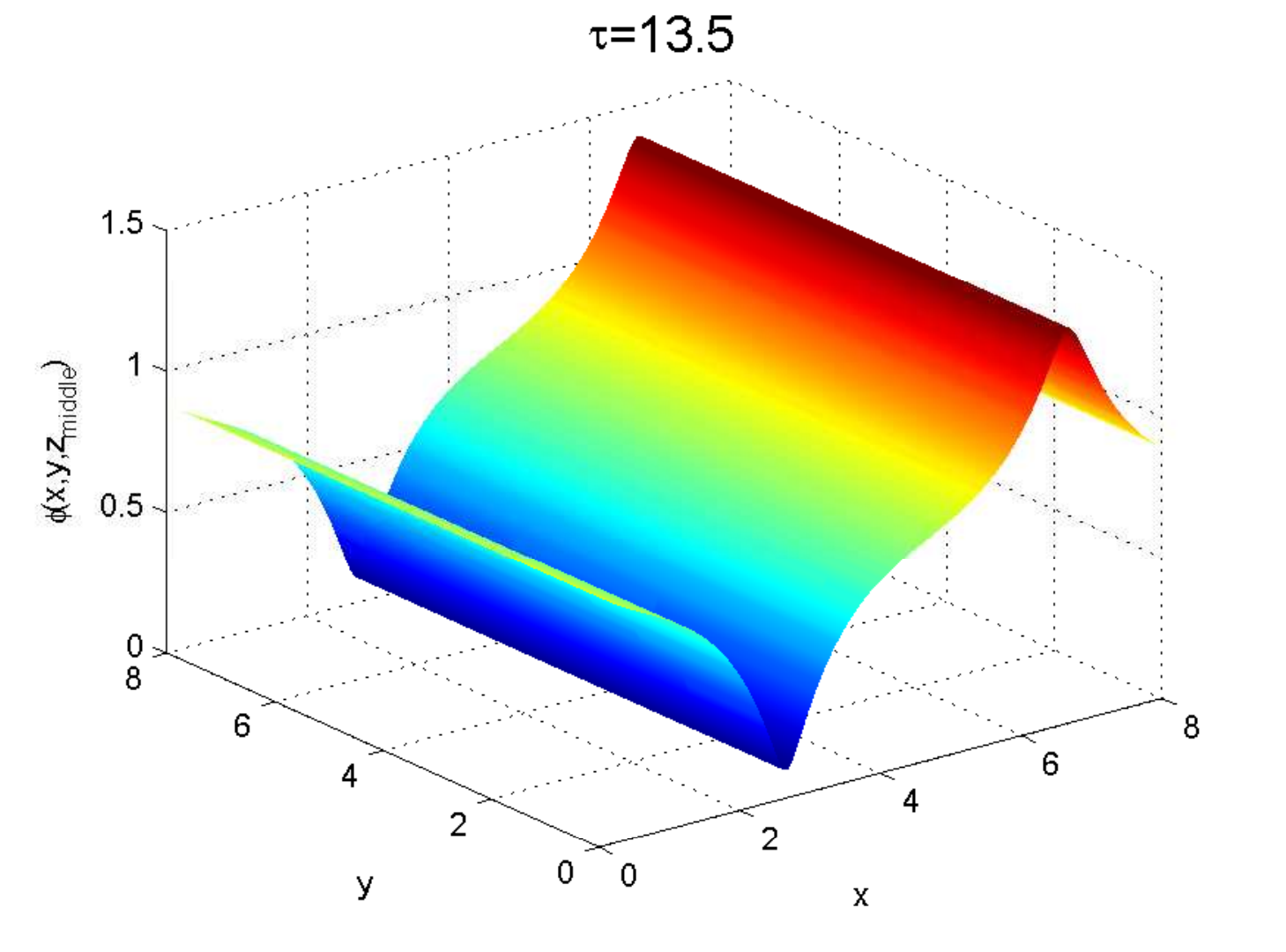,width=0.44\linewidth,clip=} \label{surface_h13dot5}\\
    \end{tabular}
    \caption{Chevron structures: Numerical solution of (\ref{GradientFlow}) with strong anchoring conditions on the bounding plates. The arrangement of rows are same as in Figure~\ref{fig:undulations3D}. The magnetic field strength $\tau = 8, 13, 13.5$ for each column. }
    \label{fig:chevron3D}
\end{figure}

Next, we look at the minimizers of the Chen-Lubensky free energy. The gradient flow equations associated with the energy (\ref{CL}) are given by
\begin{equation}
    \begin{aligned} \label{4th}
        \frac{\partial \bn}{\partial t}  =& -\bn \times \bn \times \Big( \e \Delta \bn - D_1 \e \nb ( \Delta \vp - \nb \cdot \bn) + \frac{D_2}{\e}|\nb \vp - \bn|^2 ( \nb \vp - \bn) \\
                 & \qquad \qquad \qquad \quad + \frac{1}{\e}(\nb \vp - \bn)+ \tau ( \bn \cdot \bh) \bh \Big),\\
        \frac{\partial \vp}{\partial t} =& -D_1 \e \Delta( \Delta \vp - \nb \cdot \bn) +  \frac{2 D_2}{\e} (\pa_j \vp - \bn_j) (\pa_{ij} \vp - \pa_i \bn_j)(\pa_i \vp - \bn_i) \\
        & \qquad \qquad + \frac{1}{\e}(D_2 |\nb \vp - \bn|^2 + 1) ( \Delta \vp - \nb \cdot \bn). \\
    \end{aligned}
\end{equation}
This system of fourth order partial differential equations has been studied in \cite{G-J3} to investigate the layer undulation phenomena. A new numerical  formulation was presented to reduce (\ref{4th}) to a system of second order equations with a constraint, which resembles the Navier-Stokes equations. The Gauge method (\cite{E-navier}) was adapted to solve the resulting equations. Details of the numerical methods can be found in \cite{G-J3}. We consider the rectangular domain $\omm = (-l, l) \times (-1,1)$ where $l = 4$. We employ Dirichlet boundary condition on $\bn = \mathbf{e}_3$ and $\vp = y$ at the $y= \pm 1$ and periodic boundary conditions at $x = \pm l$. The dimensionless parameters used are $D_1 = 0.1, \, D_2 = 0.76,$ and $ \e = 0.2$.

The first instability from the undeformed state ($\vp = y$, $\bn = \mathbf{e}_2$) is observed as layer undulations at $\tau = \pi$ as in the first row of Figure \ref{fig:chevronCL}. The stability analysis of the Chen-Lubensky free energy using $\Gamma$-convergence and bifurcation methods at the critical field is given in \cite{G-J3}.

\begin{figure}
\centering
    \begin{tabular}{cc}
        \epsfig{file=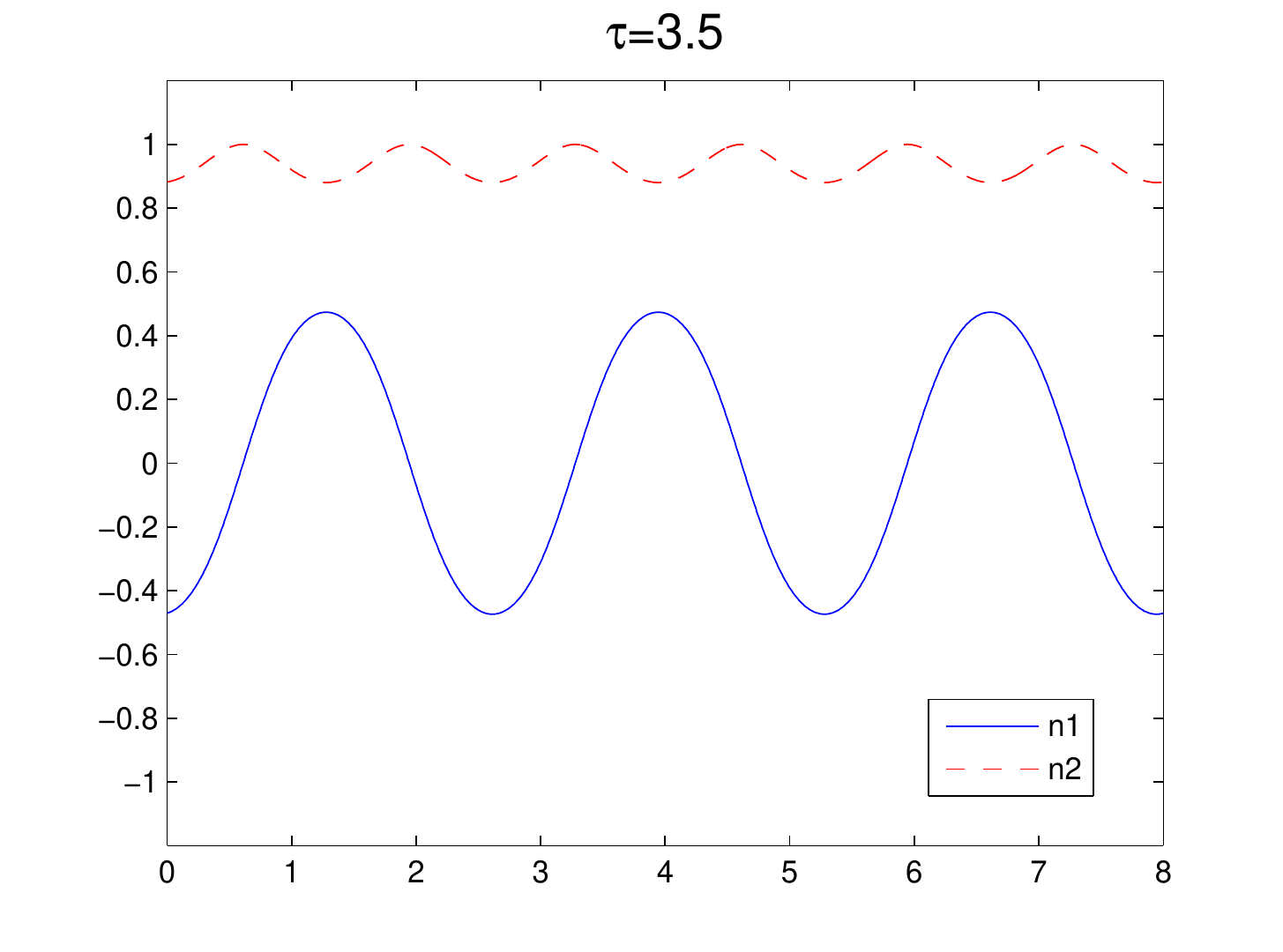,width=0.44\linewidth,clip=} &
       \epsfig{file=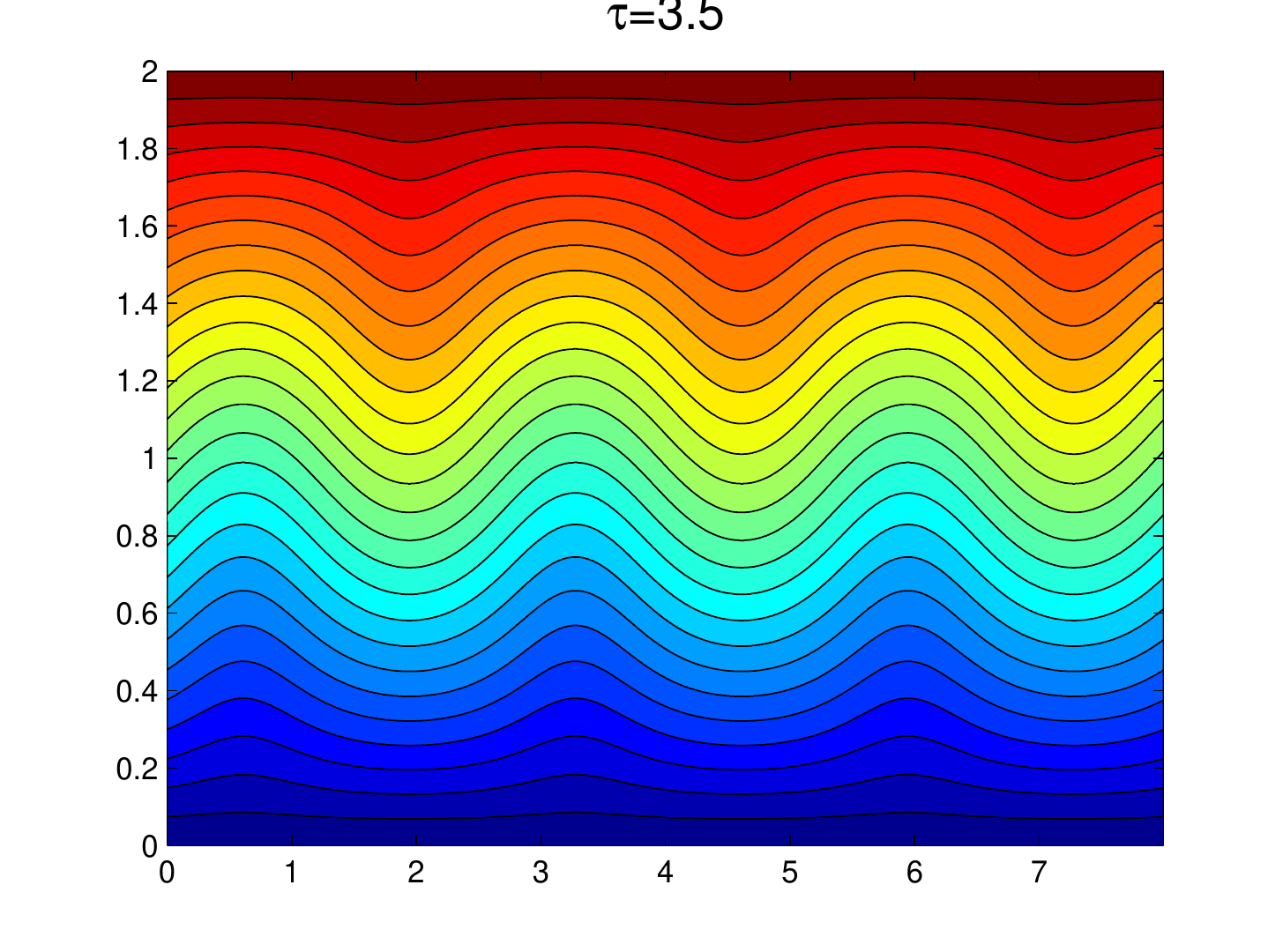,width=0.44\linewidth,clip=} \\
        \epsfig{file=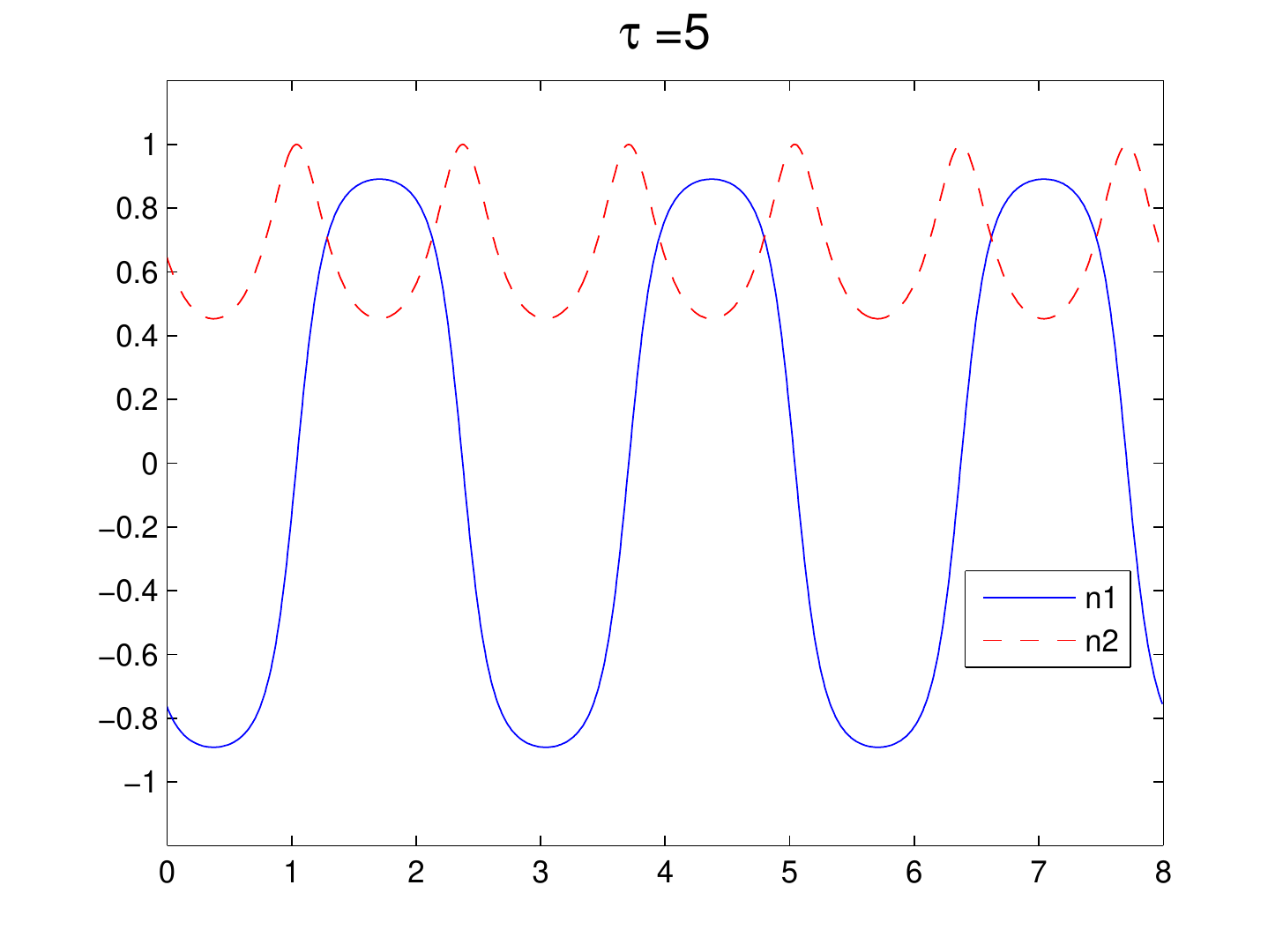,width=0.44\linewidth,clip=} &
        \epsfig{file=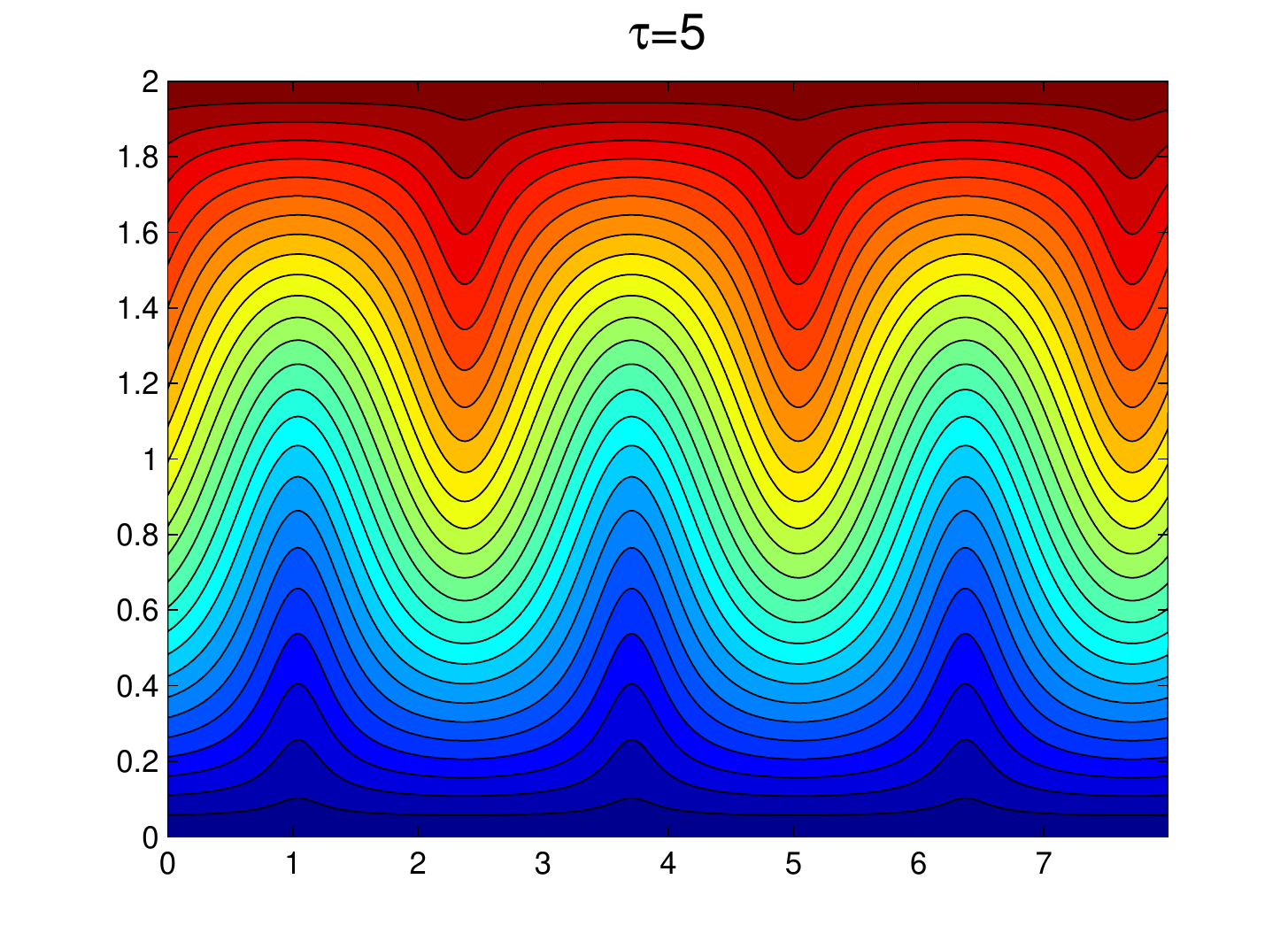,width=0.44\linewidth,clip=}\\
        \epsfig{file=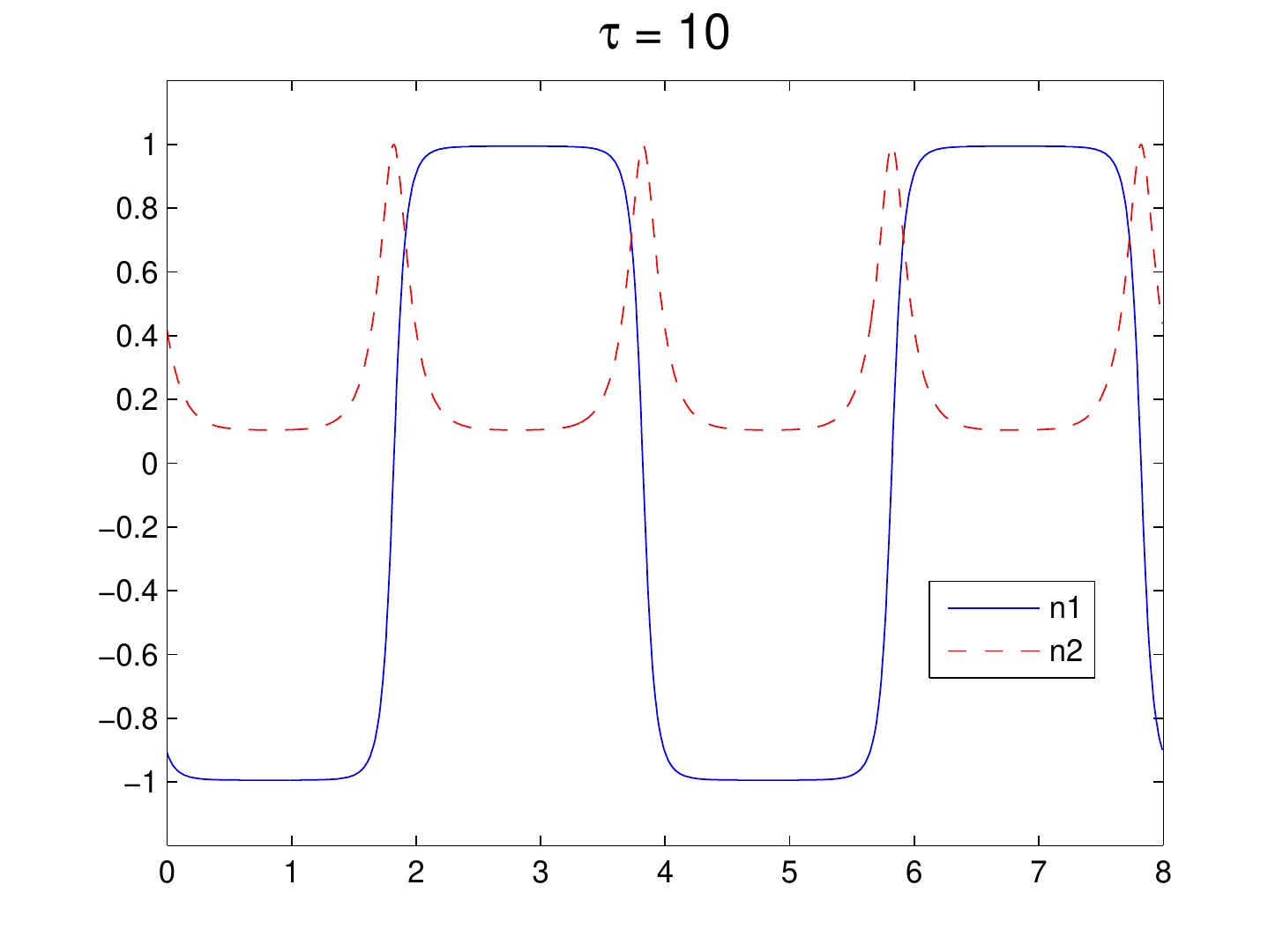,width=0.44\linewidth,clip=}&
        \epsfig{file=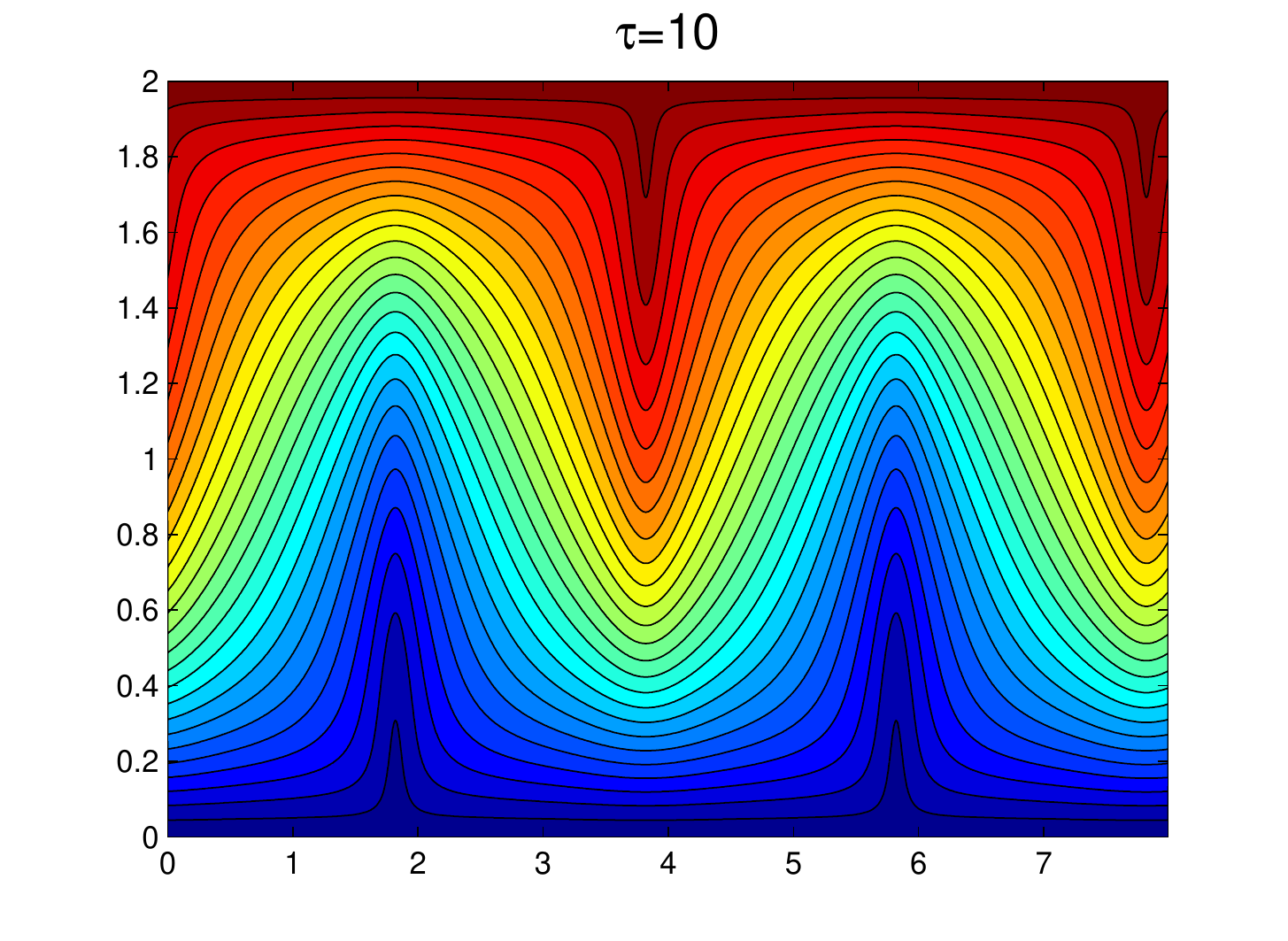,width=0.44\linewidth,clip=} \\
    \end{tabular}
    \caption{Numerical solution of (\ref{4th}) with strong anchoring conditions on the bounding plates. The first and second columns depict the scalar components of directors and layer in the middle of the cell. Onset of undulations in the first row, transformation from periodic oscillations to chevron structure in the second and third rows. }
    \label{fig:chevronCL}
\end{figure}

In the first column of Figure \ref{fig:chevronCL}, we depict the configuration of each component of $\bn$ in the middle of the domain, $y=0$. Also in here, one can clearly see that the undulatory pattern transforms to the zigzag pattern of the director. In the second column we show the layer description given by the contour map of $\vp$. In the middle of the domain the layer profile changes from sinusoidal to saw-tooth shape and its periodicity increases as the field strength increases.

In our analysis we let $\vp(x,y) = y-g(x)$ and find a minimizer of the energy for $g$ and $\bn$. Since the systems (\ref{GradientFlow}) and (\ref{4th}) are gradient flows of the energy in $\vp$ and $\bn$, we also present numerical simulations to find a minimizer of (\ref{CL_S1}) in a simpler setting, when $D_1 = D_2 =0$. We use a Truncated-Newton algorithm for energy minimization with a line search \cite{nocedal}. We use a Fourier spectral discretization in the $x$ direction. In Figure \ref{fig:chevron1D} we illustrate a chevron profile for $\theta$ and $g$, where $\bn = (\sin \theta, 0, \cos \theta)$.
\begin{figure}
\centering
    \begin{tabular}{cc}
        \epsfig{file=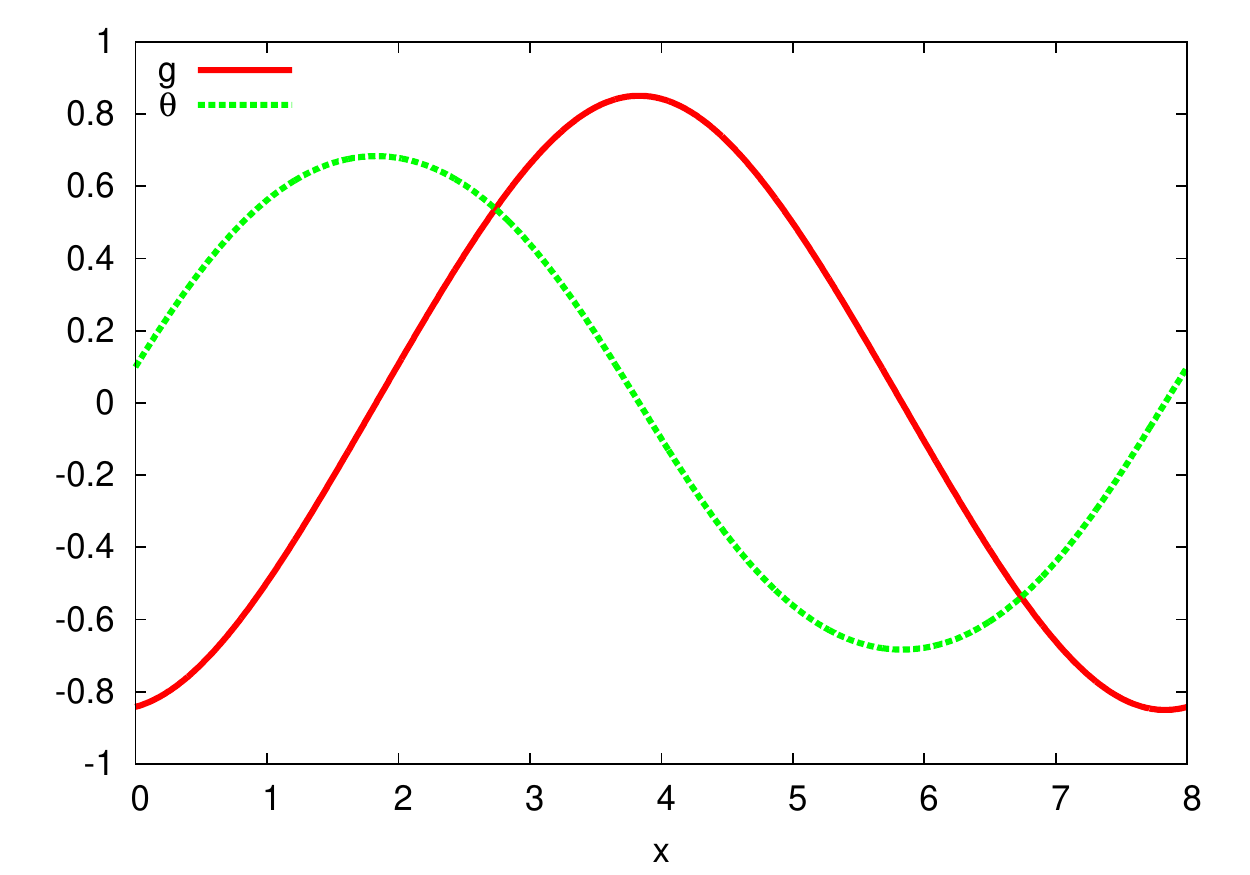,width=0.4\linewidth,clip=} &
        \epsfig{file=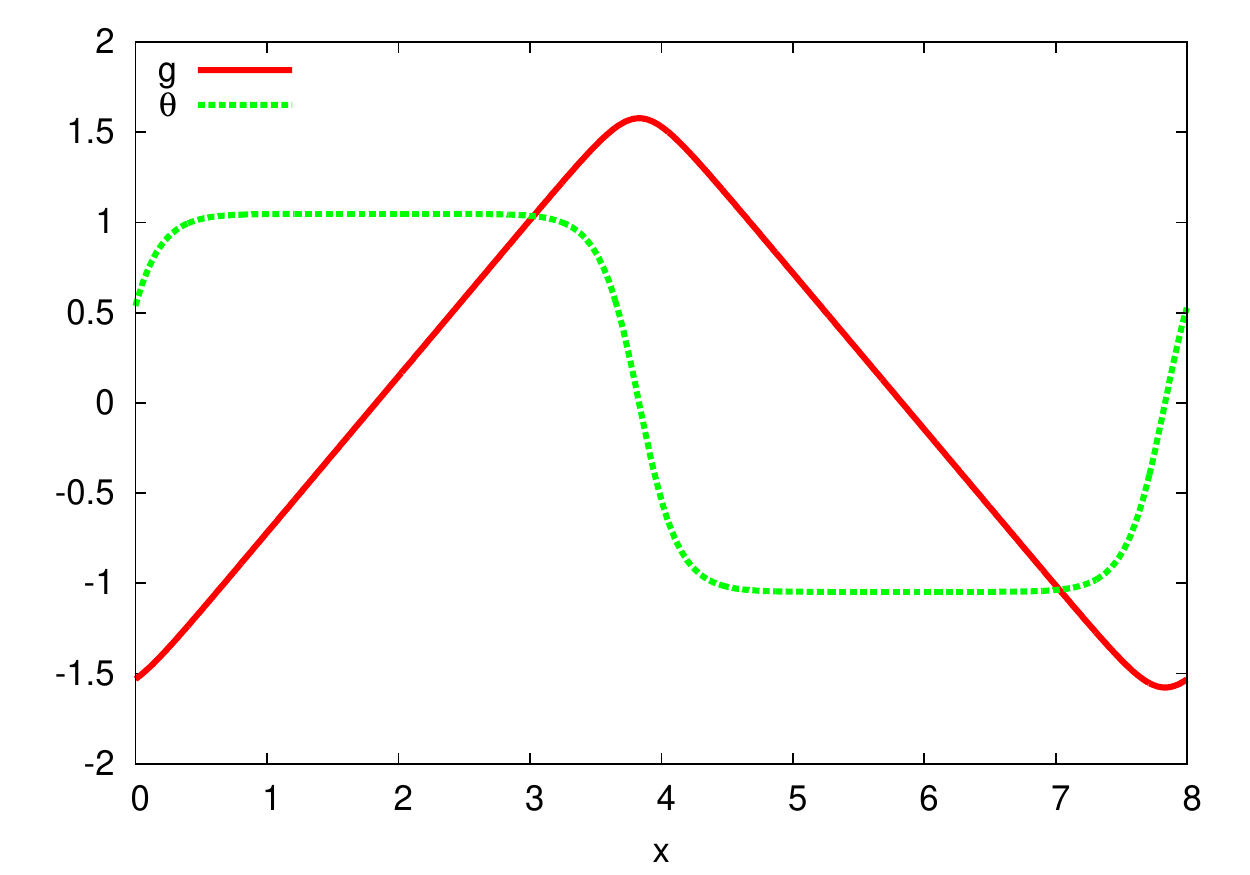,width=0.4\linewidth,clip=}\\{\bf(a)}& {\bf(b)}\\
    \end{tabular}
    \caption{Numerical minimizer of (\ref{CL_S1}) with $D_1 = D_2 = 0, \, A=0.5$ and (a) $\e = 1$ and (b) $\e = 0.2$. }
    \label{fig:chevron1D}
\end{figure}

%
%

\section{Appendix}
\subsection{Geodesics}\label{geodesics}
The construction of the recovering sequence in Theorem~\ref{thm:upper} in section~\ref{section:deGennes} is based on the explicit knowledge of a geodesic connecting the minima of the well potential of the de Gennes energy. Additionally, the value of the constant $c_0$ given in equation  (\ref{c_0}) is provided without a proof  in \cite{Moser2009},  we give below some ideas on how to derive these two facts.

Intuitively, one expects the infimum in $d({\bf n}_-, {\bf n}_+)$ to be achieved for the great arc
connecting ${\bf n}_-$ to ${\bf n}_+$, and  a direct computation gives (\ref{c_0}), by choosing the parametrization
\begin{equation} \label{geodesic-curve}
\gamma_C(t)=(\sin(2\alpha t- \alpha),0,\cos(2\alpha t-\alpha)).
\end{equation}
In fact,  the result is true for  the  distance
\begin{eqnarray}\label{weak-distance}
&& d_w(\xi,\eta)=\inf \bigg \{  \int_\gamma\, \sqrt{W} \, ds;  \mbox{ for } \gamma\in C^0([0,1]) \mbox{ and rectifiable, }  \nonumber \\
&&  \qquad \qquad \, \mbox{ such that } \gamma(t)\in S^2, \mbox{ and } \gamma(0)=\xi, \gamma(1)=\eta   \bigg \},
\end{eqnarray}
with
$
\displaystyle{ \int_\gamma\, \sqrt{W} \, ds = \lim \sum_{j=1}^n \sqrt{W(\gamma(r_j))} \, |\gamma(r_j)-\gamma(r_{j-1})|,}
$
where the limit is as in the definition of the Riemann integral (see \cite{Ahlfors} pages 104-105, for the two-dimensional case). But, from this one can conclude that the same holds for the distance $d$, since the parametrization $\gamma_C$ used in the direct computation mentioned above is $C^1$, any $\gamma\in C^1([0,1])$ is rectifiable, and for  any $\gamma \in C^1$ one has
$$
 \int_{\gamma}\, \sqrt{W} \, ds =  \int_0^1 \, \sqrt{W(\gamma(r))} \,\, |\gamma'(r)| \, dr.
$$

Fundamental in understanding why  the infimum is achieved along a great arc is to note how the function $W$ verifies the interesting property that its value at a generic point of the upper hemisphere having zero $y$-coordinate is smaller than the value at any other point on the sphere of same $x$-coordinate. 
\begin{lemma}\label{l5.1}
Given $P_0=(x_0,0,z_0)\in \mathbb{S}^2$ with $z_0>0$,  for any $P_0^*=(x_0,y_0^*,z_0^*)$  we have $W(P_0)\leq W(P^*_0)$.
\end{lemma}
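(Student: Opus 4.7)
The plan is to reduce the statement to an algebraic inequality by using the explicit form of $W$ given in (\ref{potential}) and the relation $A = (1+\sigma)^{-1}$, which gives the crucial identity $\sigma - 1/A = -1$.

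First, I would parametrize the circle of points on $\mathbb{S}^2$ with fixed first coordinate $x_0$. Setting $r = \sqrt{1-x_0^2}$, we have $z_0 = r$ (since $z_0>0$), and any competitor $P_0^* = (x_0, y_0^*, z_0^*) \in \mathbb{S}^2$ can be written as
\begin{equation*}
y_0^* = r \sin\phi, \qquad z_0^* = r \cos\phi,
\end{equation*}
for some $\phi \in [-\pi,\pi]$, while $P_0$ corresponds to $\phi=0$.

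Next, I would simply compute the difference $W(P_0^*) - W(P_0)$. Since $W(\bn) = \sigma n_2^2 + A^{-1}(n_3 - A)^2$, expanding and collecting like terms yields
\begin{equation*}
W(P_0^*) - W(P_0) = \sigma r^2 \sin^2\phi + \tfrac{1}{A}\bigl(r^2\cos^2\phi - r^2\bigr) + 2r(1 - \cos\phi).
\end{equation*}
Using $\sigma - 1/A = -1$ and $\sin^2\phi = (1-\cos\phi)(1+\cos\phi)$, this factors as
\begin{equation*}
W(P_0^*) - W(P_0) = r(1 - \cos\phi)\bigl[\,2 - r(1+\cos\phi)\,\bigr].
\end{equation*}

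Finally, I would observe that each factor is non-negative: $r \geq 0$, $1 - \cos\phi \geq 0$, and because $r \leq 1$ we have $r(1+\cos\phi) \leq 2$, so the bracket is non-negative as well. This gives $W(P_0) \leq W(P_0^*)$, completing the proof.

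There is no real obstacle; the only mildly delicate point is noticing the algebraic simplification $\sigma - 1/A = -1$, which is what makes the otherwise messy quadratic expression factor cleanly, and using $|r|\le 1$ (i.e.\ the constraint that we are on the unit sphere) to handle the remaining bracket.
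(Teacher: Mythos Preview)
Your proof is correct. The paper's argument is slightly different in presentation: instead of parametrizing the circle $\{n_1=x_0\}$ by an angle and factoring the difference, it first uses the sphere constraint to eliminate $n_2$ entirely, rewriting
\[
W(P)=\sigma(1-x^2)+(z-1)^2-\frac{\sigma}{\sigma+1},
\]
so that for fixed $x$ the potential is simply $(z-1)^2$ plus a constant, hence minimized when $z$ is as large as possible, namely $z=\sqrt{1-x^2}$. Both arguments hinge on the same algebraic identity $\tfrac{1}{A}-\sigma=1$ (your $\sigma-\tfrac{1}{A}=-1$). The paper's route has the advantage that the monotonicity in $z$ is immediate once the rewriting is done, with no further factoring needed; your route has the advantage of producing an explicit nonnegative expression for the gap $W(P_0^*)-W(P_0)$, which could be useful if one ever needed a quantitative version of the inequality.
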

\begin{proof}
By definition of $W$ and $\displaystyle{A=\frac 1{\sigma+1}}$, for a $P=(x,y,z) \in  \mathbb{S}^2$  we have
$$
W(P ) =   \sigma (1-x^2)+(z-1)^2-\frac \sigma{\sigma+1},
$$
and from this, one can see that for $x$ fixed, as $y$ decreases the constraint $P\in \mathbb{S}^2$ implies that $|z|$ increases, hence $W(P)$ decreases in $z$ for  $z>0$. On the other hand, for $P=(x,y,z)$ fixed with $z>0$, and $Q=(x,y,-z)$ clearly $W(P)<W(Q)$, so the lemma follows.
\end{proof}

 Let $\{\gamma_j\}$ be a minimizing sequence for $c_0=d_w({\bf n}_-, {\bf n}_+)$. We claim that the sequence can be chosen so that  the following conditions  are verified by any element $\gamma=(x(t),y(t),z(t)) \in  \{\gamma_j\}$:
\begin{itemize}
\item[(i)] $\gamma$ is such that $z(t) \geq 0$ for all $t$;
\item[(ii)] $\gamma$ is such that $y(t) \geq 0$ for all $t$,
\end{itemize}


Claim (i) follows from remarking that the $z$-coordinates of ${\bf n}_-$ and ${\bf n}_+$ are positive and the fact that for $P=(x,y,z)$ with $z>0$, and $Q=(x,y,-z)$  it holds $W(P)< W(Q)$, as seen  in the proof of Lemma~\ref{l5.1}, and so if we define
\begin{eqnarray*}
\hat{\gamma}(t)= \begin{cases}
	(x(t),y(t),z(t)) \qquad & \mbox{ if } z(t)\geq 0\\
      (x(t),y(t),-z(t)) & \mbox{ if } z(t)< 0,
           \end{cases}
\end{eqnarray*}
we have that $\hat{\gamma}$ is a $C^0([0,1])$ rectifiable  curve of shortest $d_w$-distance.


Claim (ii) is similarly true because the $y$-components of ${\bf n}_-$ and ${\bf n}_+$ are both zero, hence
\begin{eqnarray*}
\hat{\gamma}(t)= \begin{cases}
	(x(t),y(t),z(t)) \qquad & \mbox{if } y(t)\geq 0\\
        (x(t),-y(t),z(t)) \qquad & \mbox{if } y(t)<0,
           \end{cases}
\end{eqnarray*}
defines a  $C^0([0,1])$ rectifiable curve of shortest $d_w$-distance which verifies (ii).

\begin{lemma}\label{l5.2}
Let $\{\gamma_j\}$ denote a minimizing sequence for $d_w({\bf n}_-, {\bf n}_+)$, whose elements verify properties (i) and (ii) above, then for every $j$ it holds
\begin{eqnarray*}
 \int_{\gamma_j}\, \sqrt{W} \, ds \geq  \int_{\gamma_C}\, \sqrt{W} \, ds ,
\end{eqnarray*}
where $\gamma_C(t)=(\sin(2\alpha t- \alpha),0,\cos(2\alpha t-\alpha))$.
\end{lemma}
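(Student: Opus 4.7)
The plan is to combine a geometric projection with a one-dimensional change of variables argument. First, I would project each $\gamma_j$ onto the upper semicircle $C_0 := \{(\sin\phi, 0, \cos\phi) : \phi \in [-\pi/2, \pi/2]\}$ lying in the plane $y = 0$, showing this projection does not increase $\int \sqrt{W}\, ds$. Then, I would argue that any continuous curve on $C_0$ connecting ${\bf n}_-$ to ${\bf n}_+$ has $\int\sqrt{W}\, ds \geq \int_{\gamma_C}\sqrt{W}\, ds$, which amounts to the statement that $\gamma_C$ traverses the relevant arc monotonically with unit multiplicity.

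For the first step, define the projection $\pi(x,y,z) := (x, 0, \sqrt{1-x^2})$, which is well defined on the range of each $\gamma_j$ thanks to properties (i) and (ii), and set $\tilde\gamma_j := \pi \circ \gamma_j$. I claim that, pointwise (a.e. with respect to an arc-length parametrization), $\sqrt{W(\tilde\gamma_j)}\,|\tilde\gamma_j'| \leq \sqrt{W(\gamma_j)}\,|\gamma_j'|$. The inequality on $W$ is exactly Lemma~\ref{l5.1} applied to $P_0 = \tilde\gamma_j(t)$ and $P_0^* = \gamma_j(t)$, since the two points share the same $x$-coordinate. For the norms, differentiating the sphere constraint $x^2 + y^2 + z^2 = 1$ yields $xx' + yy' + zz' = 0$, and Cauchy--Schwarz gives $x^2(x')^2 = (yy'+zz')^2 \leq (1-x^2)(y'^2 + z'^2)$, so $|\gamma_j'|^2 \geq (x')^2/(1-x^2)$. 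A direct computation on $\tilde\gamma_j$ yields $|\tilde\gamma_j'|^2 = (x')^2/(1-x^2)$. Integrating the pointwise inequality gives $\int_{\gamma_j}\sqrt{W}\, ds \geq \int_{\tilde\gamma_j}\sqrt{W}\, ds$.

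For the second step, write $\tilde\gamma_j(t) = (\sin\phi_j(t), 0, \cos\phi_j(t))$ where $\phi_j : [0,1] \to [-\pi/2, \pi/2]$ is continuous with $\phi_j(0) = -\alpha$ and $\phi_j(1) = \alpha$. Then $|\tilde\gamma_j'(t)| = |\phi_j'(t)|$ a.e., and by the area formula
\[
\int_{\tilde\gamma_j}\sqrt{W}\, ds = \int_0^1 \sqrt{W(\sin\phi_j, 0, \cos\phi_j)}\,|\phi_j'|\, dt = \int_{-\pi/2}^{\pi/2}\sqrt{W(\sin\phi, 0, \cos\phi)}\,N_j(\phi)\, d\phi,
\]
with $N_j(\phi) := \#\phi_j^{-1}(\phi)$. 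The Intermediate Value Theorem forces $N_j(\phi) \geq 1$ for every $\phi \in [-\alpha, \alpha]$, and since the integrand is nonnegative,
\[
\int_{\tilde\gamma_j}\sqrt{W}\, ds \geq \int_{-\alpha}^\alpha \sqrt{W(\sin\phi, 0, \cos\phi)}\, d\phi = \int_{\gamma_C}\sqrt{W}\, ds,
\]
the last equality following from the substitution $\phi = 2\alpha t - \alpha$ in the definition of $\gamma_C$. Chaining the two inequalities delivers the claim.

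The main obstacle is regularity: elements of a minimizing sequence for $d_w$ are only continuous and rectifiable, not $C^1$. I would address this by reparametrizing each $\gamma_j$ by arc length, so that $\gamma_j$ becomes Lipschitz and hence differentiable a.e.; properties (i) and (ii) keep the curve away from the points $|x|=1$ where $\pi$ degenerates, so $\tilde\gamma_j$ inherits sufficient regularity for the pointwise estimates and the area formula to apply in their standard absolutely continuous form. Alternatively, smoothing $\gamma_j$ by a mollifier and exploiting continuity of $\int\sqrt{W}\, ds$ under uniform convergence reduces the argument to the $C^1$ case, where all the computations above become classical.
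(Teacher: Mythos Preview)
Your argument is essentially correct and takes a genuinely different route from the paper's. The paper never differentiates: it works directly with the Riemann-sum definition of $\int_\gamma\sqrt{W}\,ds$ for rectifiable curves, building from a fine partition $\{t_j\}$ of $[0,1]$ for $\gamma$ a companion partition $\{s_j\}$ for $\gamma_C$ by matching $x$-coordinates, and then proving the discrete inequalities $\sqrt{W(\gamma_C(s_j))}\leq\sqrt{W(\gamma(t_{k_j}))}$ (from Lemma~\ref{l5.1}) and $|\gamma_C(s_j)-\gamma_C(s_{j-1})|\leq|\gamma(t_{k_j})-\gamma(t_{k_j-1})|$ (a chord-length estimate on $\mathbb{S}^2$). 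Your projection $\pi(x,y,z)=(x,0,\sqrt{1-x^2})$ followed by the area formula is cleaner and more conceptual---it isolates the two mechanisms (Lemma~\ref{l5.1} lowers $W$, Cauchy--Schwarz shortens the tangent) and then reduces to a one-dimensional monotonicity---at the price of needing a.e.\ differentiability. The paper's version buys robustness against low regularity; yours buys transparency.

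There is one slip you should fix: properties (i) and (ii) do \emph{not} keep the curve away from $|x|=1$. They enforce $z\geq 0$ and $y\geq 0$ only; nothing stops $\gamma_j$ from passing through $(\pm 1,0,0)$, and at such points your formula $|\tilde\gamma_j'|^2=(x')^2/(1-x^2)$ is singular. Neither arc-length reparametrization nor mollification cures this particular degeneracy. The gap is not fatal---one option is to add a third reduction, analogous to (i) and (ii), showing a minimizing sequence can be taken with $|x(t)|<1$; another is to note that your Cauchy--Schwarz bound in fact gives $\int_0^1 |x'|/\sqrt{1-x^2}\,dt\leq\int_0^1|\gamma_j'|\,dt<\infty$, so $\phi_j=\arcsin x$ is of bounded variation and the area-formula step goes through in its BV form---but the justification you wrote is incorrect as stated.
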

\begin{proof}
We denote by  $\gamma(t) =(x(t),y(t),z(t))$  a generic element of  $\{\gamma_j\}$,  and, for every $\epsilon>0$, pick $\eta_\epsilon^1>0$ such that for every partition
$P: r_0=0<r_1<...<r_{n-1}<r_n=1$,
with $\max_j|r_j-r_{j-1}|<\eta_\epsilon^1$, we have both
\begin{eqnarray*}
&& \Big|   \int_{\gamma}\, \sqrt{W} \, ds   - \sum_{j=1}^n \sqrt{W(\gamma(r_j))} \, |\gamma(r_j)-\gamma(r_{j-1})| \Big| <\epsilon,
\end{eqnarray*}
and
\begin{eqnarray*}
&& \Big| \int_{\gamma_C}\, \sqrt{W} \, ds   - \sum_{j=1}^n \sqrt{W(\gamma_C(r_j))} \, |\gamma_C(r_j)-\gamma_C(r_{j-1})| \Big| <\epsilon.
\end{eqnarray*}

Additionally,  $\gamma\in C^0([0,1])$ implies that there exists $\eta_\epsilon^2$ for which if $|r-p|<\eta_\epsilon^2$ then $\displaystyle{\Big|x(r)-x(p) \Big|< 2 {\alpha \, A}\, \eta^1_\epsilon}$, recall that $x(t)$ denotes the $x$-component of $\gamma$.

For $\epsilon>0$ fixed, pick $\eta_\epsilon < \min\{\eta^1_\epsilon, \eta^2_\epsilon\}$, and choose a partition $P_\epsilon: t_0=0<t_1<...<t_{n-1}<t_n=1$, with $\max_j |t_j-t_{j-1}|<\eta_\epsilon$. From this partition we build another partition $P^C_\epsilon:s_0<s_1<...<s_m$ as follows. We denote by $x_C$ the $x$-component of $\gamma_C$,
and set
$$
k_f=\inf\Big\{k:  t_k\in P_\epsilon \mbox{ and } x(t_k) \geq \sqrt{1-A^2} \Big\}\leq n,
$$
we then select  $s_0=k_0=0$, and for $j\geq 1$ we pick the first index $k_j$ such that $ x_C(s_{j-1})<x(t_{k_j})$. If $k_j=k_f$ we set $m=j$ and $s_j=s_m=1$, otherwise we continue and pick $s_j$ such that $x_C(s_j)=x(t_{k_j})$.
This process will stop after a finite number of steps $m\leq n$, returning $s_{m}=1$, as well as  $s_{j-1}<s_j$ for every $j$, since $x_C(s)$ is a strictly increasing function for $0<s<1$, and by construction $x_C(s_{j-1})<x_C(s_j)$.

The partition $P^C_\epsilon$ enjoys a few interesting properties, which we describe below. By construction
\begin{equation}\label{newproperty0}
 x(t_{{k_j}-1})\leq x_C(s_{j-1})< \sqrt{1-A^2},
 \end{equation}
 otherwise $t_{{k_j}-1}$ would have been picked at the $j$th step instead of $t_{k_j}$, as well as
 \begin{equation}\label{newproperty00}
 -\sqrt{1-A^2}=x_C(s_0) \leq  x(t_{{k_j}}).
 \end{equation}
And (\ref{newproperty0}), since $|t_{k_j}-t_{{k_j}-1}| < \eta^2_\epsilon$ by the choice of $\eta_\epsilon$, gives
 \begin{equation}\label{newproperty1}
0<x_C(s_j)-x_C(s_{j-1})\leq x(t_{k_j})-x(t_{{k_j}-1})< 2 {\alpha \, A}\,  \,\eta^1_\epsilon.
\end{equation}
In turn, equation (\ref{newproperty1}), with $-\sqrt{1-A^2}\leq x_C(s_j) \leq \sqrt{1-A^2}$ and $x_C(s_j)=\sin(2\alpha \, s_j -\alpha)$ implies
\begin{eqnarray}
&&|s_j-s_{j-1}|=\frac1{2\alpha} \Big|\arcsin(x_C(s_j))-\arcsin(x_C(s_{j-1}))\Big| \nonumber \\
\nonumber\\
&& \qquad \leq \frac 1{2\alpha \, A}\Big|x_C(s_j)-x_C(s_{j-1}) \Big|< \,\eta^1_\epsilon,
\end{eqnarray}
as $\Big|\arcsin(x_C(s_j))-\arcsin(x_C(s_{j-1}))\Big| \leq \frac{1}{\sqrt{1-\xi^2}}\Big|x_C(s_j)-x_C(s_{j-1})\Big|$ for some
$x_C(s_{j-1})\leq \xi \leq x_C(s_{j})$.

Finally,   it is possible to show that
\begin{equation}\label{newproperty2}
|\gamma_C(s_j)-\gamma_C(s_{j-1})|\leq |\gamma(t_{k_j})-\gamma(t_{k_j-1})|.
\end{equation}
To see this, we first notice that  since $0<\alpha<\frac\pi 2$, by picking  $\eta^1_\epsilon$ small enough,  because of (\ref{newproperty0}) and (\ref{newproperty00}), we can assume
$$
-1<- \sqrt{1-A^2} -2\alpha A  \eta^1_\epsilon <  x(t_{{k_j}-1}) < \sqrt{1-A^2},
$$
so that there exists a largest  $s_-$ such that $s_- \leq s_{j-1}$ and $x_C(s_-)= x(t_{{k_j}-1})$, note here $s_-$ could be smaller than 0 but not larger than 1.
But even if $s_-<0$, again by taking  $\eta^1_\epsilon$ sufficiently small, by the continuity of $x_C$, we can assume $0\leq 2 \, \alpha \, (s_j-s_{j-1})\leq 2 \, \alpha \, (s_j -s_-) \leq \pi $, so that
$$
\cos(2 \, \alpha \, (s_j-s_{j-1})) \geq \cos(2 \, \alpha \, (s_j -s_-) ),
$$
from which, since a direct computation for every $p, r$ gives
$$
|\gamma_C(p)-\gamma_C(r)|^2=2-2\cos(2\, \alpha \, (p-r))
$$
we obtain
\begin{equation}\label{newproperty2-step1}
|\gamma_C(s_j)-\gamma_C(s_{j-1})|\leq |\gamma_C(s_j)-\gamma_C(s_{-})|.
\end{equation}
Therefore, to derive (\ref{newproperty2}) it will be enough to prove
\begin{equation}\label{newproperty2-step2}
 |\gamma_C(s_j)-\gamma_C(s_{-})|\leq |\gamma(t_{k_j})-\gamma(t_{k_j-1})|
\end{equation}

We set $\gamma_C(s_j)=(\phi,0,\psi)$, $\gamma_C(s_-)=(\alpha_0,0,\eta)$, $\gamma(t_{k_j})=(\phi,\chi_*,\psi_*)$, and $\gamma(t_{k_j-1})=(\alpha_0,\beta_*,\eta_*)$,  and remind  the reader that these are points on $\mathbb{S}^2$, for which $\chi_*,\psi_*,\beta_*$,$\eta_*$ and $\alpha_0$ are positive, since $\gamma$ satisfies conditions (i) and (ii). Therefore, we can rewrite
\begin{eqnarray*}
&& (\psi-\eta)^2=\psi^2-2 \psi \eta+\eta^2=\chi_*^2+\psi_*^2-2 \psi \eta +\beta_*^2+\eta_*^2 \\
&& \quad = (\chi_*-\beta_*)^2+(\psi_*-\eta_*)^2 -2 \psi \eta +2 \chi_* \beta_* + 2 \psi_* \eta_*,
\end{eqnarray*}
so that
\begin{eqnarray*}
&&|\gamma_C(s_j)-\gamma_C(s_-)|^2= |\gamma(t_{k_j})-\gamma(t_{k_j-1})|^2-2 \psi \eta +2 \chi_* \beta_* + 2 \psi_* \eta_*,
\end{eqnarray*}
and inequality (\ref{newproperty2}) follows by noticing that
\begin{eqnarray*}
&& \psi^2 \, \eta^2  = (\chi_*^2+\psi_*^2) (\beta_*^2+\eta_*^2)\, =\, \chi_*^2 \beta_*^2 +\chi_*^2 \eta_*^2+\psi_*^2\beta_*^2 +\psi_*^2 \eta_*^2\\
&& \qquad = \, \chi_*^2 \beta_*^2 +\psi_*^2 \eta_*^2+ (\chi_* \eta_* - \psi_*\beta_*)^2 + 2 \chi_* \eta_*\psi_*\beta_*\\
&& \qquad =(\chi_*\beta_*+\psi_*\eta_*)^2+(\chi_* \eta_* - \psi_*\beta_*)^2 \geq (\chi_*\beta_*+\psi_*\eta_*)^2.
\end{eqnarray*}
In conclusion,  for every $\epsilon>0$, considering the partitions $P_\epsilon$ and $P^C_\epsilon$ and using Lemma~\ref{l5.1}, we have
\begin{eqnarray*}
&& \int_{0}^{1} \, \sqrt{W(\gamma(t))} \,\, |\gamma'(t)| \, dt  \geq  \sum_{j=1}^n \sqrt{W(\gamma(t_j))} \, |\gamma(t_j)-\gamma(t_{j-1})| - \epsilon \\
&&\quad \geq \sum_{j=1}^m \sqrt{W(\gamma(t_{k_j}))} \, |\gamma(t_{k_j})-\gamma(t_{k_j-1})| - \epsilon \\
&& \quad \geq  \sum_{j=1}^m \sqrt{W(\gamma_C(s_{j}))} \, |\gamma(t_{k_j})-\gamma(t_{k_j-1})| - \epsilon \\
&&\quad  \geq \sum_{j=1}^m \sqrt{W(\gamma_C(s_{j}))} \, |\gamma_C(s_{j})-\gamma_C(s_{j-1})|- \epsilon  \\
&& \quad \geq \int_{0}^{1} \, \sqrt{W(\gamma_C(t))} \,\, |\gamma_C'(t)| \, dt - 2 \epsilon,
\end{eqnarray*}
from which the lemma follows.
\end{proof}

\subsection{Chen-Lubensky double well potential}\label{CL:well}

To analyze the zeros of the function $W$ in (\ref{well-CL}),
$$
W(\theta) = 8D_2\sin^8\frac{\theta}{2} + 4(1+\sigma) \sin^4 \frac{\theta}{2} - 4\sigma\sin^2 \frac{\theta}{2} +a_0,
$$
we set $x=\sin{\theta}/{2}$, and look at the critical points of
$$
f(x) = 8\, D_2 x^8+ 4(1+\sigma) x^4 - 4\sigma x^2.
$$
Taking the derivative, we find
$$
f'(x) = 64 \, D_2 \, x\, \Big(x^6 + \frac 1{4 \, D_2} (\sigma+1) \, x^2 - \frac\sigma{8\, D_2}\Big),
$$
from which, setting $u=x^2$ we are led to the cubic polynomial equation
\begin{equation}\label{depress}
u^3+ \frac 1{4 \, D_2} (\sigma+1) \, u - \frac{\sigma}{8\, D_2}=0.
\end{equation}
Since $\frac 1{4 \, D_2} (\sigma+1)>0$, this equation has only one real root given by the formula
\begin{equation} \label{expression-r}
R(\sigma,D_2)=\sqrt{\frac{\sigma+1}{3 \, D_2}} \, \sinh \Bigg(\frac 13 \arsinh \Bigg(\frac 32\, \frac \sigma{\sigma+1}\,\sqrt{\frac{3 \, D_2}{\sigma+1}}\Bigg) \Bigg).
\end{equation}
Using direct computations, a contradiction argument, and the limit
$$
\lim_{t\to 0} \frac{\sinh \Big(\frac 13 \arsinh \Big(\frac 32\,t\Big) \Big)}{t}  = \frac 12,
$$
it's straightforward to see that
\begin{eqnarray*}
&& R(0,D_2)=0, \, \lim_{\sigma \to \infty} R(\sigma,D_2)=\frac 12,  \\
&&  \lim_{D_2 \to 0^+} R(\sigma,D_2)=\frac{\sigma}{2(\sigma+1)}, \,  0\leq R(\sigma,D_2)< \frac 12,
\end{eqnarray*}
and $f$ is a symmetric even function with a maximum at $x=0$ and minima at $x=\pm \sqrt{R}$.

In terms of $W(\theta)$, picking $a_0=-f(\sqrt{R})$, we have that  for fixed $\sigma$ and $D_2$ positive, $W$ is an symmetric even function of $\theta$, and has only two zeros of opposite sign:
\begin{equation} \label{expression-beta}
\pm \beta(\sigma,D_2)=\pm 2 \arcsin\sqrt{R(\sigma,D_2)},
\end{equation}
which verify
\begin{eqnarray}
&& \beta(0,D_2)=0, \quad \lim_{\sigma \to \infty} \beta(\sigma,D_2)=\frac \pi 2,\nonumber \\
&& \lim_{D_2 \to 0^+} \beta(\sigma,D_2)=\alpha, \quad  0\leq \beta(\sigma,D_2)< \frac \pi 2.
\end{eqnarray}

\bibliographystyle{plain}
\bibliography{chevron_lib}

\end{document}